\documentclass[a4paper,11pt]{article}
\usepackage{amsfonts}
\usepackage{amssymb}
\usepackage{amssymb,amsfonts,amsmath,amsthm,cite,color}
\usepackage{dsfont}
\usepackage{epsfig}
\usepackage{mathrsfs}
\usepackage{algorithmic}
\usepackage{graphicx}
\usepackage{epstopdf}
\usepackage{setspace}
\usepackage{caption}
\usepackage{graphicx, subfig}
\usepackage{appendix}
\usepackage{color}
\usepackage{bm}
\renewcommand{\thefootnote}{}

\parskip=8pt
\hoffset -25truemm
\oddsidemargin=25truemm
\evensidemargin=25truemm
\textwidth=155truemm
\voffset -25truemm
\topmargin=25truemm
\headheight=7truemm
\headsep=0truemm
\textheight=220truemm


\newtheorem{thm}{Theorem}[section]

\newtheorem{lem}[thm]{Lemma}

\newtheorem{defn}[thm]{Definition}

\newtheorem{Claim}{Claim}

\makeatletter \@addtoreset{equation}{section}

\begin{document}
\begin{center}
\begin{spacing}{1.5}
{\Large \bf Packing internally disjoint Steiner paths of data center networks}
\end{spacing}
\end{center}

\begin{center}
{Wen-Han Zhu}$^{a}$,
{Rong-Xia Hao}$^{a,}$
\renewcommand{\thefootnote}{\fnsymbol{footnote}}\footnote{Corresponding author},
{Jou-Ming Chang}$^{b}$,
{Jaeun Lee}$^{c}$

$^{a}$School of Mathematics and Statistics, Beijing Jiaotong University,\\ Beijing 100044, P.R. China\\[10pt]
$^{b}$Institute of Information and Decision Sciences, National Taipei University of Business, Taipei 10051, Taiwan\\[10pt]
$^{c}$Department of Mathematics, Yeungnam University, 280 Daehak-ro, Gyeongsan, Gyeongbuk 38541, Korea
\end{center}
\footnote{E-mail addresses: rxhao@bjtu.edu.cn (R.-X. Hao)}


\vskip 3mm \noindent {\bf Abstract:}
Let $S\subseteq V(G)$ and $\pi_{G}(S)$ denote the maximum number $t$ of edge-disjoint
paths $P_{1},P_{2},\ldots,P_{t}$ in a graph $G$ such that
$V(P_{i})\cap V(P_{j})=S$ for any $i,j\in\{1,2,\ldots,t\}$ and $i\neq j$.
If $S=V(G)$, then $\pi_{G}(S)$ is the maximum number of edge-disjoint spanning paths in $G$.
It is proved [Graphs Combin., 37 (2021) 2521-2533] that deciding whether $\pi_G(S)\geq r$ is NP-complete for a given $S\subseteq V(G)$.
For an integer $r$ with $2\leq r\leq n$, the $r$-path connectivity of a graph $G$
is defined as $\pi_{r}(G)=$min$\{\pi_{G}(S)|S\subseteq V(G)$ and $|S|=r\}$, which is a generalization of tree connectivity.
  In this paper, we study the $3$-path connectivity of the $k$-dimensional data center network with $n$-port switches $D_{k,n}$ which has significate role in  the cloud computing,
and prove that $\pi_{3}(D_{k,n})=\lfloor\frac{2n+3k}{4}\rfloor$ with $k\geq 1$ and $n\geq 6$.

\noindent {\bf Keywords}: data center networks; internally disjoint path; path-connectivity.

\section{Introduction}
Connectivity, one of the most important concepts in graph theory,
clearly describes the propagation mode between vertices.
For a connected graph $G$ with vertex set $V(G)$ and edge set $E(G)$,
the {\it connectivity} $\kappa(G)$ of $G$ was proposed by Whitney \cite{Whitney},
which is defined as $\kappa(G)=$min$\{\kappa_{G}(u,v)|\{u,v\}\subseteq V(G)\}$,
where $\kappa_{G}(u,v)$ is the maximum number of internally disjoint paths connecting $(u,v)$ in $G$.
Dirac \cite{Dirac} proved that there exists a path containing any $r$ vertices in $G$,
where $G$ is $(r-1)$-connected.
As a generalization,
Hager \cite{Hager} revised this problem as that how many internally disjoint paths contain
any $r$ vertices.
Let $S$ be a vertex subset of $G$ with $|S|\geq 2$.
A path $P$ in $G$ is called an {\it Steiner path} or {\it $S$-path}, if $S\subseteq V(P)$.
The $S$-paths $P_{1},P_{2},\ldots,P_{t}$ are said to be {\it internally disjoint $S$-paths},
simply say ID$S$-paths,
if $V(P_{i})\cap V(P_{j})=S$ and $E(P_{i})\cap E(P_{j})=\emptyset$ for any integer $1\leq i\neq j\leq t$.
For an integer $r$ with $2\leq r\leq |V(G)|$,
the {\it $r$-path connectivity}, denoted by $\pi_{r}(G)$,
is defined as $\pi_{r}(G)=$min$\{\pi_{G}(S)|S\subseteq V(G)$ and $|S|=r\}$,
where $\pi_{G}(S)$ denote the maximum number of internally disjoint $S$-paths.
Clearly, $\pi_{1}(G)=\delta(G)$ and $\pi_{2}(G)=\kappa(G)$.
Similarly as $r$-path connectivity,
Hager \cite{Hager1985} and Chartrand et al. \cite{Chartrand} defined
{\it the $r$-tree connectivity} $\kappa_{r}(G)$ by replacing path with tree.
\\
\indent
In recent years, with the widespread application of cloud computing technology,
the development of web search, online gaming, email, cloud storage and infrastructure
services such that data centers can host more and more servers.
To support larger data center networks, researchers have proposed a variety of new network structures.
{\it Switch-centric networks} and {\it server-centric networks} were proposed based on
computational intensive tasks such as routing which are put into the switches or on the servers.
The server-centric data center networks can greatly reduce the cost of network hardware.
Recently, Guo et al.\cite{Guo} proposed a server-centric data center network called DCell,
which can be used to handle a large number of servers.
It initiated an alternative design called server-centric DCNs and inspired a number of novel DCN designs,
such as BCube \cite{GuoLu}, FiConn \cite{Li}, CamCube \cite{Libdeh}, and so on.
DCell has the advantages of exponential scalability, small diameter,
large binary width, high network capacity, and high fault tolerance.
Some basic properties and algorithms of DCell networks have recently been studied,
such as symmetry
and edge symmetry \cite{Gu,Kliegl}, diameter \cite{Kliegl},
connectivity and restricted connectivity \cite{Guo,Manzano,Wang},
broadcasting and fault-tolerant routing \cite{Guo},
vertex-pancyclicity \cite{Hao},
Hamiltonicity and fault-tolerant Hamiltonicity \cite{Qin,X.Wang},
one-to-one disjoint path cover \cite{WangAndFan},
diagnosability analysis \cite{Gu}
and the completely independent spanning trees \cite{Chen,Pai} .
These measurement results show that the $D_{k,n}$ has good communication performance.
\\
\indent
The reader can refer \cite{Ma,LS,Qin2021,Zhao2019Hao} et al. for many results about $\kappa_{r}(G)$,
but there are a few of results about $\pi_{r}(G)$.
For example, $\pi_r{(K_n)}$ and $\pi_r{(K_{s,t})}$ are studied in \cite{Hager}.
The bounds of $3$-path connectivity for the Lexicographic product of graphs
have been characterized in \cite{Mao}.
Moreover, Li et al.\cite{Qin2021} had proved that
deciding whether $\pi_G(S)\geq r$ with $S\subseteq V(G)$ is NP-complete for any fixed integer $r\geq 1$.
The $3$-path connectivity of $k$-ary $n$-cubes and hypercubes had been characterized
in \cite{Zhu2022} and \cite{Zhu2022Hao}, respectively.
\\
\indent
In this paper, we solve the problem of how to pack the maximal number of internally disjoint paths on
$k$-dimensional data center network with $n$-port switches $D_{k,n}$
and prove that $\pi_{3}(D_{k,n})=\lfloor\frac{2n+3k}{4}\rfloor$ with $k\geq 1$ and $n\geq 6$.
In Section $2$, some terminologies
and notations used are introduced.
Moreover, some properties of $D_{k,n}$ and lemmas are given.
In Section $3$,
the results are derived.
In Section $4$, the paper is concluded.

\section{Preliminaries and preparation}
In this paper, we only consider a simple, undirected, connected graph $G=(V(G),E(G))$.
For any vertex $x\in V(G)$, the vertex set $\{y\in V(G): xy\in E(G)\}$
is the {\it neighborhood} of $x$ in $G$, denoted by $N_{G}(x)$.
In addition, $N_{G}[x]=N_{G}(x)\cup \{x\}$.
The {\it degree} of a vertex $x$ in a graph $G$, denoted by $d_{G}(x)$, is the number of edges of
$G$ incident with $x$.
As $G$ is a simple graph, $d_{G}(x)=|N_{G}(x)|$.
A graph is {\it $d$-regular} if $d_{G}(x)=d$ for any vertex $x\in V(G)$.
Let $X\subseteq V(G)$,
then $G[X]$ is referred to as the subgraph of $G$ induced by $X$
whose vertex set is $X$ and whose edge set consists all of edges of $G$
which have both ends in $X$.
Let $P$ be a nontrivial path in $G$,
then it must contain two vertices satisfying that each of them has exactly degree one,
such vertices are called the {\it terminal vertices} of $P$.
Moreover, a vertex with degree two in $P$ is called the {\it internal vertex} of $P$.
A path $P$ in $G$ with $x$ and $y$ as its two terminal vertices is said to be an {\it $(x,y)$-path},
denoted by $P[x,y]$ or $P[y,x]$.
Let $x\in V(G)$ and $Y\subseteq V(G)\backslash\{x\}$.
A path which starts at the vertex $x$, and ends at a vertex of $Y$ is an
{\it $(x,Y)$-path} whose internal vertices do not belong to $Y$.
Let $S=\{x,y,z\}$ be any vertex subset of $G$.
An {\it $S$-path} in $G$ is a path with two vertices of $S$ as terminal vertices and
the remaining vertex of $S$ as internal vertex.
For any two distinct vertices $x$ and $y$ of $V(G)$,
the {\it distance} $d_{G}(x,y)$, abbreviated as $d(x,y)$,
between $x$ and $y$ is the length of a shortest path connecting $x$ and $y$ in $G$.
Let $P_{1},P_{2},\ldots,P_{r}$ be some paths in $G$
and $\mathcal{P}=\{P_{1},P_{2},\ldots,P_{r}\}$,
denoted by $V(\mathcal{P})=\bigcup_{i=1}^{r}V(P_{i})$,
$E(\mathcal{P})=\bigcup_{i=1}^{r}E(P_{i})$
and $d_{\mathcal{P}}(u)=\sum_{i=1}^{r}d_{P_{i}}(u)$ for any $u\in V(\mathcal{P})$.
Let $A,B$ be subgraphs of $G$ and $V(A)\cap V(B)=\emptyset$.
Then let $E[A,B]$ be the set of edges between $A$ and $B$ in $G$.
For more notation and terminology, we refer to \cite{Bondy}, unless otherwise stated.
\\
\indent
Given a positive integer $m$, let $[m]=\{1,2,\ldots,m\}$ and $\langle m\rangle=\{0,1,\ldots,m-1\}$.
For any integer $k\geq 0$ and $n\geq 2$, the {\it $k$-dimensional data center network}
with $n$-port switches is denoted by $D_{k,n}$.
Let $t_{k,n}=|V(D_{k,n})|$ and $t_{i,n}=t_{i-1,n}\times (t_{i-1,n}+1)$ for each $i\in [k]$,
and $t_{0,n}=n$.
The definition of $D_{k,n}$ is as follows.
\begin{defn}\label{Guo}
The $k$-dimensional data center network with $n$-port switches $D_{k,n}$ is a graph with the vertex set
$V(D_{k,n})=\{(a_{k},a_{k-1},\ldots, a_{0}):a_{i}\in\langle t_{i-1,n}+1\rangle, i\in [k]$
and $a_{0}\in \langle n\rangle\}$.
The network $D_{k,n}$ can be defined recursively as follows.
Let $D_{0,n}$ be the complete graph $K_{n}$.
For $k> 0$, $D_{k,n}$ is constructed from $t_{k-1,n}+1$ disjoint copies of $D_{k-1,n}$,
denoted by $D_{k,n}=D_{k-1,n}^{0}\bigoplus D_{k-1,n}^{1}\bigoplus \dots \bigoplus D_{k-1,n}^{t_{k-1,n}}$.
Let $D_{k-1,n}^{i}$ be the $i$-th copy for $i\in \langle t_{i-1,n}+1\rangle$.
Each pair of distinct copies $D_{k-1,n}^{i}$ and $D_{k-1,n}^{j}$ of $D_{k-1,n}$ is connected by the
$k$-dimensional edge, say $xy$, according to the {\bf Connection rule}.
If $xy$ is a $k$-dimensional edge, then $y$ is called the (unique) $k$-dimensional neighbour of $x$.
A vertex $a\in V(D_{k-1,n}^{r})$ is labeled a $(k+1)$-tuple $(r,a_{k-1},\ldots,a_{0})$,
where $a_{i}\in \langle t_{i-1,n}+1\rangle$ for $0< i\leq k-1$ and $a_{0}\in \langle n\rangle$.
The suffix $(a_{j},a_{j-1},\ldots,a_{0})$ has the unique uid$_{j}(a)$, where
uid$_{j}(v)=a_{0}+\sum_{\ell=1}^{j}(a_{\ell}\times t_{\ell-1,n})$.
In $D_{k,n}$, each vertex is uniquely identified by its $(k+1)$-tuple and the $(k+1)$-tuple
can also be derived from its unique uid$_{k}$.
\\
\indent {\bf Connection rule:} For each pair of $D_{k-1,n}$, say $D_{k-1,n}^{r}$ and $D_{k-1,n}^{s}$ ($r< s$),
the vertex $a=(a_{k},a_{k-1},\ldots,a_{0})$ in $D_{k-1,n}^{r}$ is incident with the vertex $b=(b_{k},b_{k-1},\ldots,b_{0})$
in $D_{k-1,n}^{s}$ if and only if $a_{k}=r=uid_{k-1}(b)$
and $b_{k}=s=uid_{k-1}(a)+1$.
\end{defn}

\indent
Several data center networks with small values of $k$ and $n$ are shown in Figure~\ref{DCell}.
\begin{figure}[ht]
\begin{center}
\scalebox{0.5}[0.5]{\includegraphics{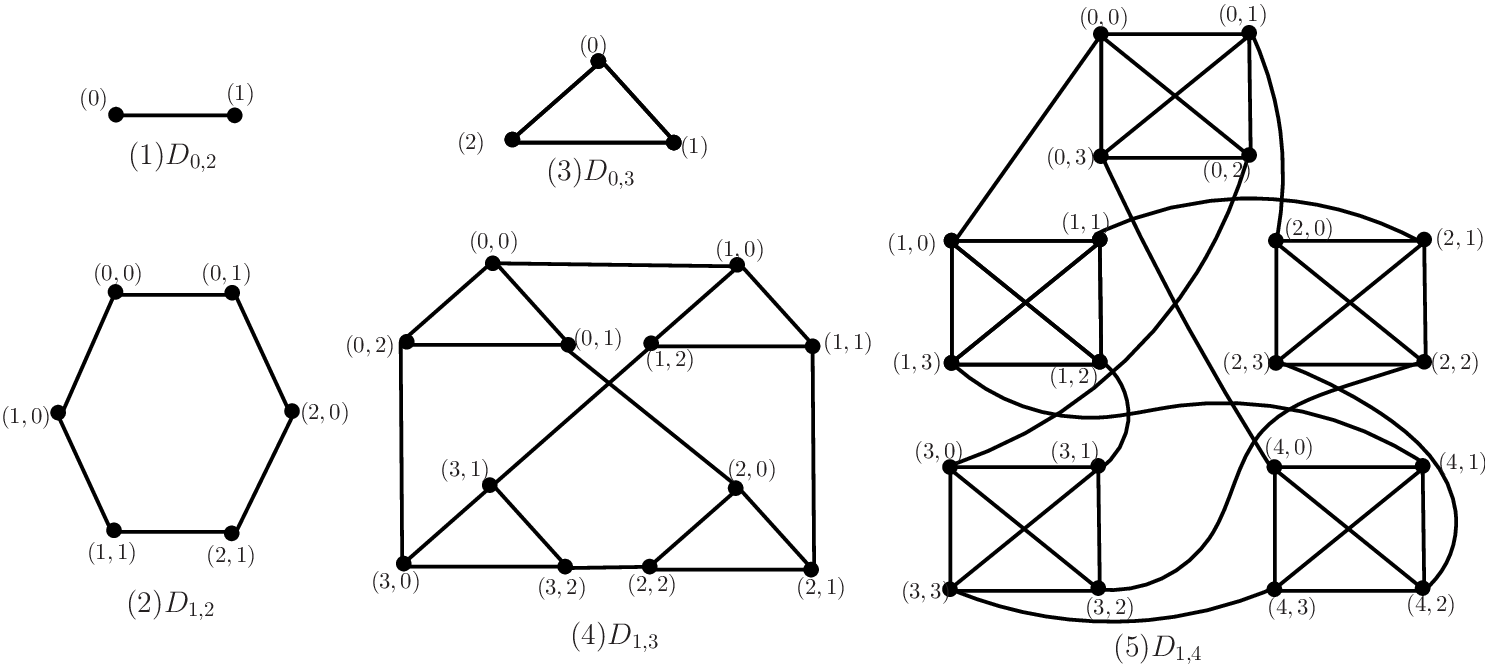}}\\
\captionsetup{font={small}}
\caption{Several DCells}\label{DCell}
\end{center}
\end{figure}
\\
\indent
According to the definition of $D_{k,n}$, Lemma \ref{lem1} is established.

\begin{lem}\label{lem1}
Let $D_{k,n}$ be the $k$-dimensional data center network with $n$-port
switches for $k\geq 0$ and $n\geq 2$.
Then the following four conditions hold.
\\
\indent
\emph{(1)} $D_{k,n}$ is $(n+k-1)$-regular and $\kappa(D_{k,n})=\lambda(D_{k,n})=n+k-1$.
\\
\indent
\emph{(2)} For $k\geq 1$, $D_{k,n}$ consists of $t_{k-1,n}+1$ copies of $D_{k-1,n}$,
denoted by $D_{k-1,n}^{r}$ for each $r\in \langle t_{k-1,n}+1\rangle$.
For each vertex $a\in D_{k-1,n}^{r}$ with $a=(r,a_{k-1},\ldots,a_{0})$,
where $a_{i}\in \langle t_{i-1,n}+1\rangle$ for $0\leq i\leq k-1$,
the vertex $a$ has only one $k$-dimensional neighbour, say $a^{\prime}$.
\\
\indent
\emph{(3)}
Let $a$ and $b$ be any two distinct vertices in $D_{k,n}$.
If $a,b\in V(D_{k-1,n}^{r})$,
then the $k$-dimensional neighbours
$a^{\prime}$ and $b^{\prime}$ of $a$ and $b$, respectively, belong to two distinct copies of $D_{k-1,n}$.
Moreover, there exists a unique $k$-dimensional edge between any two distinct copies of $D_{k-1,n}$.
\\
\indent
\emph{(4)} For any three distinct vertices $a,b$ and $c$ in $D_{k,n}$,
then $|N_{D_{k,n}}(a)\cap
N_{D_{k,n}}(b)\cap N_{D_{k,n}}(c)|\leq n-3$ with $n\geq 4$.
Moreover, $|N_{D_{k,n}}(a)\cap N_{D_{k,n}}(b)\cap N_{D_{k,n}}(c)|= n-3$
if and only if $\{a,b,c\}\subseteq V(K_{n})$, where $K_{n}$ as a subgraph of $D_{k,n}$ is a complete graph.
\end{lem}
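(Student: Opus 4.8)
The plan is to derive everything from the recursive definition of $D_{k,n}$ and the \textbf{Connection rule}, running an induction on $k$. Parts \emph{(1)}--\emph{(3)} are essentially label bookkeeping, while part \emph{(4)} carries the real content and is where I expect the difficulty to concentrate.

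For \emph{(1)} I would prove $(n+k-1)$-regularity by induction on $k$: $D_{0,n}=K_n$ is $(n-1)$-regular, and in passing from $D_{k-1,n}$ to $D_{k,n}$ each vertex keeps its $n+k-2$ neighbours inside its own copy and gains exactly one $k$-dimensional neighbour by \emph{(2)}, for total degree $n+k-1$. Since $\kappa(G)\le\lambda(G)\le\delta(G)$ for every graph, it remains to show $\kappa(D_{k,n})\ge n+k-1$; here I would either cite the connectivity computation from \cite{Guo,Manzano,Wang}, or run the standard fault-set induction: given $F\subseteq V(D_{k,n})$ with $|F|\le n+k-2$, at most one copy $D_{k-1,n}^{r}$ can contain $\ge n+k-2$ faults, every other copy stays connected by the inductive $(n+k-2)$-connectivity of $D_{k-1,n}$, and because any two copies are joined by an edge (part \emph{(3)}) while only few copies can be heavily damaged, the surviving parts link into one connected graph.

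For \emph{(2)} and \emph{(3)} I would compute directly with the connection rule. Fix $a=(r,a_{k-1},\dots,a_0)$ and set $u=\mathrm{uid}_{k-1}(a)$. The rule forces the $k$-dimensional mate of $a$ to lie in copy $u+1$ when $u\ge r$ (so $a$ is the smaller-indexed endpoint) and in copy $u$ when $u<r$ (so $a$ is the larger-indexed endpoint); in each case the suffix of the mate is pinned down by $\mathrm{uid}_{k-1}=r$ respectively $r-1$, so the mate $a'$ exists and is unique, which is \emph{(2)}. For \emph{(3)}, writing $f(x)=x+1$ for $x\ge r$ and $f(x)=x$ for $x<r$, the copy index of $a'$ equals $f(\mathrm{uid}_{k-1}(a))$; since $f$ is strictly increasing (it jumps over the value $r$) and distinct vertices of one copy have distinct $\mathrm{uid}_{k-1}$, the mates $a'$ and $b'$ land in different copies. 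The ``unique edge between two copies'' claim is the same computation read backwards: for copies $r<s$ the only incident pair has $\mathrm{uid}_{k-1}=s-1$ in $D^{r}$ and $\mathrm{uid}_{k-1}=r$ in $D^{s}$, each determining a single vertex.

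The heart of the lemma is \emph{(4)}, which I would prove by induction on $k$, splitting on how $a,b,c$ are distributed among the copies $D_{k-1,n}^{0},\dots,D_{k-1,n}^{t_{k-1,n}}$. If all three lie in one copy, the inductive bound gives $\le n-3$ common neighbours inside that copy, with equality iff they share a $K_n$, and there is no outside common neighbour because $a',b',c'$ sit in three different copies by \emph{(3)}, so one external vertex cannot be the $k$-dimensional mate of two of them. If exactly two lie in a common copy, the uniqueness of the edge between any two copies forces at most one common neighbour; if all three lie in distinct copies, the same uniqueness rules out any common neighbour. Tracking the base case $D_{0,n}=K_n$, where the three vertices trivially share the $K_n$ and have exactly $n-3$ common neighbours, then yields both the bound and the equality characterisation. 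The main obstacle is precisely this step: the inductive argument requires the uniqueness statement of \emph{(3)} at every level, and the equality clause is delicate, since the ``two in one copy'' case can itself produce a single common neighbour; this borderline is harmless for the range $n\ge 6$ used later, but it is the one place where the clean ``if and only if'' really needs $n-3\ge 2$, i.e. $n\ge 5$, to separate that case from genuine equality.
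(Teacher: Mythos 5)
Your proposal is correct, but there is essentially nothing in the paper to compare it against: the authors assert Lemma~\ref{lem1} with the single remark that it follows from the definition of $D_{k,n}$, and the connectivity value $\kappa(D_{k,n})=\lambda(D_{k,n})=n+k-1$ is imported from the cited DCell literature rather than proved. Your argument supplies the missing proof along the natural lines. The label computation in (2) and (3) is exactly what the Connection rule gives: the mate of the vertex with $\mathrm{uid}_{k-1}=u$ in copy $r$ lies in copy $u+1$ if $u\geq r$ and in copy $u$ if $u<r$, with its suffix forced by $\mathrm{uid}_{k-1}=r$ or $r-1$; the resulting map is injective with image $\langle t_{k-1,n}+1\rangle\setminus\{r\}$, which yields both the distinct-copies claim and the unique-edge claim of (3) in one stroke. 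Your fault-set induction for (1) goes through because each faulty vertex kills at most one cross edge, and the quotient ``copy graph'' is a complete graph on $t_{k-1,n}+1$ vertices, far more than $n+k-1$, so it survives the loss of at most $n+k-2$ edges; a heavily damaged copy is rescued by the intact mates of its surviving vertices. For (4), the three-way split by distribution over copies, combined with the key fact that every vertex has exactly one neighbour outside its own copy, gives at most $n-3$, at most $1$, and exactly $0$ common neighbours in the three cases respectively, and the equality transfer in the one-copy case is a clean induction.

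Your closing remark is not merely a caveat but a genuine (minor) correction to the lemma as stated. For $n=4$ the ``only if'' half of the equality claim in (4) fails: take $c$ in one copy, let its mate $c'$ lie in copy $D_{k-1,n}^{r}$, and let $a,b$ be two neighbours of $c'$ inside that copy; then $c'$ is a common neighbour of $a,b,c$, so $|N(a)\cap N(b)\cap N(c)|=1=n-3$, yet $\{a,b,c\}$ lies in no $K_{4}$, since the uniqueness of the edge between two copies prevents any $K_{n}$ with $n\geq 3$ from meeting two copies. As you observe, equality forces the one-copy case only when $n-3\geq 2$, i.e. $n\geq 5$; this is harmless for the paper, which invokes the lemma only for $n\geq 6$, but your proof is the more precise statement.
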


The following Lemmas \ref{lem6}-\ref{thm21} are important for studying the path connectivity.

\begin{lem}\label{lem6}\cite{Bondy}
Let $G$ be a $k$-connected graph, $x$ and $y$ be
a pair of distinct vertices in $G$. Then there exist $k$ internally disjoint
paths $P_{1},P_{2},\ldots,P_{k}$ in $G$ connecting $x$ and $y$.
\end{lem}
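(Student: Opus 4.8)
The plan is to derive this global statement from the classical (local) vertex form of Menger's theorem, which asserts that for two \emph{non-adjacent} vertices $u,v$ the maximum number of internally disjoint $(u,v)$-paths equals the minimum size of a vertex set separating $u$ from $v$. Granting this, the only genuine work is to convert the hypothesis $\kappa(G)\geq k$ into the required lower bound on separators and to cover the case where $x$ and $y$ are adjacent, since then no vertex deletion can separate them and the local theorem does not apply directly. I would therefore split into two cases.

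If $x$ and $y$ are non-adjacent, let $S\subseteq V(G)\setminus\{x,y\}$ be any set separating $x$ from $y$. As $G$ is $k$-connected, deleting fewer than $k$ vertices leaves $G$ connected, so $|S|\geq k$; hence the minimum $(x,y)$-separator has size at least $k$. The local Menger theorem then yields $k$ internally disjoint $(x,y)$-paths $P_{1},\ldots,P_{k}$, as desired.

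If $xy\in E(G)$, set $H=G-xy$, so that $x$ and $y$ are non-adjacent in $H$. I would first record the standard fact that deleting one edge lowers connectivity by at most one, giving $\kappa(H)\geq\kappa(G)-1\geq k-1$: indeed, any separator $T$ of $H$ together with one endpoint of the deleted edge separates $G$, forcing $|T|+1\geq\kappa(G)$. Applying the non-adjacent case to $H$ produces $k-1$ internally disjoint $(x,y)$-paths, none of which uses the edge $xy$; adjoining the length-one path formed by $xy$ itself, which has no internal vertices, gives $k$ internally disjoint $(x,y)$-paths in $G$.

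The main obstacle is not conceptual but lies in making the adjacent case airtight: verifying the inequality $\kappa(G-e)\geq\kappa(G)-1$ with full care (including the degenerate possibility that an endpoint of $e$ becomes an isolated component) and checking small boundary configurations such as $G=K_{k+1}$, where $|V(G)|=k+1$ and one must confirm that $H$ still supplies $k-1$ paths. Once these routine points are settled, the lemma follows immediately from the local Menger theorem with no further case analysis.
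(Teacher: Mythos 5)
Your proof is correct, but note that the paper does not prove this lemma at all: it is quoted verbatim from Bondy and Murty \cite{Bondy} as the global (Whitney) form of Menger's theorem, and your argument is exactly the classical textbook derivation that the citation points to --- local Menger for non-adjacent $x,y$, and for adjacent $x,y$ the reduction to $H=G-xy$ via $\kappa(G-e)\geq\kappa(G)-1$ plus the edge $xy$ as the $k$-th path. The one point you flag as delicate is indeed the only place needing care, and it closes cleanly: if $T$ is a cut of $H$ and some endpoint, say $x$, forms an isolated component of $H-T$, then $N_{G}(x)\subseteq T\cup\{y\}$, so $|T|\geq d_{G}(x)-1\geq\delta(G)-1\geq\kappa(G)-1$, which gives the same bound as the case where $T\cup\{x\}$ separates $G$; with that observation (and the trivial check that $H$ is not complete, since $x$ and $y$ are non-adjacent in $H$) the argument is airtight, including for $G=K_{k+1}$.
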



\begin{lem}\label{lem4}(Fan Lemma\cite{Bondy})
Let $G=(V,E)$ be a $k$-connected graph,
$x$ be a vertex of $G$, and $Y\subseteq V\backslash\{x\}$
be a set of at least $k$ vertices of $G$.
Then there exists a $k$-fan in $G$ from $x$ to $Y$
(where, a family of $k$ internally disjoint $(x,Y)$-paths whose terminal vertices are distinct
is referred to as a $k$-fan from $x$ to $Y$).
\end{lem}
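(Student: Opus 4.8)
The plan is to reduce the Fan Lemma to the global connectivity version already recorded as Lemma~\ref{lem6}, by augmenting $G$ with a single auxiliary vertex that ``collects'' the set $Y$. Concretely, I would form a new graph $G'$ from $G$ by adding one new vertex $w\notin V(G)$ and joining $w$ to every vertex of $Y$ (and to nothing else). An $(x,Y)$-path in $G$ corresponds, after appending the edge to $w$, to an $x$--$w$ path in $G'$; conversely a family of internally disjoint $x$--$w$ paths in $G'$ is exactly the structure from which a $k$-fan can be extracted. So the whole argument hinges on producing $k$ internally disjoint $x$--$w$ paths in $G'$.

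First I would verify that $G'$ is $k$-connected, so that Lemma~\ref{lem6} applies to the pair $x,w$. Since $G$ being $k$-connected forces $|V(G)|\ge k+1$, the order condition $|V(G')|\ge k+1$ is immediate. For the cut condition, suppose $S\subseteq V(G')$ separates $G'$ with $|S|<k$. If $w\in S$, then $G'-S=G-(S\setminus\{w\})$ with $|S\setminus\{w\}|<k$, which is connected because $G$ is $k$-connected. If $w\notin S$, then $|S|<k\le|Y|$ leaves some neighbour of $w$ in $Y\setminus S$, so $w$ lies in the same component of $G'-S$ as the (connected) graph $G-S$. Either way $G'-S$ is connected, a contradiction, so $\kappa(G')\ge k$.

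Next I would apply Lemma~\ref{lem6} to $x$ and $w$ in $G'$ to obtain $k$ internally disjoint paths $P_{1},\dots,P_{k}$, meeting pairwise only in $\{x,w\}$. Because $w$ is adjacent solely to $Y$, traversing $P_{i}$ from $x$ I let $z_{i}$ be the \emph{first} vertex of $Y$ encountered and let $Q_{i}$ be the initial segment $P_{i}[x,z_{i}]$. Each $Q_{i}$ lies entirely in $G$, ends in $Y$, and by the choice of $z_{i}$ has all internal vertices outside $Y$, so it is a genuine $(x,Y)$-path in the sense of the definition. The truncation is exactly what upgrades ``ends in $Y$'' to ``internal vertices avoid $Y$''.

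Finally I would check disjointness and distinctness. Since $V(P_{i})\cap V(P_{j})=\{x,w\}$ and $w\notin V(Q_{i})$, we get $V(Q_{i})\cap V(Q_{j})=\{x\}$, so the $Q_{i}$ are internally disjoint; and as $z_{i}\in V(Q_{i})\setminus\{x\}$, the same intersection bound forces $z_{i}\neq z_{j}$ for $i\neq j$, giving distinct terminals in $Y$. Thus $\{Q_{1},\dots,Q_{k}\}$ is a $k$-fan from $x$ to $Y$. I expect the only delicate points to be the $w\notin S$ sub-case of the connectivity argument, where one must invoke $|Y|\ge k$ to keep a neighbour of $w$ alive, together with the truncation step, which is the part that actually enforces the ``internal vertices not in $Y$'' requirement rather than merely ensuring the paths terminate in $Y$.
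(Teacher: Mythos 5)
Your proof is correct. Note that the paper itself gives no proof of this statement: it is quoted as the Fan Lemma and cited directly to Bondy and Murty \cite{Bondy}, so there is no internal argument to compare against. Your derivation --- adjoin an auxiliary vertex $w$ joined to all of $Y$, verify $\kappa(G')\geq k$ by the two-case cut analysis, apply Lemma~\ref{lem6} (Menger) to the pair $x,w$, and truncate each path at its first vertex of $Y$ --- is exactly the standard textbook proof of this lemma, and every step checks out: the case split on whether $w$ lies in the alleged cut is handled correctly (using $|Y|\geq k$ when $w\notin S$), and the truncation step correctly converts ``paths ending at $w$'' into $(x,Y)$-paths with internal vertices outside $Y$ and pairwise distinct terminals.
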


\begin{lem}\label{lem2}\cite{Zhu2022}
Let $G$ be a $k$-regular connected graph.
Then $\pi_{3}(G)\leq \lfloor\frac{3k-r}{4}\rfloor$,
where $r=$max$\{|N_{G}(x)\cap N_{G}(y)\cap N_{G}(z)|:\{x,y,z\}\subseteq V(G)\}$.
\end{lem}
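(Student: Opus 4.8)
This is immediate from the two structural lemmas. By Lemma~\ref{lem1}(1) the graph $D_{k,n}$ is $(n+k-1)$-regular, and by Lemma~\ref{lem1}(4) the largest common neighbourhood of three distinct vertices has size $r=n-3$ (for $n\geq 4$). Feeding these into Lemma~\ref{lem2} yields $\pi_3(D_{k,n})\leq\lfloor\frac{3(n+k-1)-(n-3)}{4}\rfloor=\lfloor\frac{2n+3k}{4}\rfloor$. Hence the entire content lies in the matching lower bound: for \emph{every} $S=\{x,y,z\}\subseteq V(D_{k,n})$ one must construct $N:=\lfloor\frac{2n+3k}{4}\rfloor$ ID$S$-paths.

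I will prove the lower bound by induction on $k$. For the base case $k=0$ we have $D_{0,n}=K_{n}$, and I would show $\pi_3(K_n)=\lfloor n/2\rfloor=\lfloor\frac{2n}{4}\rfloor$ directly: take the path $x$-$z$-$y$, pair up the $n-3$ vertices outside $S$ into paths of the form $x$-$w_{i}$-$z$-$w_{j}$-$y$, and, when $n$ is even, add one further path $z$-$w$-$x$-$y$ that absorbs the leftover outside vertex together with the edge $xy$. A short degree count shows these are edge-disjoint, meet only in $S$, and number exactly $\lfloor n/2\rfloor$.

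For the inductive step write $D_{k,n}=D_{k-1,n}^{0}\oplus\cdots\oplus D_{k-1,n}^{t_{k-1,n}}$ and split on the location of $S$ among the copies: (A) $x,y,z$ in one copy $D_{k-1,n}^{r}$; (B) two in one copy and one in another; (C) all three in distinct copies. In Case (A) I apply the inductive hypothesis inside $D_{k-1,n}^{r}$ to obtain $\lfloor\frac{2n+3(k-1)}{4}\rfloor$ internal ID$S$-paths; since $N-\lfloor\frac{2n+3(k-1)}{4}\rfloor\in\{0,1\}$, it remains only to append at most one \emph{external} $S$-path, routed along the $k$-dimensional edges $xx',yy',zz'$ (which reach three distinct copies by Lemma~\ref{lem1}(3)) and the unique edges joining those copies, steering inside each copy by the Fan Lemma (Lemma~\ref{lem4}) and Lemma~\ref{lem6}. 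In Cases (B) and (C) the vertices already have their external edges leading into different copies, so I build all $N$ paths directly: the unique $k$-dimensional edge between each relevant pair of copies (Lemma~\ref{lem1}(3)), together with the $(n+k-2)$-connectivity of every copy (Lemma~\ref{lem1}(1)), furnishes enough internally disjoint routes, which are stitched into $S$-paths by attaching fans at $x$, $y$ and $z$.

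The main obstacle is the tight edge-accounting in Case (A). The internal paths supplied by induction can exhaust the internal degrees of $x,y,z$, whereas any external $S$-path forces its internal $S$-vertex to spend two edges; I therefore expect to strengthen the inductive hypothesis so that it always delivers paths leaving at least two of $x,y,z$ with a free internal edge, and to verify that this reservation is preserved along the induction. The remaining work is combinatorial bookkeeping: because $\lfloor\frac{2n+3k}{4}\rfloor$ increases by $0$ or $1$ with $k$, the number of external paths must be tracked across the four residue classes of $k\bmod 4$, and the hypothesis $n\geq 6$ is precisely what guarantees that each copy retains enough spare vertices to route the external paths internally disjointly from the internal ones.
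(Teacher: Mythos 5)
You have proved the wrong statement. Lemma~\ref{lem2} is a general claim about an \emph{arbitrary} $k$-regular connected graph $G$, namely $\pi_{3}(G)\leq\lfloor\frac{3k-r}{4}\rfloor$ with $r$ the maximum size of a common neighbourhood of three vertices. Your proposal never argues this inequality: your first paragraph \emph{invokes} Lemma~\ref{lem2} to derive the upper bound for $D_{k,n}$ — which is circular when the lemma itself is the target — and everything that follows is a sketch of the lower bound $\pi_{3}(D_{k,n})\geq\lfloor\frac{2n+3k}{4}\rfloor$, i.e.\ the content of Theorem~\ref{thm3} (which the paper proves by induction via Claims~1--3), not of Lemma~\ref{lem2}. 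The lemma has nothing to do with DCell structure, induction on $k$, the Fan Lemma, or fans and copies; it holds for every $k$-regular connected graph, and the paper accordingly does not reprove it but cites \cite{Zhu2022}.

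The missing argument is a short degree count, which your text contains no trace of. Choose $S=\{x,y,z\}$ attaining the maximum $r$, and let $P_{1},\ldots,P_{t}$ be internally disjoint $S$-paths. By the definition of an $S$-path for $|S|=3$, each $P_{i}$ has two vertices of $S$ as terminals (degree $1$ in $P_{i}$) and the third as an internal vertex (degree $2$), so $d_{P_{i}}(x)+d_{P_{i}}(y)+d_{P_{i}}(z)=4$ for every $i$, and edge-disjointness gives $4t=\sum_{i=1}^{t}\bigl(d_{P_{i}}(x)+d_{P_{i}}(y)+d_{P_{i}}(z)\bigr)\leq d_{G}(x)+d_{G}(y)+d_{G}(z)=3k$. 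To gain the $-r$ term, observe that any $w\in N_{G}(x)\cap N_{G}(y)\cap N_{G}(z)$ satisfies $w\notin S$ and, since $V(P_{i})\cap V(P_{j})=S$, lies on at most one path $P_{i}$, in which it has degree at most $2$; hence at least one of the three edges $wx,wy,wz$ is used by no path, and these unused edges are pairwise distinct over the $r$ common neighbours. This sharpens the count to $4t\leq 3k-r$, whence $\pi_{3}(G)\leq\pi_{G}(S)\leq\lfloor\frac{3k-r}{4}\rfloor$. Your application of the lemma to $D_{k,n}$ (with $k$-regularity replaced by $(n+k-1)$-regularity and $r=n-3$ from Lemma~\ref{lem1}(4)) is correct as far as it goes, but it is a use of the lemma, not a proof of it.
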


\begin{lem}\label{thm1}\cite{Hager}
Let $K_{n}$ be the complete graph with order $n$ and $k$ be an integer with $n\geq k\geq 2$.
Then
$\pi_{k}(K_{n})=\lfloor \frac{2n+k^{2}-3k}{2(k-1)}\rfloor.$
\end{lem}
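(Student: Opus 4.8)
The plan is to prove the two inequalities $\pi_k(K_n)\le\lfloor\frac{2n+k^2-3k}{2(k-1)}\rfloor$ and $\pi_k(K_n)\ge\lfloor\frac{2n+k^2-3k}{2(k-1)}\rfloor$ separately. Since $K_n$ is vertex-transitive, every $k$-subset behaves the same way, so I may fix one set $S$ with $|S|=k$ and compute $\pi_{K_n}(S)$; write $R=V(K_n)\setminus S$ for the $n-k$ remaining (free) vertices. The whole argument rests on two hard capacity limits: across any family of ID$S$-paths the edges lying inside $S$ are pairwise distinct, so at most $\binom{k}{2}$ of them are ever used; and each vertex of $R$ lies on at most one path, so at most $n-k$ free vertices are used in total.

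For the upper bound, suppose $P_1,\dots,P_t$ are ID$S$-paths and let $g_i$ be the number of edges of $P_i$ joining two vertices of $S$. Reading $P_i$ from one end to the other, the $k$ vertices of $S$ appear in some order and determine $k-1$ consecutive $S$-pairs; each such pair is either joined by an $S$-$S$ edge or separated by at least one free vertex, and those free vertices are private to $P_i$. Hence $P_i$ contains at least $(k-1)-g_i$ free vertices. Summing over $i$, using $\sum_i g_i\le\binom{k}{2}$ together with the fact that at most $n-k$ free vertices are available, gives $t(k-1)-\binom{k}{2}\le\sum_i\bigl((k-1)-g_i\bigr)\le n-k$, that is, $t(k-1)\le(n-k)+\binom{k}{2}=\frac{2n+k^2-3k}{2}$. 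As $t$ is an integer, $t\le\lfloor\frac{2n+k^2-3k}{2(k-1)}\rfloor$.

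For the lower bound I would exhibit $t=\lfloor\frac{2n+k^2-3k}{2(k-1)}\rfloor$ ID$S$-paths matching the bound. The idea is to spend the two budgets --- the $\binom{k}{2}$ edges inside $S$ and the $n-k$ free vertices --- as efficiently as the counting allows, i.e.\ to build paths each of which meets exactly $(k-1)-g_i$ free vertices when it uses $g_i$ edges inside $S$. Concretely, I would decompose a suitable part of the edge set of the clique on $S$ into linear forests $L_1,\dots,L_t$, one per path: a forest $L_i$ with $g_i$ edges splits $S$ into $k-g_i$ subpaths (isolated vertices counting as trivial subpaths), and I would splice these subpaths into a single $(s,s')$-path by inserting one fresh free vertex between consecutive pieces, using exactly $(k-1)-g_i$ vertices of $R$. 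A Hamiltonian-path decomposition of the clique on $S$ supplies the edge-rich paths (those with $g_i=k-1$ and no free vertex), while the remaining paths are routed mostly through $R$; by construction distinct paths share only $S$ and no edge, so they are ID$S$-paths.

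The main obstacle will be making the extremal allocation simultaneously consistent. Three things must hold at once: the chosen linear forests must partition the available $S$-$S$ edges so that $\sum_i g_i$ is as large as possible; the total free-vertex demand $\sum_i\bigl((k-1)-g_i\bigr)$ must not exceed $n-k$ while still reaching the target $t$ (this is where the floor, and the residue of $\binom{k}{2}$ relative to $k-1$, forces a few paths to carry one extra free vertex or one fewer $S$-edge); and each $s\in S$ must respect its degree cap $n-1$, equivalently $s$ must serve as a path-endpoint in at least $2t-(n-1)$ of the paths. I would discharge the last requirement by distributing the $2t$ endpoint slots among the $k$ vertices of $S$ so that each receives at least $2t-(n-1)$; this is feasible precisely because $k\bigl(2t-(n-1)\bigr)\le 2t$, i.e.\ $2t(k-1)\le k(n-1)$, which follows from the upper bound already proved together with $\frac{2n+k^2-3k}{2(k-1)}\le\frac{k(n-1)}{2(k-1)}$ (the latter being equivalent to $(k-2)(n-k)\ge 0$). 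Carefully organizing the forest decomposition and this endpoint balancing, supported by a short case analysis on $2n+k^2-3k$ modulo $2(k-1)$, should complete the construction and hence the claimed equality.
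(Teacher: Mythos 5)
There is no in-paper proof to compare against: the paper imports this statement verbatim from Hager's 1986 article \cite{Hager} (Lemma~\ref{thm1} carries only a citation), so your attempt can only be judged on its own merits. Your upper bound is complete and correct: the free vertices of each $S$-path are private to it by the condition $V(P_i)\cap V(P_j)=S$, each path needs at least $(k-1)-g_i$ of them, the $S$--$S$ edges are globally distinct by edge-disjointness, and summing gives $t(k-1)\leq (n-k)+\binom{k}{2}=\frac{2n+k^{2}-3k}{2}$, with integrality supplying the floor. This is the same budget-counting mechanism that underlies the paper's Lemma~\ref{lem2} (the $\lfloor\frac{3k-r}{4}\rfloor$ bound used throughout Section~3), so the counting side of your argument is fully in the spirit of the surrounding paper, and your feasibility check for the endpoint constraint, $(k-2)(n-k)\geq 0$, is verified correctly.

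The lower bound, however, is a blueprint rather than a proof. Everything after ``Concretely'' identifies the right objects --- prescribe $g_1,\ldots,g_t$ with $\sum_i g_i\geq t(k-1)-(n-k)$, realize them as edge-disjoint linear forests in the clique on $S$, splice each forest's $k-g_i$ pieces with $(k-1)-g_i$ fresh vertices of $R$, and balance the $2t$ endpoint slots --- but you never exhibit the forests or run the promised residue analysis; the claim that the three constraints can be met simultaneously is precisely the content of the equality and is only asserted (``should complete the construction''). To close it you would invoke the Hamiltonian-path decomposition of $K_k$ for even $k$ (each vertex an endpoint exactly once, which also discharges the endpoint quota) and, for odd $k$, the Hamiltonian-cycle decomposition with one edge deleted per cycle, peeling linear forests of the required sizes, together with a short case split on $2n+k^{2}-3k$ modulo $2(k-1)$. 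Note that for the one instance the paper actually uses, $k=3$ in Case~1 of Theorem~\ref{thm2}, your scheme instantiates exactly to the paper's explicit family $xyz$, $xzu_{1}y$, $xu_{i}zu_{i+1}y$, so the gap is immaterial for the paper's application but real for the general statement as you have written it.
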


\begin{lem}\label{thm21}
Let $G$ be a $k$-regular connected graph with $\pi_{3}(G)\geq 2$,
$x,y$ and $z$ be any three distinct vertices of $G$ with
$|N_{G}(x)\cap N_{G}(y)\cap N_{G}(z)|=r$ and $3k-4\pi_{3}(G)\geq r+\ell$.
Then (a) and (b) hold.
\\
\indent
(a)
There exist a vertex $u$ and a set $\mathcal{T}$ of $\pi_{3}(G)$
internally disjoint $\{x,y,z\}$-paths in $G$ such that
$u\in (N_{G}(x)\cup N_{G}(y)\cup N_{G}(z))\backslash V(\mathcal{T})$ for $\ell=1$.
\\
\indent
(b)
There exist two distinct vertices $u,v$ and a set $\mathcal{T}$ of $\pi_{3}(G)$
internally disjoint $\{x,y,z\}$-paths in $G$ such that
$u,v\in (N_{G}(x)\cup N_{G}(y)\cup N_{G}(z))\backslash V(\mathcal{T})$ for $\ell=3$.
\end{lem}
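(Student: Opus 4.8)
The plan is to prove (a) and (b) uniformly by showing that one can always choose the $\pi_3(G)$ paths so that at least $\lceil \ell/2\rceil$ neighbours of $\{x,y,z\}$ are avoided; this yields one spare neighbour $u$ when $\ell=1$ and two spare neighbours $u,v$ when $\ell=3$. Since $\pi_G(\{x,y,z\})\ge \pi_3(G)$ by the definition of $\pi_3$, a set of $\pi_3(G)$ internally disjoint $\{x,y,z\}$-paths exists; among all such sets I would fix one, call it $\mathcal{T}$, that first maximises the number of neighbours of $\{x,y,z\}$ lying outside $V(\mathcal{T})$ and, subject to that, minimises the total number of edges $|E(\mathcal{T})|$. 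Throughout put $W=(N_{G}(x)\cup N_{G}(y)\cup N_{G}(z))\setminus\{x,y,z\}$, let $c_{w}\in\{1,2,3\}$ count the edges from $w\in W$ to $\{x,y,z\}$ (so $c_{w}=3$ precisely for the $r$ common neighbours), and let $m=|W\setminus V(\mathcal{T})|$ be the number of avoided neighbours.

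The routine step is a degree identity at $\{x,y,z\}$. Each $\{x,y,z\}$-path uses two of $x,y,z$ as endpoints and the third as an interior vertex, hence contributes $1+1+2=4$ to $d_{\mathcal{T}}(x)+d_{\mathcal{T}}(y)+d_{\mathcal{T}}(z)$; summing over the $\pi_3(G)$ paths, the used degree is $4\pi_3(G)$, so the spare degree equals $3k-4\pi_3(G)\ge r+\ell$. I would then sort the spare half-edges at $x,y,z$ by their other endpoint into three groups: those landing on an \emph{avoided} neighbour (total $\sum_{w\notin V(\mathcal{T})}c_{w}$), those landing on a \emph{used} neighbour, and those joining two vertices of $\{x,y,z\}$.

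The crux is to bound the last two groups using the extremal choice. If $sw$ is a spare edge with $s\in\{x,y,z\}$ and $w\in V(\mathcal{T})$, then $s$ and $w$ lie on a common path $P_{j}$ and $sw$ is a chord; were no vertex of $\{x,y,z\}$ strictly between them on $P_{j}$, replacing that subpath by $sw$ would shorten $\mathcal{T}$ and release interior vertices, contradicting the minimality and maximality of our choice. So every such chord is blocked by one of $x,y,z$. Pushing this rigidity, I would show that after the optimisation each \emph{used} common neighbour contributes exactly one blocked edge and nothing else is wasted; that is, the spare degree not reaching avoided neighbours is at most the number $r-r'$ of used common neighbours, where $r'$ counts the \emph{avoided} common neighbours. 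Hence
\[
\sum_{w\notin V(\mathcal{T})}c_{w}\ \ge\ (r+\ell)-(r-r')\ =\ \ell+r'.
\]
Since avoided common neighbours contribute $3$ and all other avoided neighbours at most $2$, one has $\sum_{w\notin V(\mathcal{T})}c_{w}\le 2m+r'$, and combining the two inequalities gives $m\ge\lceil \ell/2\rceil$. Taking $u$ (and, for $\ell=3$, also $v$) among the avoided neighbours proves (a) and (b).

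I expect the blocked-chord bound to be the main obstacle, because a common neighbour sitting in the interior of a path can a priori absorb up to three spare edges, all three chords being simultaneously blocked by $x,y,z$. The real work is to show that the extremal choice forbids such interior common neighbours and, more generally, forces every used neighbour to be fully attached to $\{x,y,z\}$ along its path; this needs a short local-surgery argument (sandwiching an interior common neighbour between two of $x,y,z$, or exploiting an unused edge inside $\{x,y,z\}$), together with a finite check of the degenerate configurations — length-two paths and unused edges within $\{x,y,z\}$ — in which the clean accounting above must be adjusted by a bounded amount. The small admissible values $\ell\in\{1,3\}$ are exactly calibrated so that these bounded corrections never consume the margin, so the generic counting carries the proof and only a brief case analysis remains.
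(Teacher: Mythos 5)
Your counting framework is fine as far as it goes: the identity that each $\{x,y,z\}$-path contributes $4$ to $d_{\mathcal T}(x)+d_{\mathcal T}(y)+d_{\mathcal T}(z)$, the upper bound $\sum_{w\notin V(\mathcal T)}c_w\le 2m+r'$, and the reduction of (a), (b) to $m\ge\lceil \ell/2\rceil$ are all correct. But the step you yourself flag as ``the real work'' is a genuine gap, not a routine local-surgery check, and the claim it rests on cannot be forced by your optimisation. Concretely: let $P\in\mathcal T$ have endpoints $a,c$ and interior vertex $b$, where $\{a,b,c\}=\{x,y,z\}$, let $w$ be the path-neighbour of $b$ on the $c$-side of $P$, and suppose $wa\in E(G)\setminus E(\mathcal T)$ while $wc\notin E(G)$. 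Then $w$ is a used \emph{non-common} neighbour absorbing a spare edge. Your lexicographic choice cannot remove this configuration: the chord $wa$ is blocked by $b$, and every rerouting of $P$ through $wa$ that still visits all of $a,b,c$ (for instance $c\cdots w\,a\cdots b$) has exactly the same vertex set and the same number of edges as $P$, so it neither increases the number of avoided neighbours nor decreases $|E(\mathcal T)|$. Moreover the number of such absorbers is not controlled by $r$: each path can carry up to two of them (the two path-neighbours of its interior vertex). Hence your key inequality --- that the spare degree not reaching avoided neighbours is at most $r-r'$ --- does not follow, and with it the bound $\sum_{w\notin V(\mathcal T)}c_w\ge\ell+r'$ and the conclusion $m\ge\lceil\ell/2\rceil$ collapse.

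The paper's own proof shows why this cannot be repaired locally. It argues by contradiction: it assumes that \emph{every} endpoint of a spare edge lies on $V(\mathcal T)$, and when it meets precisely your problematic configuration (its Case 1.2.2, where the spare edge $x\mu_1$ ends at $\mu_1=y_{12}$, the path-neighbour of the interior vertex $y$ on the $z$-side), no surgery on that single path frees any vertex. Instead it uses $k$-regularity to manufacture a second spare edge $z\nu$ at $z$, invokes the global hypothesis $B\subseteq V(\mathcal T)$ to place $\nu$ on some path, and then performs a surgery involving a second path. In other words, this configuration is only defeated under the assumption that no avoided vertex exists at all; it is not shown to be absent from optimal families, which is exactly what your pointwise counting would need. (Note also that the paper obtains (b) without any quantitative count: having found $u$ by part (a), it deletes the at most three edges between $u$ and $\{x,y,z\}$, reruns the entire argument on the reduced spare-edge set, and extracts a second vertex $v\neq u$.) To salvage your approach you would have to replace the bound ``$\le r-r'$'' by a contradiction argument of the paper's type, at which point you would have reproduced the paper's proof rather than shortened it.
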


The proof of the Lemma \ref{thm21} is shown in Appendix.


\section{The $3$-path-connectivity of data center networks}

\begin{lem}\label{lem5}
Let $D_{k,n}=D_{k-1,n}^{0}\bigoplus D_{k-1,n}^{1}\bigoplus \dots \bigoplus D_{k-1,n}^{t_{k-1,n}}$
be the $k$-dimensional data center network with $n$-port switches for $n\geq 3$,
and $H=D_{k,n}[\bigcup_{q=i_{1}}^{i_{t}}V(D_{k-1,n}^{q})]$ for $\{i_{1},i_{2},\ldots,i_{t}\}\subseteq\langle t_{k-1,n}+1\rangle,
k\geq 1$ and $t\geq 3$.
Then $\kappa (H)\geq 2$.
\end{lem}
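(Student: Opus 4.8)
The plan is to show that $H$ is $2$-connected by verifying that $H$ is connected and that $H-v$ remains connected for every vertex $v\in V(H)$. The guiding picture is the quotient obtained by contracting each copy $D_{k-1,n}^{i_j}$ to a single vertex: by Lemma~\ref{lem1}(3) there is exactly one $k$-dimensional edge between any two of the chosen copies, so this quotient is the complete graph $K_t$. Since $t\geq 3$, $K_t$ minus a single edge is still connected, and this robustness is exactly what I will exploit.

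First I would record that each copy $D_{k-1,n}^{i_j}$ is itself $2$-connected. Indeed, by Lemma~\ref{lem1}(1) the copy $D_{k-1,n}$ has connectivity $n+k-2$; when $k=1$ this is $D_{0,n}=K_n$ with $\kappa=n-1\geq 2$, and when $k\geq 2$ we have $n+k-2\geq n\geq 3$. In either case the hypothesis $n\geq 3$ guarantees $\kappa(D_{k-1,n})\geq 2$. Combined with the presence of the inter-copy edges (which already link the $t$ copies through the $K_t$ structure), this immediately gives that $H$ is connected; and since each copy has $t_{k-1,n}\geq n\geq 3$ vertices, $H$ has at least three vertices.

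Next, fix an arbitrary vertex $v$, say $v\in V(D_{k-1,n}^{i_1})$, and show $H-v$ is connected. The copy $D_{k-1,n}^{i_1}-v$ stays connected because a single copy is $2$-connected, and every other copy is untouched, hence connected. The crucial observation is that, by Lemma~\ref{lem1}(2), $v$ has a \emph{unique} $k$-dimensional neighbour, so $v$ is incident to at most one inter-copy edge of $H$; deleting $v$ therefore destroys at most one edge of the quotient $K_t$. Since $t\geq 3$, the quotient $K_t$ minus (at most) one edge is still connected, so after removing $v$ the surviving inter-copy edges still link all $t$ copies into one connected piece. Hence $H-v$ is connected for every choice of $v$, which yields $\kappa(H)\geq 2$.

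I expect the only delicate point to be the bookkeeping in the last step: one must argue simultaneously that the copy containing $v$ stays internally connected (needing $2$-connectivity of a single copy) and that at most one quotient edge is lost (needing the uniqueness of the $k$-dimensional neighbour), and then combine these two facts through the $t\geq 3$ hypothesis on $K_t$. Everything else is a direct application of Lemma~\ref{lem1}, and no genuinely hard estimate arises; the value of the lemma is as a structural building block for the later packing arguments rather than as a difficult result in itself.
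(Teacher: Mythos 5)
Your proof is correct, but it takes a genuinely different route from the paper's. The paper proves $\kappa(H)\geq 2$ in Menger form: for any two vertices $u,v$ of $H$ it exhibits two internally disjoint $(u,v)$-paths, splitting into the case where $u,v$ lie in the same copy (where $\kappa(D_{k-1,n})=n+k-2\geq 2$ and Lemma \ref{lem6} finish the job) and the case where they lie in distinct copies, where it invokes the Fan Lemma (Lemma \ref{lem4}) to get $2$-fans from $u$ and from $v$ onto the endpoints of the unique inter-copy edges, routes a connecting path through the union $H_1$ of the remaining copies, and closes a cycle through $u$ and $v$; this forces extra bookkeeping about whether $u$ or $v$ coincides with one of those edge endpoints. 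You instead use the cut-vertex characterization of $2$-connectedness: $H$ is connected, has at least three vertices, and $H-v$ stays connected for every $v$, because the copy containing $v$ is $2$-connected (so it survives internally, using exactly the same bound $n+k-2\geq 2$), and by the uniqueness of the $k$-dimensional neighbour (Lemma \ref{lem1}(2)) deleting $v$ removes at most one edge of the quotient $K_t$, which remains connected since $t\geq 3$. Your argument is shorter and avoids both the Fan Lemma and the degenerate-case analysis; what the paper's version buys is an explicit construction of the two disjoint paths, in the same constructive style as the packing arguments of Theorem \ref{thm3} where the lemma is later applied (e.g.\ to produce the disjoint $(D_1,D_2)$-paths), though only the existence statement is actually needed there. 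Both proofs rest on the same structural inputs, namely Lemma \ref{lem1}(1)--(3).
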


\begin{proof}
For convenience, abbreviate $D_{k-1,n}^{i}$ as $D[i]$ for each $i\in \langle t_{k-1,n}+1\rangle$,
if there is no ambiguity.
There is a $k$-dimensional edge between any two distinct copies of $D_{k-1,n}$,
thus $H$ is connected.
Let $u$ and $v$ be any two distinct vertices of $H$.
To prove $\kappa (H)\geq 2$,
we only need to show that there are two internally disjoint $(u,v)$-paths in $H$.
\\
\indent
If $u$ and $v$ belong to the same copy of $D_{k-1,n}$,
without loss of generality, assume that $\{u,v\}\subseteq V(D[i_{1}])$.
By Lemma \ref{lem1}~$(1)$, $\kappa(D[i_{1}])=n+k-2$.
As $k\geq 1$ and $n\geq 3$, $\kappa(D[i_{1}])=n+k-2\geq 2$.
By Lemma \ref{lem6}, the result holds.
\\
\indent
If $u$ and $v$ belong to two distinct copies of $D_{k-1,n}$,
without loss of generality, assume that $u\in V(D[i_{1}])$ and $v\in V(D[i_{2}])$.
Let $H_{1}=D_{k,n}[\bigcup _{r=i_{3}}^{i_{t}}V(D_{k-1,n}^{r})]$.
Since there exists a unique edge between any two distinct copies of $D_{k-1,n}$.
Assume that $u_{1}u_{1}^{\prime}\in E(D[i_{1}],D[i_{2}])$,
$u_{2}u_{2}^{\prime}\in E(D[i_{1}],H_{1})$
and $v_{1}v_{1}^{\prime}\in E(D[i_{2}],H_{1})$,
where $u_{1},u_{2}\in V(D[i_{1}]),v_{1},u_{1}^{\prime}\in V(D[i_{2}])$
and $u_{2}^{\prime},v_{1}^{\prime}\in V(H_{1})$.
In addition, $t^{\prime}$ is the $k$-dimensional neighbor of $t$ with $t\in \{u_{1},u_{2},v_{1}\}$.
It is possible $u\in \{u_{1},u_{2}\}$ or $v\in \{v_{1},u_{1}^{\prime}\}$.
If $u\not\in \{u_{1},u_{2}\}$ and $v\not\in \{v_{1},u_{1}^{\prime}\}$,
by Lemma \ref{lem4}, there are $2$-fans from $u$ to $\{u_{1},u_{2}\}$ and
$v$ to $\{v_{1},u_{1}^{\prime}\}$ in $D[i_{1}]$ and $D[i_{2}]$, respectively.
If $u\in \{u_{1},u_{2}\}$ (or $v\in \{v_{1},u_{1}^{\prime}\}$),
there exists a path in $D[i_{1}]$ (or $D[i_{2}]$) between $u$ (or $v$) and $\{u_{1},u_{2}\}\backslash \{u\}$
(or $\{v_{1},u_{1}^{\prime}\}\backslash \{v\}$) as a connectivity of $D[i_{1}]$ (or $D[i_{2}]$).
Notice that there is a $(u_{2}^{\prime},v_{1}^{\prime})$-path in $H_{1}$ as $H_{1}$ is connected.
Then $u$ and $v$ are contained in a circuit.
Clearly, the result holds.
\end{proof}

\begin{thm}\label{thm2}
Let $D_{1,n}$ be the $1$-dimensional data center network with $n$-port switches for $n\geq 6$.
Then $\pi_{3}(D_{1,n})=\lfloor \frac{2n+3}{4}\rfloor$.
\end{thm}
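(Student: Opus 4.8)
The plan is to establish $\pi_{3}(D_{1,n})\le\lfloor\frac{2n+3}{4}\rfloor$ and $\pi_{3}(D_{1,n})\ge\lfloor\frac{2n+3}{4}\rfloor$ separately, and the upper bound is essentially free from the machinery already in place. By Lemma~\ref{lem1}(1) the graph $D_{1,n}$ is $n$-regular, and by Lemma~\ref{lem1}(4) the quantity $r=\max\{|N(x)\cap N(y)\cap N(z)|\}$ equals $n-3$, attained precisely when $\{x,y,z\}$ lies in a single copy of $K_{n}$. Substituting $k=n$ and $r=n-3$ into Lemma~\ref{lem2} gives $\pi_{3}(D_{1,n})\le\lfloor\frac{3n-(n-3)}{4}\rfloor=\lfloor\frac{2n+3}{4}\rfloor$, so the entire content of the theorem lies in the lower bound: for every $S=\{x,y,z\}$ I must build $m:=\lfloor\frac{2n+3}{4}\rfloor$ internally disjoint $S$-paths.

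Recalling that $D_{1,n}$ is the join of $n+1$ copies $D[0],\dots,D[n]$ of $K_{n}$, any two linked by a unique edge (Lemma~\ref{lem1}(3)) and each vertex carrying one external edge, I would argue by cases on how $x,y,z$ are distributed: (A) all three inside one copy; (B) two in one copy and the third in another; (C) all in distinct copies. Case (A) is the extremal one, since it is the only configuration in which the common neighbourhood reaches $n-3$ and hence the only place where the upper bound is tight.

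In case (A), say $\{x,y,z\}\subseteq D[0]$, Lemma~\ref{thm1} already supplies $\pi_{3}(K_{n})=\lfloor n/2\rfloor$ internally disjoint $S$-paths inside $D[0]$, which are also $S$-paths of $D_{1,n}$. For even $n$ this equals $m$ and nothing more is needed. For odd $n$ one has $\lfloor n/2\rfloor=m-1$, and the heart of the argument is to create a single extra $S$-path through the external edges. Here I would apply Lemma~\ref{thm21}(a) to $G=K_{n}$—legitimate because $\pi_{3}(K_{n})\ge2$ and $3(n-1)-4\pi_{3}(K_{n})=n-1\ge(n-3)+1$—to obtain a packing $\mathcal{T}$ of $\lfloor n/2\rfloor$ paths together with a neighbour $u$ of $\{x,y,z\}$ with $u\notin V(\mathcal{T})$; then every edge at $u$ and all external edges at $x,y,z$ are unused. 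The extra path is $P=y-u-x-x'-\cdots-z'-z$, where $xx'$ and $zz'$ are external edges and $x'-\cdots-z'$ is any path joining $x'$ and $z'$ inside $H_{1}=D_{1,n}-V(D[0])$, which exists because $H_{1}$ is a union of $n\ge3$ copies and hence connected by Lemma~\ref{lem5}. Since $u$ and all of $x',\dots,z'$ avoid $V(\mathcal{T})\subseteq V(D[0])$, one checks $V(P)\cap V(\mathcal{T})=S$ and that $P$ is edge-disjoint from $\mathcal{T}$, producing the $m$-th path.

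For cases (B) and (C) the common neighbourhood is strictly below $n-3$, so there is slack, but the unique-edge property forces almost every $S$-path to leave its copy and reach the remaining vertices through distinct intermediate copies. The plan is to apply the Fan Lemma (Lemma~\ref{lem4}) inside the $(n-1)$-connected copies $D[i]$ to fan $x,y,z$ out onto many external edges, and then to splice these half-paths across distinct relay copies using the $2$-connectivity of unions of copies (Lemma~\ref{lem5}) together with Menger's theorem (Lemma~\ref{lem6}); a direct inter-copy edge handles one path cheaply and the relays handle the rest. Since each copy has $n$ external edges to $n$ distinct copies and, in case (C), there remain $n-2\ge m$ relay copies for $n\ge6$, the bookkeeping closes and at least $m$ internally disjoint $S$-paths can be assembled. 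The genuine obstacle is case (A) with $n$ odd, where the count is tight to the last path and that last path can only be completed externally; Lemma~\ref{thm21} exists exactly to guarantee the free neighbour $u$ that makes this possible.
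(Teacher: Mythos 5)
Your upper bound is exactly the paper's (Lemma \ref{lem1}(4) plus Lemma \ref{lem2}), and your case (A) is sound: for odd $n$ you apply Lemma \ref{thm21}(a) to $G=K_n$ (the hypothesis check $3(n-1)-4\lfloor n/2\rfloor=n-1\ge (n-3)+1$ is valid precisely when $n$ is odd, which is the only case you need) to free a neighbour $u$ of $S$ inside $D[0]$, and then route the last path $y\,u\,x\,x'\cdots z'\,z$ through the external edges and the connected graph $H_1$. The paper instead writes all $\lfloor\frac{2n+3}{4}\rfloor$ paths explicitly ($P_1=xyz$, $P_2=xzu_1y$, $P_{\frac{i}{2}+2}=xu_izu_{i+1}y$, $P_{\frac{n-1}{2}+1}=xu_{n-3}yy'T[y',z']z'z$), but your mechanism is the same one the paper itself uses in Claim 1 of Theorem \ref{thm3}, so this part is a correct, slightly different route to the extremal case.

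The genuine gap is in cases (B) and (C), which you leave as a plan whose bookkeeping is wrong, and which in the paper constitute the bulk of the proof. Write $m=\lfloor\frac{2n+3}{4}\rfloor$. In case (C) you justify feasibility by ``there remain $n-2\ge m$ relay copies,'' i.e.\ one relay copy per path. But your own plan splices \emph{half-paths}, two per $S$-path: a path through $x\in D[0]$, $y\in D[1]$, $z\in D[2]$ has two inter-copy segments, and by edge-disjointness the three direct edges among $D[0],D[1],D[2]$ can serve at most three segments in total. So at least $2m-3$ segments must go through relay copies, and in the routing your plan (and the paper's construction) produces, a relay $D[j]$ serves only one such segment, since it has exactly one edge to each of $D[0],D[1],D[2]$ and a segment consumes two of these. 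The inequality that must close is therefore $2m-3\le n-2$, not $m\le n-2$; it does hold for $n\ge 6$, but \emph{with equality} when $n$ is odd, so every relay copy and every direct edge is used and there is no slack at all --- contrary to your assertion, case (C) is exactly as tight as case (A). The same tightness hits case (B): for odd $n$ one has $2m=n+1>n=d_{D_{1,n}}(x)$, so $x$ cannot be internal on all $m$ paths and at least one path is forced to cross between $D[0]$ and $D[1]$ twice. Because of this, the construction cannot be discharged by citing Lemma \ref{lem4}, Lemma \ref{lem5} and Lemma \ref{lem6}; one must exhibit the routing, and must also handle the degenerate placements in which $x$, $y$ or $z$ coincides with a designated connector vertex (for instance $z=v_2$, an endpoint of the unique $D[0]$-$D[1]$ edge), since then the naive paths are not even paths. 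The paper's Cases 2 and 3, together with all the primed modifications $P_i'$, do precisely this work, and your proposal omits it.
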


\begin{proof}
Let $D_{1,n}=D_{0,n}^{0}\bigoplus D_{0,n}^{1}\bigoplus\ldots\bigoplus D_{0,n}^{t_{0,n}}$
with $D_{0,n}^{i}\cong D_{0,n}\cong K_{n}$ and $t_{0,n}=|V(D_{0,n})|=n$, where $i\in \langle n+1\rangle$.
For convenience, abbreviate $D_{0,n}^{i}$ as $D[i]$.
Let $u,v$ and $w$ be any three distinct vertices of $D_{0,n}$.
By Lemma \ref{lem1}~$(4)$, $|N_{D_{1,n}}(u)\cap N_{D_{1,n}}(v)\cap N_{D_{1,n}}(w)|= n-3$.
As $D_{1,n}$ is $n$-regular, by Lemma \ref{lem2}, $\pi_{3}(D_{1,n})\leq\lfloor \frac{3n-(n-3)}{4}\rfloor
=\lfloor \frac{2n+3}{4}\rfloor$.
To prove the result, we just need to show that $\pi_{D_{1,n}}(S)\geq \lfloor \frac{2n+3}{4}\rfloor$
for $S=\{x,y,z\}$, where $x,y$ and $z$ are any three distinct vertices of $D_{1,n}$.
Let $V(D[0])=\{u_{1},u_{2},\ldots,u_{n}\},V(D[1])=\{v_{1},v_{2},\ldots,
v_{n}\}$ and
$V(D[2])=\{w_{1},w_{2},\ldots,w_{n}\}$.
Consider the following three cases.

$\mathbf {Case~1}$. $x,y$ and $z$ belong to the same copy of $D_{0,n}$.

\indent
Without loss of generality, assume that $x,y,z\in V(D[0])$
and $\{x,y,z\}=\{u_{n-2},u_{n-1},u_{n}\}$.
By Lemma \ref{thm1} with $k=3$,
there are $\lfloor\frac{n}{2}\rfloor$ ID$S$-paths in $D_{0,n}$.
If $n\equiv 0($mod~$2)$, then $\lfloor\frac{n}{2}\rfloor=\lfloor \frac{2n+3}{4}\rfloor$,
which implies that the result holds.
If $n\equiv 1($mod~$2)$, then $\lfloor\frac{n}{2}\rfloor+1=\frac{n-1}{2}+1=\lfloor \frac{2n+3}{4}\rfloor$.
By Lemma \ref{lem1}~$(3)$, without loss of generality,
let $y^{\prime}\in V(D[1])$ and $z^{\prime}\in V(D[2])$,
where $y^{\prime}$ and $z^{\prime}$ are the $1$-dimensional neighbours of $y$ and $z$, respectively.
Notice that $D[1]$ and $D[2]$ are connected by the $1$-dimensional edge.
Then there exists a $(y^{\prime},z^{\prime})$-path, say $T$, in $D_{1,n}[V(D[1])\cup V(D[2])]$.
Let $P_{1}=xyz,P_{2}=xzu_{1}y,P_{\frac{i}{2}+2}=xu_{i}zu_{i+1}y$
and $P_{\frac{n-1}{2}+1}=xu_{n-3}yy^{\prime}T[y^{\prime},z^{\prime}]z^{\prime}z$
with $2\leq i\leq n-5$ and $i$ is even.
Then $P_{1},P_{2},\ldots,P_{\frac{n-1}{2}+1}$
are $\lfloor \frac{2n+3}{4}\rfloor$ ID$S$-paths in $D_{1,n}$.

\begin{figure}[ht]
\begin{center}
\scalebox{0.5}[0.5]{\includegraphics{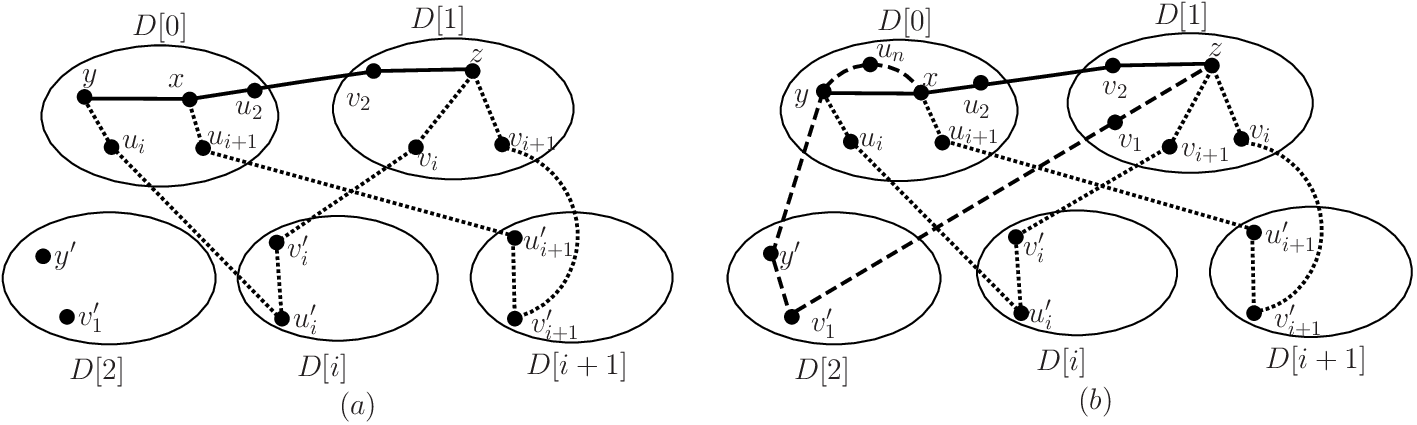}}\\
\captionsetup{font={small}}
\caption{The illustration of Case $2$ in Theorem \ref{thm2}}\label{Fig.2}
\end{center}
\end{figure}

$\mathbf {Case~2}$. $x,y$ and $z$ belong to two distinct copies of $D_{0,n}$.

\indent
Without loss of generality, assume that $x,y\in V(D[0])$ and $z\in V(D[1])$.
As $|\{x^{\prime},y^{\prime}\}\cap V(D[1])|\leq 1$,
without loss of generality, let $y^{\prime}\in V(D[2])$.
Suppose that $y=u_{1}$ and $x=u_{t}$ with some $t\in [n]$ and $t$ is even
(otherwise, relable the vertices in $V(D[0])$).
Since there exists a unique edge between any two distinct copies of $D_{k-1,n}$.
Assume that $u_{2}v_{2}\in E(D[0],D[1]),u_{i}u_{i}^{\prime}\in E(D[0],D[i]),v_{1}v_{1}^{\prime}\in E(D[1],D[2])$
and $v_{i}v_{i}^{\prime}\in E(D[1],D[i])$ with $i\in [n]\backslash \{1,2\}$.
By Lemma \ref{lem1}~$(2)$, one has that $y^{\prime}\neq v_{1}^{\prime}$ and $u_{i}^{\prime}\neq v_{i}^{\prime}$.
Let
$$P_{1}=yxu_{2}v_{2}z$$
and
$$P_{\frac{i+1}{2}}=yu_{i}u_{i}^{\prime}v_{i}^{\prime}
v_{i}zv_{i+1}v_{i+1}^{\prime}u_{i+1}^{\prime}u_{i+1}x,$$
where $3\leq i\leq n-1$ and $i$ is odd.
\\
\indent
Notice that there are some $i_{0},j_{0}\in [n]$ and $i$ is odd
such that $x=u_{i_{0}}$
and $z=v_{j_{0}}$.
If $i_{0}=2$ or $j_{0}=2$,
then $P_{1}^{\prime}$ is obtained from $P_{1}$ by replacing $xu_{2}$ with $x$, or $v_{2}z$ with $z$.
Otherwise, $P_{1}^{\prime}=P_{1}$.
If $i_{0}=i+1$ or $j_{0}=i$ or $j_{0}=i+1$ with $3\leq i\leq n-1$ and $i$ is odd,
then $P_{\frac{i+1}{2}}^{\prime}$ is obtained from $P_{\frac{i+1}{2}}$ by replacing
$u_{i+1}x$ with $x$, or $v_{i}z$ with $z$, or $zv_{i+1}$ with $z$.
Otherwise, $P_{\frac{i+1}{2}}^{\prime}=P_{\frac{i+1}{2}}$.
\\
\indent
If $n\equiv 0($mod~$2)$, then $\{P_{1}^{\prime},\ldots,P_{\frac{n}{2}}^{\prime}\}$
 is the set of
$\lfloor \frac{2n+3}{4}\rfloor$ ID$S$-paths
in $D_{1,n}$ (see Figure~\ref{Fig.2}~$(a)$).
\\
\indent
If $n\equiv 1($mod~$2)$,
let $P_{\frac{n-1}{2}+1}=xu_{n}yy^{\prime}v_{1}^{\prime}v_{1}z$.
If $j_{0}=1$, then $P_{\frac{n-1}{2}+1}^{\prime}$ is obtained from $P_{\frac{n-1}{2}+1}$
by replacing $v_{1}z$ with $z$.
Otherwise, $P_{\frac{n-1}{2}+1}^{\prime}=P_{\frac{n-1}{2}+1}$.
Thus $\{P_{1}^{\prime},\ldots,P_{\frac{n-1}{2}}^{\prime},P_{\frac{n-1}{2}+1}^{\prime}\}$
is the set of
$\lfloor \frac{2n+3}{4}\rfloor$ ID$S$-paths
in $D_{1,n}$ (see Figure~\ref{Fig.2}~$(b)$).

$\mathbf {Case~3}$. $x,y$ and $z$ belong to three distinct copies of $D_{0,n}$.

\indent
Without loss of generality, assume that $x\in V(D[0]),y\in V(D[1])$ and $z\in V(D[2])$.
By Lemma \ref{lem1}~$(3)$,
assume that $u_{1}v_{1}\in E(D[0],D[1]),u_{2}w_{1}\in E(D[0],D[2]),v_{2}w_{2}\in E(D[1],D[2])$,
$u_{j}u_{j}^{\prime}\in E(D[0],D[j]),v_{j}v_{j}^{\prime}\in E(D[1],D[j])$ and $w_{j}w_{j}^{\prime}\in E(D[2],D[j])$
with $3\leq j\leq n$.
Let
$$P_{1}=xu_{1}v_{1}yv_{2}w_{2}z;$$
$$P_{2}=zw_{1}u_{2}xu_{3}u_{3}^{\prime}v_{3}^{\prime}v_{3}y$$
and
$$P_{\frac{i}{2}+1}=zw_{i}w_{i}^{\prime}u_{i}^{\prime}
u_{i}xu_{i+1}u_{i+1}^{\prime}v_{i+1}^{\prime}v_{i+1}y,$$
where $4\leq i\leq n-1$ and $i$ is even (see Figure~\ref{Fig.3}).
\begin{figure}[ht]
\begin{center}
\scalebox{0.5}[0.5]{\includegraphics{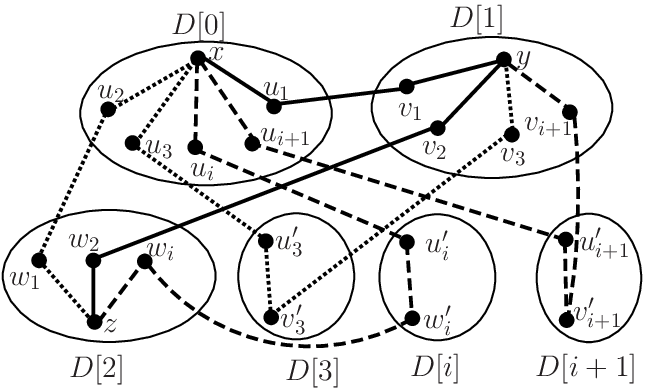}}\\
\captionsetup{font={small}}
\caption{The illustration of Case $3$ in Theorem \ref{thm2}}\label{Fig.3}
\end{center}
\end{figure}
\\
\indent
Recall that $x\in \{u_{1},u_{2},\ldots,u_{n}\}$,
$y\in \{v_{1},v_{2},\ldots,v_{n}\}$
and $z\in \{w_{1},w_{2},\ldots,w_{n}\}$.
Then there are some $i_{0},j_{0},k_{0}\in [n]$
such that $x=u_{i_{0}},y=v_{j_{0}}$ and $z=w_{k_{0}}$.
If $i_{0}=1$ or $j_{0}=1$ or $j_{0}=2$ or $k_{0}=2$,
then $P_{1}^{\prime}$ is obtained from $P_{1}$ by replacing $xu_{1}$ with $x$,
or $v_1y$ with $y$, or $yv_2$ with $y$, or $w_2z$ with $z$.
Otherwise, $P_1^{\prime}=P_1$.
If $i_0=2$ or $i_0=3$ or $j_0=3$ or $k_0=1$,
then $P_{2}^{\prime}$ is obtained from $P_{2}$ by replacing
$u_{2}x$ with $x$, or $xu_{3}$ with $x$, or $v_{3}y$ with $y$, or $zw_{1}$ with $z$.
Otherwise, $P_{2}^{\prime}=P_{2}$.
If $i_{0}=i$ or $i_{0}=i+1$ or $j_{0}=i+1$ or $k_{0}=i$ with $4\leq i\leq n-1$ and $i$ is even,
then $P_{\frac{i}{2}+1}^{\prime}$ is obtained from $P_{\frac{i}{2}+1}$ by replacing
$u_{i}x$ with $x$, or $xu_{i+1}$ with $x$, or $v_{i+1}y$ with $y$, or $zw_{i}$ with $z$.
Otherwise, $P_{\frac{i}{2}+1}^{\prime}=P_{\frac{i}{2}+1}$.
Thus $P_{1}^{\prime},P_{2}^{\prime},\ldots,P_{\frac{i}{2}+1}^{\prime}$ are
$\lfloor\frac{2n+3}{4}\rfloor$ ID$S$-paths in $D_{1,n}$.
\end{proof}

\begin{thm}\label{thm3}
Let $D_{k,n}$ be the $k$-dimensional data center network with $n$-port
switches for $k\geq 0$ and $n\geq 6$.
Then $\pi_{3}(D_{k,n})=\lfloor\frac{2n+3k}{4}\rfloor$.
\end{thm}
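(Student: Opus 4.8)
The plan is to establish the two inequalities separately: the upper bound drops out immediately from the structural lemmas, and essentially all of the work goes into the matching lower bound, which I would prove by induction on $k$.

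For the upper bound, note that $D_{k,n}$ is $(n+k-1)$-regular by Lemma~\ref{lem1}(1), and by Lemma~\ref{lem1}(4) the quantity $r=\max\{|N_{D_{k,n}}(x)\cap N_{D_{k,n}}(y)\cap N_{D_{k,n}}(z)|:\{x,y,z\}\subseteq V(D_{k,n})\}$ equals exactly $n-3$ (the maximum being attained precisely on triples sitting in a common $K_{n}$). Substituting these two values into Lemma~\ref{lem2} gives $\pi_{3}(D_{k,n})\le\lfloor\frac{3(n+k-1)-(n-3)}{4}\rfloor=\lfloor\frac{2n+3k}{4}\rfloor$, so only the lower bound remains.

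For the lower bound I would induct on $k$. The base cases are $k=0$, where $D_{0,n}=K_{n}$ and Lemma~\ref{thm1} applied with Steiner-set size $3$ yields $\pi_{3}(K_{n})=\lfloor n/2\rfloor=\lfloor\frac{2n}{4}\rfloor$, and $k=1$, which is exactly Theorem~\ref{thm2} (this is where the hypothesis $n\ge 6$ is inherited). For $k\ge 2$, write $D_{k,n}=D[0]\bigoplus\cdots\bigoplus D[t_{k-1,n}]$ with each $D[i]\cong D_{k-1,n}$, fix $S=\{x,y,z\}$, and split into three cases by how $S$ meets the copies: all three $S$-vertices in one copy (Case~1), two in one copy and one in another (Case~2), or all in distinct copies (Case~3). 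The recurring mechanism is to obtain most of the paths inside the copy (or copies) already containing the $S$-vertices via the induction hypothesis, and then to complete the packing by exporting the few remaining paths through the unique $k$-dimensional edges into the surrounding copies, whose induced subgraph has connectivity at least $2$ by Lemma~\ref{lem5} and therefore supports the needed internally disjoint connecting segments. In Case~1, for instance, induction inside $D[0]$ supplies $\lfloor\frac{2n+3(k-1)}{4}\rfloor$ ID$S$-paths, and since $\lfloor\frac{2n+3k}{4}\rfloor-\lfloor\frac{2n+3(k-1)}{4}\rfloor\in\{0,1\}$ at most one additional path is ever required; when it is, I would invoke Lemma~\ref{thm21} to reserve a free neighbour of an $S$-vertex left unused by the inner batch, push it across its $k$-dimensional edge, and close the path up through a neighbouring copy. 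Case~2 uses the same idea on the copy holding two of the $S$-vertices, while Case~3 is handled by a direct fan-and-route construction, applying the Fan Lemma (Lemma~\ref{lem4}) in each of the three copies and pairing the resulting ports through the remaining (abundantly many) copies.

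The hard part will be the bookkeeping forced by the near-perfect nature of the packing: because the target equals $\lfloor\frac{3(n+k-1)-(n-3)}{4}\rfloor$, almost every edge incident with $x$, $y$ and $z$ must be consumed, so one must track precisely which neighbours each inner batch of paths uses and guarantee—via Lemma~\ref{thm21} with $\ell\in\{1,3\}$—that exactly the right number of free neighbours survive to be exported through \emph{distinct} $k$-dimensional edges. Keeping these exported segments internally disjoint within the $2$-connected union of surrounding copies, while simultaneously matching the parity jumps of the floor function (the $0$-versus-$1$ gap above) across the three cases, is where the delicate part of the case analysis concentrates.
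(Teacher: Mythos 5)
Your architecture coincides with the paper's: the upper bound via Lemma~\ref{lem2} with regularity $n+k-1$ and $r=n-3$ from Lemma~\ref{lem1}(4); induction on $k$ with base cases from Lemma~\ref{thm1} and Theorem~\ref{thm2}; the split into three cases according to how $S$ meets the copies; Lemma~\ref{thm21} to reserve unused neighbours; Lemma~\ref{lem5} for $2$-connectivity of the union of surrounding copies; and the $0$-versus-$1$ parity gap of the floor function. Your Case~1 and Case~3 are, in outline, exactly the paper's Claim~1 and Claim~2.

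The gap is your Case~2 (say $x,y\in V(D[0])$ and $z\in V(D[1])$), which you dispatch with ``uses the same idea on the copy holding two of the $S$-vertices.'' Taken literally this fails: the induction hypothesis produces internally disjoint $\{x,y,z\}$-paths only when all three vertices lie in one copy, and here no $S$-path whatsoever lives inside $D[0]$, since every $S$-path must contain $z$; it is not ``the few remaining'' paths that must be exported across $k$-dimensional edges but every one of them. The paper's Claim~3 bridges this with a substitution idea absent from your plan: choose a proxy vertex $u\in V(D[0])$ with $u\notin N_{D[0]}[x]\cup N_{D[0]}[y]$ (forcing $|N_{D[0]}(x)\cap N_{D[0]}(y)\cap N_{D[0]}(u)|\leq 1$), apply the induction hypothesis together with Lemma~\ref{thm21} to $S_{0}=\{x,y,u\}$ to obtain a packing $\mathcal{T}$ of ID$S_{0}$-paths in $D[0]$, and then perform surgery on each path of $\mathcal{T}$: delete $u$, send each of its path-neighbours (a single $u_{1i}$ when $u$ is an endpoint, or the pair $u_{2j},\bar{u}_{2j}$ when $u$ is internal) across its unique $k$-dimensional edge, route through pairwise distinct intermediate copies, and re-enter $D[1]$ at vertices forming a fan at $z$ (Lemma~\ref{lem4}), thereby converting each $\{x,y,u\}$-path into an $\{x,y,z\}$-path. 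This proxy-and-surgery step --- plus the bookkeeping that $u$'s $r+2s$ path-neighbours exit into distinct copies, and the comparison of $d_{\mathcal{T}}(u)$ with $d_{D[0]}(u)$ that controls whether a spare fan-leg at $z$ is available in the parity-off subcase --- is the longest and most delicate part of the paper's proof; it cannot be replaced by a direct appeal to induction inside $D[0]$, because the tightness of the bound (driven by the up-to-$n-3$ common neighbours of $x$ and $y$) must be handled by the inductive packing itself. The rest of your plan would survive intact once this case is repaired along these lines.
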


\begin{proof}
Let $D_{k,n}=D_{k-1,n}^{0}\bigoplus D_{k-1,n}^{1}\bigoplus\ldots\bigoplus D_{k-1,n}^{t_{k-1,n}}$
with $D_{k-1,n}^{i}\cong D_{k-1,n}$ and $t_{k-1,n}=|V(D_{k-1,n})|$, where $i\in \langle t_{k-1,n}+1\rangle$.
For convenience, abbreviate $D_{k-1,n}^{i}$ as $D[i]$
and define $u^{\prime}$ as the $k$-dimensional neighbour of each vertex $u$ in $D_{k,n}$.
\\
\indent
For a given $n$ with $n\geq 4$, we apply induction on $k$ to prove the result.
The result is true for $k=0$ and $k=1$
from Lemma \ref{thm1} and Theorem \ref{thm2}, respectively.
Suppose that the result holds for $D_{k-1,n}$ with $k\geq 2$.
Consider $D_{k,n}$ as follows.
As $D_{k,n}$ is $(n+k-1)$-regular, by Lemma \ref{lem1}~$(4)$ and Lemma \ref{lem2},
$\pi_{3}(D_{k,n})\leq\lfloor \frac{2n+3k}{4}\rfloor$.
Thus it suffices to prove that
there exist at least $\lfloor\frac{2n+3k}{4}\rfloor$ internally disjoint paths
connecting any three distinct vertices in $D_{k,n}$.
Let $S=\{x,y,z\}$,
where $x,y$ and $z$ are any three distinct vertices in $D_{k,n}$.
The following three claims hold.

\begin{Claim}
There exist at least $ \lfloor \frac{2n+3k}{4}\rfloor$ ID$S$-paths in $D_{k,n}$,
if the vertices of $S$ belong to the same copy of $D_{k-1,n}$.
\end{Claim}


\noindent {\it Proof of Claim 1.}
Without loss of generality, assume that $S\subseteq V(D[0])$.
By the inductive hypothesis,
$\pi_{3}(D[0])=\lfloor\frac{2n+3(k-1)}{4}\rfloor$.
Then
$\pi_{D[0]}(S)\geq \lfloor\frac{2n+3(k-1)}{4}\rfloor$.
\\
\indent
If
$n\equiv 0($mod~$2)$ and $k\equiv 1($mod~$4)$, or $n\equiv 1($mod~$2)$
and $k\equiv 3($mod~$4)$,
then
$\lfloor\frac{2n+3(k-1)}{4}\rfloor=\lfloor\frac{2n+3k}{4}\rfloor$.
Thus $\pi_{D_{k,n}}(S)\geq \pi_{D[0]}(S)\geq\lfloor\frac{2n+3(k-1)}{4}\rfloor=\lfloor\frac{2n+3k}{4}\rfloor$.
\begin{figure}[ht]
\begin{center}
\scalebox{0.5}[0.5]{\includegraphics{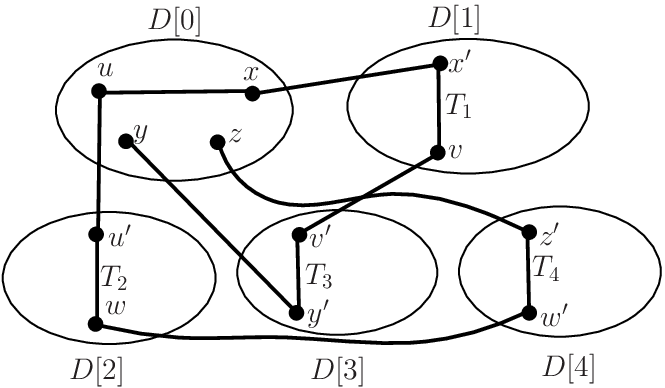}}\\
\captionsetup{font={small}}
\caption{The illustration of Claim $1$ in Theorem \ref{thm3}}\label{Fig.4}
\end{center}
\end{figure}
\\
\indent
If $n\equiv 0($mod~$2)$ and $k\not\equiv 1($mod~$4)$,
or $n\equiv 1($mod~$2)$ and $k\not\equiv 3($mod~$4)$,
then
$\lfloor\frac{2n+3(k-1)}{4}\rfloor+1=\lfloor\frac{2n+3k}{4}\rfloor$.
Recall that $\pi_{3}(D[0])=\lfloor\frac{2n+3(k-1)}{4}\rfloor$.
Then $d_{D[0]}(x)+d_{D[0]}(y)+d_{D[0]}(z)-4\pi_{3}(D[0])-(n-3)
=3(n+k-2)-4\times \lfloor\frac{2n+3(k-1)}{4}\rfloor-(n-3)\geq 1$ as
$D_{k-1,n}$ is $(n+k-2)$-regular.
By Lemma \ref{lem1}~($4$) and Lemma \ref{thm21}~(1),
then there exist at least one vertex, say $u$,
and a set $\mathcal{P}$ of $\lfloor\frac{2n+3(k-1)}{4}\rfloor$ ID$S$-paths
such that $u\in (N_{D[0]}(x)\cup N_{D[0]}(y)\cup N_{D[0]}(z))\backslash V(\mathcal{P})$.
As $\lfloor\frac{2n+3(k-1)}{4}\rfloor+1=\lfloor\frac{2n+3k}{4}\rfloor$,
we need to find one more desired path except the paths in $\mathcal{P}$.
Without loss of generality, assume that $u\in N_{D[0]}(x)\backslash V(\mathcal{P})$.
According to Lemma \ref{lem1} ($2$) and Lemma \ref{lem1} $(3)$,
assume that $x^{\prime}\in V(D[1]),u^{\prime}\in V(D[2]),
y^{\prime}\in V(D[3])$ and $z^{\prime}\in V(D[4])$.
Let $v\in V(D[1])$ and $w\in V(D[2])$ such that $v^{\prime}\in V(D[3])$ and $w^{\prime}\in V(D[4])$.
Since $D[i]$ is connected, there exists one path, denoted by $T_{i}$
connecting $\alpha$ and $\beta$ in $D[i]$,
where $\alpha\in\{x^{\prime},u^{\prime},y^{\prime},z^{\prime}\}\cap V(D[i]),\beta\in\{v,w,v^{\prime},w^{\prime}\}\cap V(D[i])$
and $1\leq i\leq 4$.
Let $P=zz^{\prime}T_{4}[z^{\prime},w^{\prime}]w^{\prime}w
T_{2}[w,u^{\prime}]u^{\prime}uxx^{\prime}T_{1}[x^{\prime},v]vv^{\prime}T_{3}[v^{\prime},y^{\prime}]y^{\prime}y$
 (see Figure~\ref{Fig.4}).
Then $\mathcal{P}\cup \{P\}$ is the set of
$\lfloor\frac{2n+3k}{4}\rfloor$ ID$S$-paths in $D_{k,n}$.
$\hfill\blacksquare$

\indent
Let $u$ be a vertex of $D_{k-1,n}$ and
$A$ be a set of $n+k-2$ vertices of $D_{k-1,n}$ such that $u\not\in A$.
By Fan Lemma \ref{lem4},
there is a fan, say $\mathcal{F}$, in $D_{k-1,n}$ from $u$ to $A$.
For any vertex $v\in A$, we denote \emph{$\mathcal{F}[u,v]$} (or \emph{$\mathcal{F}[v,u]$})
\emph{the $(u,v)$-path} in $\mathcal{F}$.
\begin{Claim}
There exist at least $ \lfloor \frac{2n+3k}{4}\rfloor$ ID$S$-paths in $D_{k,n}$,
if the vertices of $S$ belong to three distinct copies of $D_{k-1,n}$.
\end{Claim}

\noindent {\it Proof of Claim 2.}
Without loss of generality, assume that $x\in V(D[0]),y\in V(D[1])$ and $z\in V(D[2])$.
Since $V(D[0])$ is the union of copies of $V(D_{k-2,n})$,
assume that $x,u\in V(D_{k-2,n}^{0})$ such that $N_{D_{k-2,n}^{0}}(u)\cap N_{D_{k-2,n}^{0}}(x)\neq \emptyset$.
Let $v\in V(D[0])\backslash V(D_{k-2,n}^{0})$ such that
$v^{\prime}\in N_{D_{k-2,n}^{0}}(u)\cap N_{D_{k-2,n}^{0}}(x)$,
where $v^{\prime}$ is the $(k-1)$-dimensional neighbour of $v$ in $D[0]$.
Thus $|N_{D[0]}(x)\cap N_{D[0]}(u)\cap N_{D[0]}(v)|=\gamma=1$ as Lemma \ref{lem1}~(2).
By the inductive hypothesis, $\pi_{3}(D[0])=\lfloor\frac{2n+3(k-1)}{4}\rfloor$.
When $n\equiv 0($mod~$2)$ and $k\not\equiv 1($mod~$4)$,
or $n\equiv 1($mod~$2)$ and $k \not\equiv 3($mod~$4)$,
then $d_{D[0]}(x)+d_{D[0]}(u)+d_{D[0]}(v)-4\pi_{3}(D[0])-\gamma
=3(n+k-2)-4\times \lfloor\frac{2n+3(k-1)}{4}\rfloor-1\geq 3$.
By Lemma \ref{thm21}~(2),
there exist two distinct vertices $\alpha,\beta$ and a set $\mathcal{P}$ of
$\lfloor\frac{2n+3(k-1)}{4}\rfloor$
internally disjoint $\{x,u,v\}$-paths in $D[0]$
such that such that
$\alpha,\beta\in ((N_{D[0]}(x)\cup N_{D[0]}(u)\cup N_{D[0]}(v))\backslash V(\mathcal{P}))$.
It implies that
$|(N_{D[0]}(x)\cup N_{D[0]}(u)\cup N_{D[0]}(v))\backslash V(\mathcal{P})|\geq 2$.
Denoted by the set of these paths $\mathcal{P}=\{T_{1},T_{2},\ldots,T_{r},$
$\tilde{T}_{1},\tilde{T}_{2},\ldots,\tilde{T}_{s},
\hat{T}_{1},\hat{T}_{2},\ldots,\hat{T}_{t}\}$,
where $r+s+t=\lfloor\frac{2n+3(k-1)}{4}\rfloor$.
Without loss of generality, assume that $d_{T_{i}}(u)=d_{\tilde{T}_{j}}(v)=d_{\hat{T}_{\ell}}(x)=2$
with $i\in [r],j\in [s]$ and $\ell\in [t]$.
Then $d_{\mathcal{P}}(x)=r+s+2t\leq d_{D[0]}(x)=n+k-2, d_{\mathcal{P}}(u)=2r+s+t\leq d_{D[0]}(u)=n+k-2$
and $d_{\mathcal{P}}(v)=r+2s+t\leq d_{D[0]}(v)=n+k-2$.
\\
\indent
Recall that $\alpha^{\prime}$ is the $k$-dimensional neighbour of a vertex $\alpha$ in $D_{k,n}$.
Select $r+s+2t$ vertices, say $x_{1i},x_{2j},x_{3\ell},\bar{x}_{3\ell}$, different from $x$ in $D[0]$;
$2r+s+t$ vertices, say $y_{1i},\bar{y}_{1i},y_{2j},y_{3\ell}$, different from $y$ in $D[1]$
and $r+2s+t$ vertices, say $z_{1i},z_{2j},\bar{z}_{2j},z_{3\ell}$, different from $z$ in $D[2]$
such that the condition $\bm{(\mathcal{A})}$ holds,
where
$\bm{(\mathcal{A})}:$ {\bf any two vertices in each of $\bm{\{z^{\prime},x_{1i}^{\prime},x_{3\ell}^{\prime}\}}$,
$\bm{\{x^{\prime},\bar{y}_{1i}^{\prime},y_{2j}^{\prime}\}}$ and $\bm{\{y^{\prime},x_{2j}^{\prime},\bar{x}_{3\ell}^{\prime}\}}$
belong to distinct copies of $\bm{D_{k-1,n}}$,
where $\bm{1\leq i\leq r,1\leq j\leq s}$ and $\bm{1\leq \ell\leq t}$}.
Without loss of generality,
assume that
$\{x_{1i}^{\prime},y_{1i}^{\prime}\}\subseteq V(D[i+2]),
\{\bar{y}_{1i}^{\prime},z_{1i}^{\prime}\}\subseteq V(D[r+i+2])$,
$\{x_{2j}^{\prime},z_{2j}^{\prime}\}\subseteq V(D[2r+j+2]),
\{y_{2j}^{\prime},\bar{z}_{2j}^{\prime}\}\subseteq V(D[2r+s+j+2])$,
$\{x_{3\ell}^{\prime},y_{3\ell}^{\prime}\}\subseteq V(D[2r+2s+\ell+2])$
and $\{\bar{x}_{3\ell}^{\prime},z_{3\ell}^{\prime}\}\subseteq V(D[2r+2s+t+\ell+2])$,
where $1\leq i\leq r,1\leq j\leq s$ and $1\leq \ell\leq t$.
This can be done for the following reasons:
When $k\geq 2$ and $n\geq 6$,
the number $t_{k-1,n}+1$ of the copies of $D_{k-1,n}$ in $D_{k,n}$
is much greater than $2(n+k-1)$ and $2r+2s+2t+2\leq 2(n+k-1)$.
Let $\bm{H_{0}=D_{k,n}[\bigcup_{i=3}^{2r+2s+2t+2}V(D[i])]}$,
by Lemma \ref{lem1}~$(3)$ and $\bm{(\mathcal{A})}$, one has that
$x^{\prime},y^{\prime},z^{\prime}\not\in V(H_{0})$.
\\
\indent
Notice that $D[\delta]$ is connected with $\delta\in \langle t_{k-1,n}+1\rangle$.
Then there exists one path connecting any two distinct vertices in $D[\delta]$.
Denoted by $U_{1i},\bar{U}_{1i},U_{2j},\bar{U}_{2j},U_{3\ell}$ and $\bar{U}_{3\ell}$
the $(x_{1i}^{\prime},y_{1i}^{\prime})$-path, $(\bar{y}_{1i}^{\prime},z_{1i}^{\prime})$-path,
$(x_{2j}^{\prime},z_{2j}^{\prime})$-path, $(y_{2j}^{\prime},\bar{z}_{2j}^{\prime})$-path,
$(x_{3\ell}^{\prime},y_{3\ell}^{\prime})$-path and $(\bar{x}_{3\ell}^{\prime},z_{3\ell}^{\prime})$-path
in $D[i+2],D[r+i+2],D[2r+j+2],D[2r+s+j+2],D[2r+2s+\ell+2]$ and $D[2r+2s+t+\ell+2]$, respectively.
\\
\indent
Let
$$A_{1}=\{x_{1i},x_{2j},x_{3\ell},\bar{x}_{3\ell}|1\leq i\leq r,1\leq j\leq s,1\leq \ell\leq t\},$$
$A_{2}\subseteq V(D[0])\backslash (A_{1}\cup \{x\})$
and $A=A_{1}\cup A_{2}$ such that $|A|=|A_{1}|+|A_{2}|=n+k-2$;
$$B_{1}=\{y_{1i},\bar{y}_{1i},y_{2j},y_{3\ell}|1\leq i\leq r,1\leq j\leq s,1\leq \ell\leq t\},$$
$B_{2}\subseteq V(D[1])\backslash (B_{1}\cup \{y\})$
and $B=B_{1}\cup B_{2}$ such that $|B|=|B_{1}|+|B_{2}|=n+k-2$;
$$C_{1}=\{z_{1i},z_{2j},\bar{z}_{2j},z_{3\ell}|1\leq i\leq r,1\leq j\leq s,1\leq \ell\leq t\},$$
$C_{2}\subseteq V(D[2])\backslash (C_{1}\cup \{z\})$
and $C=C_{1}\cup C_{2}$ such that $|C|=|C_{1}|+|C_{2}|=n+k-2$.
If $|\Lambda_{1}|=n+k-2$, then $\Lambda_{2}=\emptyset$,
where $\Lambda\in\{A,B,C\}$.
Otherwise, $\Lambda_{2}\neq\emptyset$.
\\
\indent
By Fan Lemma \ref{lem4}, there is a fan $\mathcal{F}$ which contains $n+k-2$ internally disjoint paths
from $x$ to $A$ in $D[0]$.
Similarly,
there are two fans, denoted by $\mathcal{F}_{1}$ and $\mathcal{F}_{2}$ from $y$ to $B$
and from $z$ to $C$ in $D[1]$ and $D[2]$, respectively.
Let
$$
\begin{aligned}
P_{i}=\mathcal{F}[x,x_{1i}]x_{1i}x_{1i}^{\prime}U_{1i}[x_{1i}^{\prime},y_{1i}^{\prime}]
y_{1i}^{\prime}y_{1i}\mathcal{F}_{1}[y_{1i},y]y\mathcal{F}_{1}[y,\bar{y}_{1i}]\bar{y}_{1i}\bar{y}_{1i}^{\prime}
\bar{U}_{1i}[\bar{y}_{1i}^{\prime},z_{1i}^{\prime}]z_{1i}^{\prime}z_{1i}\mathcal{F}_{2}[z_{1i},z]~&(1),\\
\tilde{P}_{j}=\mathcal{F}[x,x_{2j}]x_{2j}x_{2j}^{\prime}U_{2j}[x_{2j}^{\prime},z_{2j}^{\prime}]
z_{2j}^{\prime}z_{2j}\mathcal{F}_{2}[z_{2j},z]z\mathcal{F}_{2}[z,\bar{z}_{2j}]
\bar{z}_{2j}\bar{z}_{2j}^{\prime}\bar{U}_{2j}[\bar{z}_{2j}^{\prime},y_{2j}^{\prime}]y_{2j}^{\prime}y_{2j}
\mathcal{F}_{1}[y_{2j},y]~&(2),\\
\hat{P}_{\ell}=\mathcal{F}_{1}[y,y_{3\ell}]y_{3\ell}y_{3\ell}^{\prime}U_{3\ell}[y_{3\ell}^{\prime},x_{3\ell}^{\prime}]
x_{3\ell}^{\prime}x_{3\ell}\mathcal{F}[x_{3\ell},x]x\mathcal{F}[x,\bar{x}_{3\ell}]
\bar{x}_{3\ell}\bar{x}_{3\ell}^{\prime}\bar{U}_{3\ell}[\bar{x}_{3\ell}^{\prime},z_{3\ell}^{\prime}]z_{3\ell}^{\prime}z_{3\ell}
\mathcal{F}_{2}[z_{3\ell},z]~&(3)
\end{aligned}
$$
with $1\leq i\leq r,1\leq j\leq s$ and $1\leq\ell\leq t$.

$\mathbf {Part~I}$. $n\equiv 0($mod~$2)$ and $k\equiv 1($mod~$4)$,
or $n\equiv 1($mod~$2)$ and $k\equiv 3($mod~$4)$.

When $n\equiv 0($mod~$2)$ and $k\equiv 1($mod~$4)$,
or $n\equiv 1($mod~$2)$ and $k\equiv 3($mod~$4)$,
then $\lfloor\frac{2n+3(k-1)}{4}\rfloor=\lfloor\frac{2n+3k}{4}\rfloor$.
Thus $P_{1},\ldots,P_{r},\tilde{P}_{1},\ldots,\tilde{P}_{s},\hat{P}_{1},\ldots,\hat{P}_{t}$
are our desired ID$S$-paths in $D_{k,n}$ (see Figure~\ref{Fig.9}).

\begin{figure}[ht]
\begin{center}
\scalebox{0.5}[0.5]{\includegraphics{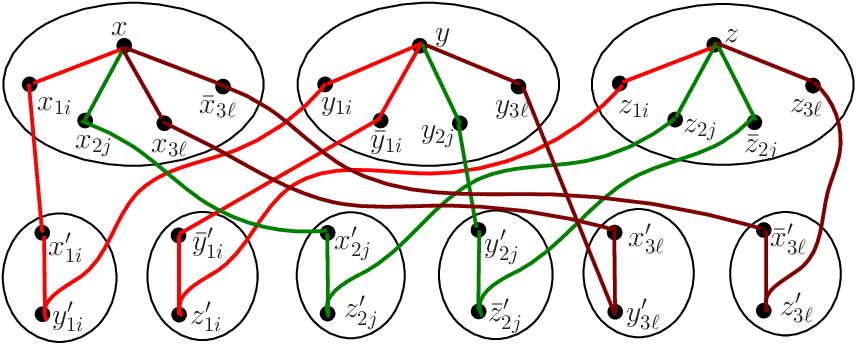}}\\
\captionsetup{font={small}}
\caption{}\label{Fig.9}
\end{center}
\end{figure}

$\mathbf {Part~II}$. $n\equiv 0($mod~$2)$ and $k\not\equiv 1($mod~$4)$,
or $n\equiv 1($mod~$2)$ and $k \not\equiv 3($mod~$4)$.

When $n\equiv 0($mod~$2)$ and $k\not\equiv 1($mod~$4)$,
or $n\equiv 1($mod~$2)$ and $k \not\equiv 3($mod~$4)$,
then $\lfloor\frac{2n+3(k-1)}{4}\rfloor+1=\lfloor\frac{2n+3k}{4}\rfloor$.
Recall that
$|(N_{D[0]}(x)\cup N_{D[0]}(u)\cup N_{D[0]}(v))\backslash V(\mathcal{P})|\geq 2$ as Lemma \ref{thm21}~(2).
Let $\bm{H=D_{k,n}[\bigcup_{q=2r+2s+2t+3}^{t_{k-1,n}}V(D[q])]}$.
Then $H$ is $2$-connected as Lemma \ref{lem5}.
\\
\indent
According to the locations of the $k$-dimensional neighbours $x^{\prime},y^{\prime}$ and $z^{\prime}$ of $x,y$ and $z$,
the following cases are considered:
$\mathbf {Case~1}$: $\{x^{\prime},y^{\prime},z^{\prime}\}\subseteq V(H)$.
Each of the remaining cases has at least one vertex in $\{x^{\prime},y^{\prime},z^{\prime}\}$,
say $y^{\prime}$, not in $H$.
Since $y^{\prime}\not\in V(H_{0})$, where $H_{0}=D_{k,n}[\bigcup_{i=3}^{2r+2s+2t+2}V(D[i])]$,
one has that $y^{\prime}\in V(D[0])\cup V(D[2])$.
Without loss of generality,
assume that $y^{\prime}\in V(D[0])$.
Furthermore, we consider whether $x^{\prime}$ is in $D[2]$.
Finally, if $x^{\prime}\not\in V(D[2])$, we consider whether $z^{\prime}$ is in $D[0]\cup D[1]$.
As a result, the following three cases need to be considered except Case $1$. That is
$\mathbf {Case~2}$: $y^{\prime}\in V(D[0]),x^{\prime}\not\in V(D[2])$ and $z^{\prime}\not\in V(D[0]\cup D[1])$.
$\mathbf {Case~3}$: $y^{\prime}\in V(D[0]),x^{\prime}\not\in V(D[2])$ and $z^{\prime}\in V(D[0]\cup D[1])$.
And $\mathbf {Case~4}$: $y^{\prime}\in V(D[0])$ and $x^{\prime}\in V(D[2])$.

\indent
$\mathbf {Case~1}$. $\{x^{\prime},y^{\prime},z^{\prime}\}\subseteq V(H)$.

Recall that $(N_{D[0]}(x)\cup N_{D[0]}(u)\cup N_{D[0]}(v))\backslash V(\mathcal{P})\neq \emptyset$,
without loss of generality, assume that $N_{D[0]}(x)\backslash V(\mathcal{P})\neq \emptyset$.
It implies that $r+s+2t=d_{\mathcal{P}}(x)< d_{D[0]}(x)=n+k-2$.
Notice that $|A_{1}|=d_{\mathcal{P}}(x)=r+s+2t$
and $|A|=|A_{1}|+|A_{2}|=n+k-2$.
Choose $x_{1}$ in $D[0]$ such that
$x_{1}^{\prime}\in V(H)$ and let $x_{1}\in A_{2}$.
Let $D_{1}=\{a,b\}=\{x_{1}^{\prime},x^{\prime}\}$
and $D_{2}=\{c,d\}=\{y^{\prime},z^{\prime}\}$.
Then there are two internally disjoint $(D_{1},D_{2})$-paths, say $\mathcal{D}_{1}[c,a]$
and $\mathcal{D}_{2}[b,d]$, in $H$ as $H$ is 2-connected.
Let $\hat{P}_{t+1}=yy^{\prime}\mathcal{D}_{1}[y^{\prime},x_{1}^{\prime}]x_{1}^{\prime}x_{1}\mathcal{F}[x_{1},x]
xx^{\prime}\mathcal{D}_{2}[x^{\prime},z^{\prime}]z^{\prime}z$ if $a=x_{1}^{\prime}$ and $c=y^{\prime}$
(see Figure~\ref{Fig.10}$~(a)$).
Otherwise, $\hat{P}_{t+1}=yy^{\prime}\mathcal{D}_{1}[y^{\prime},x^{\prime}]x^{\prime}x\mathcal{F}[x,x_{1}]
x_{1}x_{1}^{\prime}
\mathcal{D}_{2}[x_{1}^{\prime},z^{\prime}]z^{\prime}z$.
Then $P_{1},\ldots,P_{r},\tilde{P}_{1},\ldots,\tilde{P}_{s},\hat{P}_{1},$
$\ldots,\hat{P}_{t}$ and $\hat{P}_{t+1}$
are our desired ID$S$-paths in $D_{k,n}$.

\begin{figure}[ht]
\begin{center}
\scalebox{0.5}[0.5]{\includegraphics{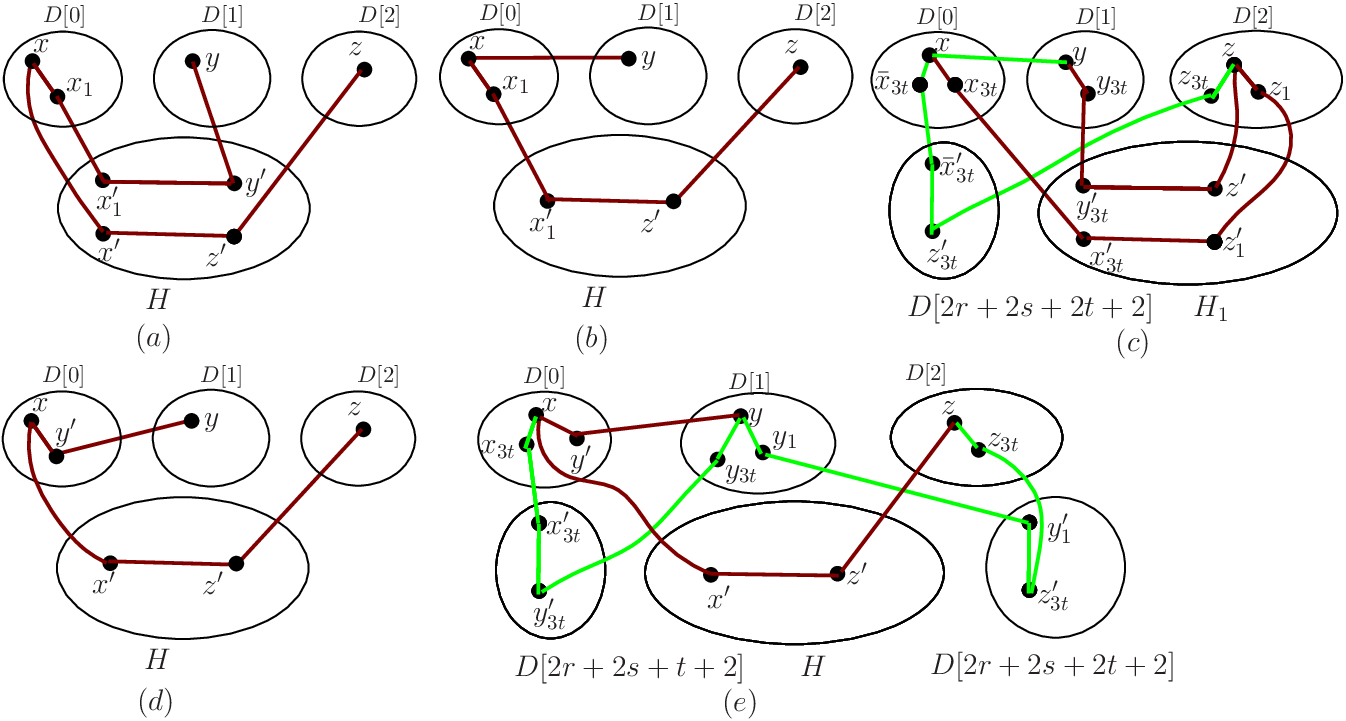}}\\
\captionsetup{font={small}}
\caption{The illustrations of Case~$1$ and Case~$2$ in Claim~2}\label{Fig.10}
\end{center}
\end{figure}

$\mathbf {Case~2}$. $y^{\prime}\in V(D[0]),x^{\prime}\not\in V(D[2])$ and $z^{\prime}\not\in V(D[0]\cup D[1])$.

As $z^{'}\not \in V(D[0]\cup D[1]\cup H_{0})$,
by Lemma \ref{lem1}~$(3)$,
$z^{\prime}\in V(H)$.

$\mathbf {Case~2.1}$. $x=y^{\prime}$.

\indent
Recall that $(N_{D[0]}(x)\cup N_{D[0]}(u)\cup N_{D[0]}(v))\backslash V(\mathcal{P})\neq \emptyset$.
If $(N_{D[0]}(x)\cup N_{D[0]}(u))\backslash V(\mathcal{P})\neq \emptyset$,
without loss of generality, assume that $N_{D[0]}(x)\backslash V(\mathcal{P})\neq \emptyset$.
Then $r+s+2t=|A_{1}|=d_{\mathcal{P}}(x)< n+k-2$.
Choose $x_{1}$ in $D[0]$ such that
$x_{1}^{\prime}\in V(H)$ and let $x_{1}\in A_{2}$.
Then there is a $(x_{1}^{\prime},z^{\prime})$-path, say $L[x_{1}^{\prime},z^{\prime}]$, in $H$.
Let $\hat{P}_{t+1}=yx\mathcal{F}[x,x_{1}]x_{1}x_{1}^{\prime}L[x_{1}^{\prime},z^{\prime}]z^{\prime}z$
(see Figure~\ref{Fig.10}$~(b)$).
Then $P_{1},\ldots,P_{r},\tilde{P}_{1},\ldots,\tilde{P}_{s},\hat{P}_{1},\ldots,\hat{P}_{t}$ and $\hat{P}_{t+1}$
are our desired ID$S$-paths in $D_{k,n}$.
\\
\indent
If $N_{D_{0}}(v)\backslash V(\mathcal{P})\neq\emptyset$,
then $|C_{1}|< n+k-2$.
Choose $z_{1}$ in $D[2]$
such that $z_{1}^{\prime}\in V(D[2r+2s+t+2])$ and let $z_{1}\in C_{2}$.
We will delete the path $\hat{P}_{t}$ and find two other paths.
Let $H_{1}=D_{k,n}[V(H)\cup V(D[2r+2s+t+2])]$.
Then $H_{1}$ is $2$-connected as Lemma \ref{lem5}.
Let $D_{3}=\{a,b\}=\{y_{3t}^{\prime},x_{3t}^{\prime}\}$
and $D_{4}=\{c,d\}=\{z^{\prime},z_{1}^{\prime}\}$.
Then there are two internally disjoint $(D_{3},D_{4})$-paths,
say $\mathcal{D}_{3}[c,a]$ and $\mathcal{D}_{4}[b,d]$, in $H_{1}$.
Let $\hat{P}_{t+1}=yx\mathcal{F}[x,\bar{x}_{3t}]\bar{x}_{3t}\bar{x}_{3t}^{\prime}
\bar{U}_{3t}[\bar{x}_{3t}^{\prime},z_{3t}^{\prime}]z_{3t}^{\prime}z_{3t}\mathcal{F}_{2}[z_{3t},z]$
and $\hat{P}_{t+2}=\mathcal{F}_{1}[y,y_{3t}]y_{3t}y_{3t}^{\prime}\mathcal{D}_{4}[y_{3t}^{\prime},z^{\prime}]
z^{\prime}z
\mathcal{F}_{2}[z,z_{1}]z_{1}$$z_{1}^{\prime}$$\mathcal{D}_{3}[z_{1}^{\prime},x_{3t}^{\prime}]
x_{3t}^{\prime}x_{3t}\mathcal{F}[x_{3t},x]$ if $a=x_{3t}^{\prime}$ and $c=z_{1}^{\prime}$ (see Figure~\ref{Fig.10}$~(c)$).
Otherwise, $\hat{P}_{t+2}=\mathcal{F}_{1}[y,y_{3t}]y_{3t}y_{3t}^{\prime}\mathcal{D}_{4}[y_{3t}^{\prime},
z_{1}^{\prime}]
z_{1}^{\prime}z_{1}\mathcal{F}_{2}[z_{1},z]zz^{\prime}
\mathcal{D}_{3}[z^{\prime},x_{3t}^{\prime}]x_{3t}^{\prime}x_{3t}$ $\mathcal{F}[x_{3t},x]$.
Then $P_{1},\ldots,P_{r},\tilde{P}_{1},\ldots,\tilde{P}_{s},\hat{P}_{1},\ldots,\hat{P}_{t-1},\hat{P}_{t+1}$
and $\hat{P}_{t+2}$
are our desired ID$S$-paths in $D_{k,n}$.

$\mathbf {Case~2.2}$. $x\neq y^{\prime}$.

Since $y^{\prime}\in V(D[0])$ and $x\neq y^{\prime}$,
$x^{\prime}\not\in V(D[1])$ as Lemma \ref{lem1}~$(3)$.
Notice that $x^{\prime}\not\in V(D[1]\cup D[2]\cup H_{0})$.
Then $x^{\prime}\in V(H)$.
Since $H$ is connected, there is a $(x^{\prime},z^{\prime})$-path, say $L[x^{\prime},z^{\prime}]$, in $H$.
Recall that $(N_{D[0]}(x)\cup N_{D[0]}(u)\cup N_{D[0]}(v))\backslash V(\mathcal{P})\neq\emptyset$.
If $N_{D[0]}(x)\backslash V(\mathcal{P})\neq\emptyset$,
then $|A_{1}|< n+k-2$.
Select $x_{1}$ in $D[0]$ such that $x_{1}=y^{\prime}$ and let $x_{1}\in A_{2}$.
Let $\hat{P}_{t+1}=yy^{\prime}\mathcal{F}[y^{\prime},x]xx^{\prime}L[x^{\prime},z^{\prime}]z^{\prime}z$
(see Figure~\ref{Fig.10}$~(d)$).
Then $P_{1},\ldots,P_{r},\tilde{P}_{1},\ldots,\tilde{P}_{s},\hat{P}_{1},\ldots,\hat{P}_{t}$
and $\hat{P}_{t+1}$
are our desired ID$S$-paths in $D_{k,n}$.
\\
\indent
If $(N_{D[0]}(u)\cup N_{D[0]}(v))\backslash V(\mathcal{P})\neq\emptyset$,
without loss of generality, assume that $N_{D[0]}(u)\backslash V(\mathcal{P})\neq\emptyset$.
Then $2r+s+t=|B_{1}|<n+k-2$.
We obtain the desired $S$-paths in $D_{k,n}$ by deleting the path $\hat{P}_{t}$ and constructing
 the two paths $\hat{P}_{t+1}$ and $\hat{P}_{t+2}$.
Choose $y_{1}$ in $D[1]$ such that $y_{1}^{\prime}\in V(D[2r+2s+2t+2])$ and let $y_{1}\in B_{2}$.
Let $\bar{U}_{3t}^{\prime}$ be the $(y_{1}^{\prime},z_{3t}^{\prime})$-path in $D[2r+2s+2t+2]$
as $D[2r+2s+2t+2]$ is connected
and
let $A^{\prime}=A\backslash\{\bar{x}_{3t}\}\cup \{y^{\prime}\}$.
By Fan Lemma \ref{lem4},
there is a fan, say $\mathcal{F}^{\prime}$, in $D[0]$ from $x$ to $A^{\prime}$.
Let $P_{i}^{\prime}$ be the path obtained from $P_{i}$ shown in formula $(1)$ by replacing $\mathcal{F}[x,x_{1i}]$
with $\mathcal{F}^{\prime}[x,x_{1i}]$ for $1\leq i\leq r$;
$\tilde{P}_{j}^{\prime}$ be the path obtained from $\tilde{P}_{j}$ shown in formula $(2)$
by replacing $\mathcal{F}[x,x_{2j}]$ with $\mathcal{F}^{\prime}[x,x_{2j}]$ for $1\leq j \leq s$;
$\hat{P}_{f}^{\prime}$ be the path obtained from $\hat{P}_{f}$ shown in formula $(3)$
by replacing $\mathcal{F}[x_{3f},x]$ and $\mathcal{F}[x,\bar{x}_{3f}]$
with $\mathcal{F}^{\prime}[x_{3f},x]$ and $\mathcal{F}^{\prime}[x,\bar{x}_{3f}]$,
respectively, for $1\leq f \leq t-1$.
Let $\hat{P}_{t+1}=\mathcal{F}^{\prime}[x,x_{3t}]x_{3t}x_{3t}^{\prime}U_{3t}[x_{3t}^{\prime},y_{3t}^{\prime}]
y_{3t}^{\prime}y_{3t}\mathcal{F}_{1}[y_{3t},y]y\mathcal{F}_{1}[y,y_{1}]y_{1}y_{1}^{\prime}
\bar{U}_{3t}^{\prime}[y_{1}^{\prime},z_{3t}^{\prime}]z_{3t}^{\prime}z_{3t}\mathcal{F}_{2}[z_{3t},z]$
and $\hat{P}_{t+2}=yy^{\prime}\mathcal{F}^{\prime}[y^{\prime},x]xx^{\prime}L[x^{\prime},z^{\prime}]z^{\prime}z$
(see Figure~\ref{Fig.10}$~(e)$).
Then $P_{1}^{\prime},\ldots,P_{r}^{\prime},\tilde{P}_{1}^{\prime},\ldots,\tilde{P}_{s}^{\prime},
\hat{P}_{1}^{\prime},\ldots,$ $\hat{P}_{t-1}^{\prime},\hat{P}_{t+1}$
and $\hat{P}_{t+2}$
are our desired ID$S$-paths in $D_{k,n}$.

$\mathbf {Case~3}$. $y^{\prime}\in V(D[0]),x^{\prime}\not\in V(D[2])$ and $z^{\prime}\in V(D[0]\cup D[1])$.

$\mathbf {Case~3.1}$. $y^{\prime}=x$.

Without loss of generality, assume that $z^{\prime}\in V(D[0])$.
Recall that $(N_{D[0]}(x)\cup
N_{D[0]}(u)\cup N_{D[0]}(v))\backslash V(\mathcal{P})\neq\emptyset$.
If $N_{D[0]}(x)\backslash V(\mathcal{P})\neq\emptyset$,
then $|A_{1}|=r+s+2t=d_{\mathcal{P}}(x)< d_{D[0]}(x)=n+k-2$.
Choose $x_{1}$ in $D[0]$ such that $x_{1}=z^{\prime}$ and let $x_{1}\in A_{2}$.
Let $\hat{P}_{t+1}=yx\mathcal{F}[x,z^{\prime}]z^{\prime}z$.
Then $P_{1},\ldots,P_{r},\tilde{P}_{1},\ldots,\tilde{P}_{s},\hat{P}_{1},\ldots,\hat{P}_{t}$
and $\hat{P}_{t+1}$ are our desired ID$S$-paths in $D_{k,n}$.
\\
\indent
If $N_{D[0]}(u)\backslash V(\mathcal{P})\neq\emptyset$
(resp.~$N_{D[0]}(v)\backslash V(\mathcal{P})\neq\emptyset$),
similarly, $|B_{1}|< n+k-2$ (resp.~$|C_{1}|< n+k-2$).
We will delete the path $\hat{P}_{t}$ and find two paths $\hat{P}_{t+1}$ and $\hat{P}_{t+2}$.
Since the discussions are similar, we only consider $N_{D[0]}(u)\backslash V(\mathcal{P})\neq\emptyset$.
Choose $y_{1}$ in $D[1]$ such that $y_{1}^{\prime}\in V(D[2r+2s+2t+2])$ and let $y_{1}\in B_{2}$.
Let $\bar{U}_{3t}^{\prime}$ be the $(y_{1}^{\prime},z_{3t}^{\prime})$-path in $D[2r+2s+2t+2]$
as $D[2r+2s+2t+2]$ is connected and
let $A^{\prime}=A\backslash \{\bar{x}_{3t}\}\cup \{z^{\prime}\}$.
By Fan Lemma \ref{lem4},
there is a fan $\mathcal{F}^{\prime}$ from $x$ to $A^{\prime}$ which contains
$n+k-2$ internally disjoint $(x,A^{\prime})$-paths in $D[0]$.
Let $\hat{P}_{t+1}=\mathcal{F}^{\prime}[x,x_{3t}]x_{3t}x_{3t}^{\prime}U_{3t}[x_{3t}^{\prime},y_{3t}^{\prime}]
y_{3t}^{\prime}y_{3t}\mathcal{F}_{1}[y_{3t},y]y\mathcal{F}_{1}[y,y_{1}]y_{1}y_{1}^{\prime}
\bar{U}_{3t}^{\prime}[$
$y_{1}^{\prime},z_{3t}^{\prime}]z_{3t}^{\prime}z_{3t}\mathcal{F}_{2}[z_{3t},z]$
and $\hat{P}_{t+2}=yx\mathcal{F}^{\prime}[x,z^{\prime}]z^{\prime}z$ (see Figure~\ref{Fig.11}$~(a)$).
Then $P_{1}^{\prime},\ldots,P_{r}^{\prime},\tilde{P}_{1}^{\prime},
\ldots,\tilde{P}_{s}^{\prime},\hat{P}_{1}^{\prime},\ldots,\hat{P}_{t-1}^{\prime},\hat{P}_{t+1}$
and $\hat{P}_{t+2}$
are $\lfloor\frac{2n+3k}{4}\rfloor$ ID$S$-paths in $D_{k,n}$,
where $P_{1}^{\prime},\ldots,P_{r}^{\prime},\tilde{P}_{1}^{\prime},\ldots,
\tilde{P}_{s}^{\prime},\hat{P}_{1}^{\prime},$
$\ldots,\hat{P}_{t-1}^{\prime}$ are respectively obtained from
$P_{1},\ldots,P_{r},\tilde{P}_{1},\ldots,
\tilde{P}_{s},\hat{P}_{1},$
$\ldots,\hat{P}_{t-1}$ shown
in formulas $(1),(2)$ and $(3)$ by replacing the paths in the fan $\mathcal{F}$ with
the corresponding paths in the fan $\mathcal{F}^{\prime}$.

\begin{figure}[ht]
\begin{center}
\scalebox{0.5}[0.5]{\includegraphics{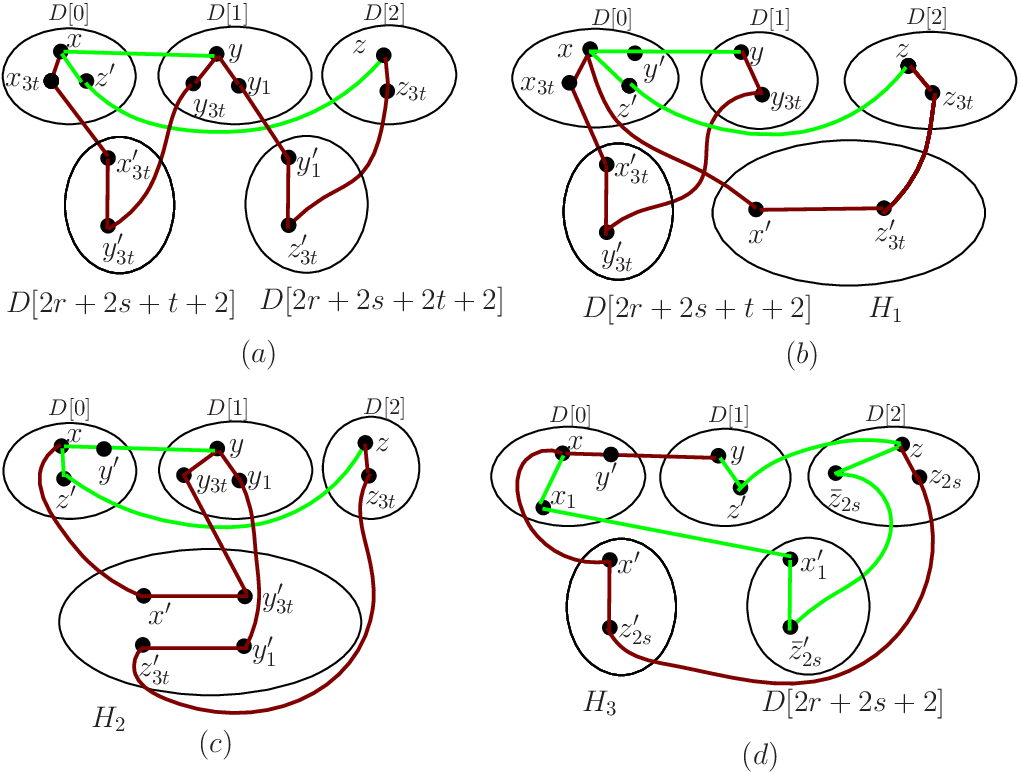}}\\
\captionsetup{font={small}}
\caption{The illustration of Case~$3$ in Claim~2}\label{Fig.11}
\end{center}
\end{figure}

$\mathbf {Case~3.2}$. $y^{\prime}\neq x$ and $z^{\prime}\in V(D[0])$.

Since $x^{\prime}\not\in V(D[2]\cup H_{0})$, $y^{\prime}\in V(D[0])$ and $y^{\prime}\neq x$,
one has that $x^{\prime}\in V(H)$ as Lemma \ref{lem1}~$(3)$.
Recall that $(N_{D[0]}(x)\cup
N_{D[0]}(u)\cup N_{D[0]}(v))\backslash V(\mathcal{P})\neq\emptyset$.
\\
\indent
If $N_{D[0]}(x)\backslash V(\mathcal{P})\neq\emptyset$,
then $|A_{1}|< n+k-2$.
Select $x_{1}$ in $D[0]$ such that $x_{1}=y^{\prime}$ and let $x_{1}\in A_{2}$.
Let $A^{\prime}=A\backslash \{\bar{x}_{3t}\}\cup \{z^{\prime}\}$,
by Fan Lemma \ref{lem4}, there is a fan $\mathcal{F}^{\prime}$
in $D[0]$ from $x$ to $A^{\prime}$.
Let $H_{1}=D_{k,n}[V(H)\cup V(D[2r+2s+2t+2])]$.
Then $H_{1}$ is connected.
Since $x^{\prime},z_{3t}^{\prime}\in V(H_{1})$,
there exists a path connecting $(x^{\prime},z_{3t}^{\prime})$, denoted by $L[x^{\prime},z_{3t}^{\prime}]$, in $H_{1}$.
Let $\hat{P}_{t+1}=\mathcal{F}_{1}[y,y_{3t}]y_{3t}y_{3t}^{\prime}U_{3t}[y_{3t}^{\prime},x_{3t}^{\prime}]
x_{3t}^{\prime}x_{3t}\mathcal{F}^{\prime}[x_{3t},x]xx^{\prime}L[x^{\prime},z_{3t}^{\prime}]z_{3t}^{\prime}z_{3t}
\mathcal{F}_{2}[z_{3t},z]$
and $\hat{P}_{t+2}=yy^{\prime}\mathcal{F}^{\prime}[y^{\prime},x]x\mathcal{F}^{\prime}[x,z^{\prime}]z^{\prime}z$
 (see Figure~\ref{Fig.11}$~(b)$).
Then $P_{1}^{\prime},\ldots,P_{r}^{\prime},\tilde{P}_{1}^{\prime},
\ldots,\tilde{P}_{s}^{\prime},\hat{P}_{1}^{\prime},\ldots,\hat{P}_{t-1}^{\prime},\hat{P}_{t+1}$
and $\hat{P}_{t+2}$
are $\lfloor\frac{2n+3k}{4}\rfloor$ ID$S$-paths in $D_{k,n}$,
where $P_{1}^{\prime},\ldots,P_{r}^{\prime},\tilde{P}_{1}^{\prime},
\ldots,\tilde{P}_{s}^{\prime},\hat{P}_{1}^{\prime},\ldots,\hat{P}_{t-1}^{\prime}$ are respectively
obtained from
$P_{1},\ldots,P_{r},\tilde{P}_{1},\ldots,\tilde{P}_{s},\hat{P}_{1},\ldots,\hat{P}_{t-1}$ shown
in formulas $(1),(2)$ and $(3)$ by replacing the paths in the fan $\mathcal{F}$ with
the corresponding paths in the fan $\mathcal{F}^{\prime}$.
\\
\indent
If $(N_{D[0]}(u)\cup N_{D[0]}(v))\backslash V(\mathcal{P})\neq\emptyset$,
without loss of generality, assume that $N_{D[0]}(u)\backslash V($
$\mathcal{P})\neq\emptyset$,
then $|B_{1}|< n+k-2$.
Choose $y_{1}$ in $D[1]$ such that $y_{1}^{\prime}\in V(H)$ and let $y_{1}\in B_{2}$.
Let $A^{\prime}=A\backslash\{\bar{x}_{3t},x_{3t}\}\cup \{z^{\prime},y^{\prime}\}$.
Then there is a fan, denoted by $\mathcal{F}^{\prime}$, in $D[0]$ from $x$ to $A^{\prime}$ as Fan Lemma \ref{lem4}.
Let $H_{2}=D_{k,n}[V(H)\cup V(D[2r+2s+t+2])\cup V(D[2r+2s+2t+2])],
D_{5}=\{a,b\}=\{x^{\prime},z_{3t}^{\prime}\}$ and $D_{6}=\{c,d\}=\{y_{3t}^{\prime},y_{1}^{\prime}\}$.
Notice that $D_{5},D_{6}\subseteq V(H_{2})$.
Then there are two internally disjoint $(D_{5},D_{6})$-paths,
say $\mathcal{D}_{5}[c,a]$ and $\mathcal{D}_{6}[b,d]$,
in $H_{2}$ as $H_{2}$ is $2$-connected.
Let $\hat{P}_{t+1}=yy^{\prime}\mathcal{F}^{\prime}[y^{\prime},x]x\mathcal{F}^{\prime}
[x,z^{\prime}]z^{\prime}z$
and $\hat{P}_{t+2}=xx^{\prime}\mathcal{D}_{6}[x^{\prime},y_{3t}^{\prime}]y_{3t}^{\prime}y_{3t}\mathcal{F}_{1}[y_{3t},y]
y\mathcal{F}_{1}[y,y_{1}]y_{1}y_{1}^{\prime}\mathcal{D}_{5}[y_{1}^{\prime},z_{3t}^{\prime}]
z_{3t}^{\prime}z_{3t}$ $\mathcal{F}_{2}[z_{3t},z]$ if $a=z_{3t}^{\prime}$ and $c=y_{1}^{\prime}$
(see Figure~\ref{Fig.11}$~(c)$).
Otherwise, $\hat{P}_{t+2}=xx^{\prime}\mathcal{D}_{6}[x^{\prime},y_{1}^{\prime}]y_{1}^{\prime}y_{1}$
$\mathcal{F}_{1}[y_{1},y]y\mathcal{F}_{1}[y,y_{3t}]y_{3t}y_{3t}^{\prime}\mathcal{D}_{5}$$[y_{3t}^{\prime},z_{3t}^{\prime}]
z_{3t}^{\prime}z_{3t}\mathcal{F}_{2}[z_{3t},z]$.
Then $P_{1}^{\prime},\ldots,P_{r}^{\prime},\tilde{P}_{1}^{\prime},
\ldots,\tilde{P}_{s}^{\prime},\hat{P}_{1}^{\prime},\ldots,\hat{P}_{t-1}^{\prime},$ $\hat{P}_{t+1}$
and $\hat{P}_{t+2}$
are $\lfloor\frac{2n+3k}{4}\rfloor$ ID$S$-paths in $D_{k,n}$,
where $P_{1}^{\prime},\ldots,P_{r}^{\prime},\tilde{P}_{1}^{\prime},
\ldots,\tilde{P}_{s}^{\prime},\hat{P}_{1}^{\prime},\ldots,\hat{P}_{t-1}^{\prime}$ are respectively
obtained from
$P_{1},\ldots,P_{r},\tilde{P}_{1},\ldots,\tilde{P}_{s},\hat{P}_{1},\ldots,\hat{P}_{t-1}$ shown
in formulas $(1),(2)$ and $(3)$ by replacing the paths in the fan $\mathcal{F}$ with
the corresponding paths in the fan $\mathcal{F}^{\prime}$.

$\mathbf {Case~3.3}$. $y^{\prime}\neq x$ and $z^{\prime}\in V(D[1])$.

Similar to Case~$3.2$,
one has that $x^{\prime}\in V(H)$.
Recall that $|(N_{D[0]}(x)\cup N_{D[0]}(u)\cup N_{D[0]}(v))\backslash V(\mathcal{P})|\geq 2$.
\\
\indent
If $|N_{D[0]}(x)\backslash V(\mathcal{P})|\geq 2$
(resp.~$|N_{D[0]}(u)\backslash V(\mathcal{P})|\geq 2,$ or
$|N_{D[0]}(v)\backslash V(\mathcal{P})|\geq 2$),
then $|A_{1}|\leq n+k-4$
(resp.~$|B_{1}|\leq n+k-4$, or $|C_{1}|\leq n+k-4$).
We will delete the path $\tilde{P}_{s}$ (resp.~$\hat{P}_{t},$ or $P_{r}$)
and find two paths $\tilde{P}_{s+1}$ and $\tilde{P}_{s+2}$
(resp.~$\hat{P}_{t+1}$ and $\hat{P}_{t+2}$, or $P_{r+1}$ and $P_{r+2}$).
Since the discussions are similar, we only consider
$|N_{D[0]}(x)\backslash V(\mathcal{P})|\geq 2$.
Choose $x_{1}$ and $x_{2}$ in $D[0]$
such that $x_{1}^{\prime}\in V(D[2r+2s+2]),x_{2}^{\prime}=y^{\prime}$ and let $x_{1},x_{2}\in A_{2}$.
As $D[2r+2s+2]$ is connected, there is a $(x_{1}^{\prime},\bar{z}_{2s}^{\prime})$-path,
denoted by $\bar{U}_{2s}^{\prime}$, in $D[2r+2s+2]$.
Let $H_{3}=D_{k,n}[V(H)\cup V(D[2r+s+2])]$.
Then there is a $(x^{\prime},z_{2s}^{\prime})$-path, denoted by $L[x^{\prime},z_{2s}^{\prime}]$, in $H_{3}$.
Let $\tilde{P}_{s+1}=yy^{\prime}\mathcal{F}[y^{\prime},x]xx^{\prime}L[x^{\prime},z_{2s}^{\prime}]
z_{2s}^{\prime}z_{2s}\mathcal{F}_{2}[z_{2s},z]$ and
$\tilde{P}_{s+2}=\mathcal{F}[x,x_{1}]x_{1}x_{1}^{\prime}\bar{U}_{2s}^{\prime}
[x_{1}^{\prime},\bar{z}_{2s}^{\prime}]\bar{z}_{2s}^{\prime}\bar{z}_{2s}\mathcal{F}_{2}[\bar{z}_{2s},z]
zz^{\prime}$ $\mathcal{F}_{1}[z^{\prime},y]$ (see Figure~\ref{Fig.11}$~(d)$).
Then $P_{1},\ldots,$ $P_{r},\tilde{P}_{1},\ldots,\tilde{P}_{s-1},\tilde{P}_{s+1},\tilde{P}_{s+2},
\hat{P}_{1},\ldots,\hat{P}_{t}$
are $\lfloor\frac{2n+3k}{4}\rfloor$ ID$S$-paths in $D_{k,n}$.
\\
\indent
If each of $\{|N_{D[0]}(x)\backslash V(\mathcal{P})|,|N_{D[0]}(u)\backslash V(\mathcal{P})|,
|N_{D[0]}(v)\backslash V(\mathcal{P})|\}$ is at most $1$,
as $|N_{D[0]}(x)\cup N_{D[0]}(u)\cup N_{D[0]}(v)\backslash V(\mathcal{P})|\geq 2$,
then at least two of them are $1$.
\\
\indent
If $|N_{D[0]}(x)\backslash V(\mathcal{P})|=1$ and $|N_{D[0]}(u)\backslash V(\mathcal{P})|=1$,
then $|A_{1}|\leq n+k-3$ and $|B_{1}|\leq n+k-3$.
Choose $x_{1}$ in $D[0]$ and $y_{1}$ in $D[1]$ such that $x_{1}=y^{\prime},y_{1}=z^{\prime}$
and let $x_{1}\in A_{2}$ and $y_{1}\in B_{2}$.
Let $\hat{P}_{t+1}=\mathcal{F}[x,y^{\prime}]y^{\prime}y\mathcal{F}_{1}[y,z^{\prime}]z^{\prime}z$.
Then $P_{1},\ldots,P_{r},\tilde{P}_{1},\ldots,\tilde{P}_{s},\hat{P}_{1},\ldots,\hat{P}_{t}$
and $\hat{P}_{t+1}$
are $\lfloor\frac{2n+3k}{4}\rfloor$ ID$S$-paths in $D_{k,n}$.
\\
\indent
If $|N_{D[0]}(x)\backslash V(\mathcal{P})|=1$ and $|N_{D[0]}(v)\backslash V(\mathcal{P})|=1$,
or $|N_{D[0]}(u)\backslash V(\mathcal{P})|=1$ and $|N_{D[0]}(v)\backslash$ $V(\mathcal{P})|=1$,
without loss of generality, assume that
$|N_{D[0]}(x)\backslash V(\mathcal{P})|=1$ and $|N_{D[0]}(v)\backslash V(\mathcal{P})|=1$.
Then $|A_{1}|\leq n+k-3$ and $|C_{1}|\leq n+k-3$.
Choose $x_{1}$ in $D[0]$ and $z_{1}$ in $D[2]$ such that $x_{1}=y^{\prime},z_{1}^{\prime}\in V(H)$
and let $x_{1}\in A_{2}$ and $z_{1}\in C_{2}$.
Then there is a $(x^{\prime},z_{1}^{\prime})$-path, say $L[x^{\prime},z_{1}^{\prime}]$,
in $H$ as $H$ is connected.
Let $\hat{P}_{t+1}=yy^{\prime}\mathcal{F}[y^{\prime},x]
xx^{\prime}L[x^{\prime},z_{1}^{\prime}]z_{1}^{\prime}z_{1}
\mathcal{F}_{2}[z_{1},z]$.
Then $P_{1},\ldots,P_{r},\tilde{P}_{1},\ldots,\tilde{P}_{s},\hat{P}_{1},\ldots,\hat{P}_{t}$
and $\hat{P}_{t+1}$
are $\lfloor\frac{2n+3k}{4}\rfloor$ ID$S$-paths in $D_{k,n}$.

$\mathbf {Case~4}$. $y^{\prime}\in V(D[0])$ and $x^{\prime}\in V(D[2])$.

If $z^{\prime}\not\in V(D[0]\cup D[1])$, we have a similar argument to Case~$3.3$.
If $z^{\prime}\in V(D[0])$, then $x^{\prime}=z$, which is similar to Case~$3.1$.
Next, we consider $z^{\prime}\in V(D[1])$.
\\
\indent
Recall that
$|(N_{D[0]}(x)\cup
N_{D[0]}(u)\cup N_{D[0]}(v))\backslash V(\mathcal{P})|\geq 2$.
If at least one of $\{|N_{D[0]}(x)\backslash V(\mathcal{P})|,$ $|N_{D[0]}(u)\backslash V(\mathcal{P})|,
|N_{D[0]}(v)\backslash V(\mathcal{P})|\}$ is at least $2$,
without loss of generality, assume that
$|N_{D[0]}(x)$ $\backslash V(\mathcal{P})|\geq 2$.
Then $|A_{1}|\leq n+k-4$.
Choose $x_{1}$ in $D[0]$
such that $x_{1}=y^{\prime}$ and let $x_{1}\in A_{2}$.
Let $B^{\prime}=B\backslash\{y_{2s}\}\cup \{z^{\prime}\}$
and $C^{\prime}=C\backslash\{\bar{z}_{2s}\}\cup \{x^{\prime}\}$.
By Fan Lemma \ref{lem4},
there are two fans, say $\mathcal{F}_{1}^{\prime}$ and $\mathcal{F}_{2}^{\prime}$,
in $D[1]$ and $D[2]$, respectively.
Let $\tilde{P}_{s+1}=yy^{\prime}\mathcal{F}[y^{\prime},x]x\mathcal{F}[x,x_{2s}]x_{2s}x_{2s}^{\prime}
U_{2s}[x_{2s}^{\prime},z_{2s}^{\prime}]z_{2s}^{\prime}$ $z_{2s}\mathcal{F}_{2}^{\prime}[z_{2s},z]$
and $\tilde{P}_{s+2}=xx^{\prime}\mathcal{F}_{2}^{\prime}[x^{\prime},z]zz^{\prime}
\mathcal{F}_{1}^{\prime}[z^{\prime},y]$.
Then $P_{1}^{\prime},\ldots,P_{r}^{\prime},\tilde{P}_{1}^{\prime},\ldots,\tilde{P}_{s-1}^{\prime},
\tilde{P}_{s+1},\tilde{P}_{s+2},\hat{P}_{1}^{\prime}$ $,\ldots,\hat{P}_{t}^{\prime}$
are $\lfloor\frac{2n+3k}{4}\rfloor$ ID$S$-paths in $D_{k,n}$,
where $P_{1}^{\prime},\ldots,P_{r}^{\prime},\tilde{P}_{1}^{\prime},
\ldots,\tilde{P}_{s-1}^{\prime},\hat{P}_{1}^{\prime},\ldots,\hat{P}_{t}^{\prime}$ are respectively
obtained from
$P_{1},\ldots,P_{r},\tilde{P}_{1},\ldots,\tilde{P}_{s-1},\hat{P}_{1},\ldots,\hat{P}_{t}$ shown
in formulas $(1),(2)$ and $(3)$ by replacing the paths in the fans $\mathcal{F}_{1}$ and $\mathcal{F}_{2}$ with
the corresponding paths in the fans $\mathcal{F}_{1}^{\prime}$ and $\mathcal{F}_{2}^{\prime}$, respectively.
\\
\indent
If each of $\{|N_{D[0]}(x)\backslash V(\mathcal{P})|,$ $|N_{D[0]}(u)\backslash V(\mathcal{P})|,
|N_{D[0]}(v)\backslash V(\mathcal{P})|\}$ is at most $1$,
by $|(N_{D[0]}(x)\cup
N_{D[0]}(u)\cup N_{D[0]}(v))\backslash V(\mathcal{P})|\geq 2$,
one has that at least two of them are $1$.
Without loss of generality, assume that
$|N_{D[0]}(x)\backslash V(\mathcal{P})|=1$ and $|N_{D[0]}(u)\backslash V(\mathcal{P})|=1$,
then $|A_{1}|\leq n+k-3$ and $|B_{1}|\leq n+k-3$.
Choose $x_{1}$ in $D[0]$ and $y_{1}$ in $D[1]$ such that $x_{1}=y^{\prime},y_{1}=z^{\prime}$
and let $x_{1}\in A_{2}$ and $y_{1}\in B_{2}$.
Let $\hat{P}_{t+1}=\mathcal{F}[x,y^{\prime}]y^{\prime}y\mathcal{F}_{1}[y,z^{\prime}]
z^{\prime}z$.
Then $P_{1},\ldots,P_{r},\tilde{P}_{1},\ldots,\tilde{P}_{s},\hat{P}_{1},\ldots,\hat{P}_{t}$
and $\hat{P}_{t+1}$
are $\lfloor\frac{2n+3k}{4}\rfloor$ ID$S$-paths in $D_{k,n}$.
$\hfill\blacksquare$

\begin{Claim}
There exist at least $ \lfloor \frac{2n+3k}{4}\rfloor$ ID$S$-paths in $D_{k,n}$,
if the vertices of $S$ belong to two distinct copies of $D_{k-1,n}$.
\end{Claim}

\noindent {\it Proof of Claim 3.}
Without loss of generality, assume that $x,y\in V(D[0])$ and $z\in V(D[1])$.
Choose $u\in V(D[0])$ such that $u\not\in N_{D[0]}[x]\cup N_{D[0]}[y]$ and
at least two vertices of $\{x,y,u\}$ belong to two distinct copies of $D_{k-2,n}$.
This can be done as $D[0]\cong D_{k-1,n}$ for $k\geq 2$ is not the completed graph
and $D[0]$ is the union of the copies of $D_{k-2,n}$.
Let $S_{0}=\{x,y,u\}$.
Define $\xi=|N_{D[0]}(x)\cap N_{D[0]}(y)\cap N_{D[0]}(u)|$.
Then $\xi \leq 1$ as Lemma \ref{lem1}~($2$) and Lemma \ref{lem1}~($4$) and the chosen of $u$.
By the inductive hypothesis,
$\pi_{3}(D[0])=\lfloor\frac{2n+3(k-1)}{4}\rfloor$.
Then there is a set $\mathcal{T}$ of $\pi_{3}(D[0])$
ID$S_{0}$-paths in $D[0]$.
When $n\equiv 0($mod~$2)$ and $k\not\equiv 1($mod~$4)$,
or $n\equiv 1($mod~$2)$ and $k\not\equiv 3($mod~$4)$,
by calculation, $d_{D[0]}(x)+d_{D[0]}(y)+d_{D[0]}(u)-4\pi_{3}(D[0])-\xi=
3(n+k-2)-4\times \lfloor\frac{2n+3(k-1)}{4}\rfloor-1\geq 3$.
By Lemma \ref{thm21}~(2),
then there exist at least two vertices, say $v$ and $v_{1}$,
and a set $\mathcal{T}_{0}$ of $\lfloor\frac{2n+3(k-1)}{4}\rfloor$ ID$S_{0}$-paths in $D[0]$
such that $v,v_{1}\in (N_{D[0]}(x)\cup N_{D[0]}(y)\cup N_{D[0]}(u))\backslash V(\mathcal{T}_{0})$.
Without loss of generality, assume that $\mathcal{T}=\mathcal{T}_{0}$ and
$\mathcal{T}=\{T_{1},T_{2},\ldots, T_{h}|h=\lfloor\frac{2n+3(k-1)}{4}\rfloor\}$.
Then $v,v_{1}\in (N_{D[0]}(x)\cup N_{D[0]}(y)\cup N_{D[0]}(u))\backslash V(\mathcal{T})$.
Suppose that $u$ is one of the terminal vertices of $T_{i}$ with $uu_{1i}\in E(T_{i})$ for each $1\leq i\leq r$,
and $u$ is the internal vertex of $T_{r+j}$ with $uu_{2j},u\bar{u}_{2j}\in E(T_{r+j})$ for each $1\leq j\leq s$,
where $r+s=h=\lfloor\frac{2n+3(k-1)}{4}\rfloor$.
Moreover, let $x$ be also the terminal vertex of $T_{r}$.
Without loss of generality, assume that $d_{T_{r+j}}(x,u_{2j})< d_{T_{r+j}}(x,\bar{u}_{2j})$ for each $1\leq j\leq s$.
Since the proofs for $\{u_{1i}^{\prime},u_{2j}^{\prime},\bar{u}_{2j}^{\prime}|1\leq i\leq r,1\leq j\leq s,
r+s=\lfloor\frac{2n+3(k-1)}{4}\rfloor\}\cap V(D[1])\neq\emptyset$ and
$\{u_{1i}^{\prime},u_{2j}^{\prime},\bar{u}_{2j}^{\prime}|1\leq i\leq r,1\leq j\leq s,
r+s=\lfloor\frac{2n+3(k-1)}{4}\rfloor\}\cap V(D[1])=\emptyset$ are similar,
we only prove the latter.
By Lemma \ref{lem1}~$(3)$,
without loss of generality, assume that $u_{1i}^{\prime}\in V(D[i+1]),
u_{2j}^{\prime}\in V(D[r+j+1])$ and $\bar{u}_{2j}^{\prime}\in V(D[r+s+j+1])$
for $1\leq i\leq r$ and $1\leq j\leq s$.
And
let $z_{1i},z_{2j}$ and $\bar{z}_{2j}$ for $1\leq i\leq r$ and $1\leq j\leq s$ be vertices in $D[1]$
such that $z_{1i}^{\prime}\in V(D[i+1]),
z_{2j}^{\prime}\in V(D[r+j+1])$ and $\bar{z}_{2j}^{\prime}\in V(D[r+s+j+1])$.
Denoted by
\begin{center}
$A=\{z_{1i},z_{2j},\bar{z}_{2j}|1\leq i\leq r,1\leq j\leq s,
r+s=\lfloor\frac{2n+3(k-1)}{4}\rfloor\}$ (May be $z\in A$).
\end{center}

\indent
As $r+2s\leq d_{D[0]}(u)\leq n+k-2$,
by Fan Lemma \ref{lem4}, there exists a fan $\mathcal{O}$ which contains
$r+2s$ internally disjoint paths from $z$ to $A$ in $D[1]$
(If $z\in A$,
without loss of generality, assume that $z=z_{1r}$,
then the path $\mathcal{O}[z,z_{1r}]$ is replaced with $z$).
Since $D[t]$ is connected with $t\in \langle t_{k-1,n}+1\rangle$,
there exists one path connecting any two distinct vertices in $D[t]$.
Suppose that $R_{1i},R_{2j}$ and $\bar{R}_{2j}$ are the $(u_{1i}^{\prime},z_{1i}^{\prime})$-path,
$(u_{2j}^{\prime},z_{2j}^{\prime})$-path and $(\bar{u}_{2j}^{\prime},\bar{z}_{2j}^{\prime})$-path
in $D[i+1],D[r+j+1]$ and $D[r+s+j+1]$, respectively,
for $1\leq i\leq r$ and $1\leq j\leq s$.
Let
$$
\begin{aligned}
P_{i}=T_{i}[a,u_{1i}]u_{1i}u_{1i}^{\prime}R_{1i}[u_{1i}^{\prime},z_{1i}^{\prime}]z_{1i}^{\prime}z_{1i}
\mathcal{O}[z_{1i},z]~~~~~~~~~~~~~
~~~~~~~~~~~~~~~~~~~~~~~~~~~~~&(4),\\
\tilde{P}_{j}=T_{r+j}[x,u_{2j}]u_{2j}u_{2j}^{\prime}R_{2j}[u_{2j}^{\prime},
z_{2j}^{\prime}]z_{2j}^{\prime}z_{2j}
\mathcal{O}[z_{2j},z]z\mathcal{O}[z,\bar{z}_{2j}]\bar{z}_{2j}\bar{z}_{2j}^{\prime}
\bar{R}_{2j}[\bar{z}_{2j}^{\prime},\bar{u}_{2j}^{\prime}]\bar{u}_{2j}^{\prime}\bar{u}_{2j}
T_{r+j}[\bar{u}_{2j},y]~&(5)
\end{aligned}
$$
with $1\leq i\leq r,1\leq j\leq s$ and $r+s=\lfloor\frac{2n+3(k-1)}{4}\rfloor$,
where $a=x$ if $d_{T_{i}}(x)=1$, otherwise, $a=y$,
and for $\mu,\nu\in T_{\varepsilon}$, let
$T_{\varepsilon}[\mu,\nu]$ be the subpath of $T_{\varepsilon}$ which connects $\mu$ and $\nu$ for $\varepsilon\in [r+s]$.

$\mathbf {Part~I}$. $n\equiv 0($mod~$2)$ and $k\equiv 1($mod~$4)$,
or $n\equiv 1($mod~$2)$ and $k\equiv 3($mod~$4)$.

If $n\equiv 0($mod~$2)$ and $k\equiv 1($mod~$4)$,
or $n\equiv 1($mod~$2)$ and $k\equiv 3($mod~$4)$,
then $\lfloor\frac{2n+3(k-1)}{4}\rfloor=\lfloor\frac{2n+3(k)}{4}\rfloor$.
Hence $P_{1},\ldots,P_{r},\tilde{P}_{1},\ldots,\tilde{P}_{s}$ are our desired
ID$S$-paths in $D_{k,n}$, which are represented by distinct colors
(see Figure~\ref{Fig.5}~$(a)$).

$\mathbf {Part~II}$. $n\equiv 0($mod~$2)$ and $k\not\equiv 1($mod~$4)$,
or $n\equiv 1($mod~$2)$ and $k\not\equiv 3($mod~$4)$.

If $n\equiv 0($mod~$2)$ and $k\not\equiv 1($mod~$4)$,
or $n\equiv 1($mod~$2)$ and $k\not\equiv 3($mod~$4)$,
then $\lfloor\frac{2n+3(k-1)}{4}\rfloor+1=\lfloor\frac{2n+3k}{4}\rfloor$.
Thus we need to find one more desired $S$-path in $D_{k,n}$.
Recall that $v,v_{1}\in (N_{D[0]}(x)\cup N_{D[0]}(y)\cup N_{D[0]}(u))\backslash V(\mathcal{T})$.
Let
$\bm{H=D_{k,n}[\bigcup _{\ell=r+2s+2}^{t_{k-1,n}}V(D[\ell])]}~\bf{for}~\bm{k\geq 2}.$
To prove the result, the following cases are considered.

$\mathbf {Case~1}$. $\{x^{\prime},y^{\prime}\}\cap V(D[1])\neq \emptyset$,
where $\alpha^{\prime}$ is the $k$-dimensional neighbour of the vertex $\alpha$
for $\alpha\in\{x,y\}$ in $D_{k,n}$.

Without loss of generality, assume that $x^{\prime}\in V(D[1])$ and
$y^{\prime}\in V(H)$ as Lemma \ref{lem1}~$(3)$.

\begin{figure}[ht]
\begin{center}
\scalebox{0.5}[0.5]{\includegraphics{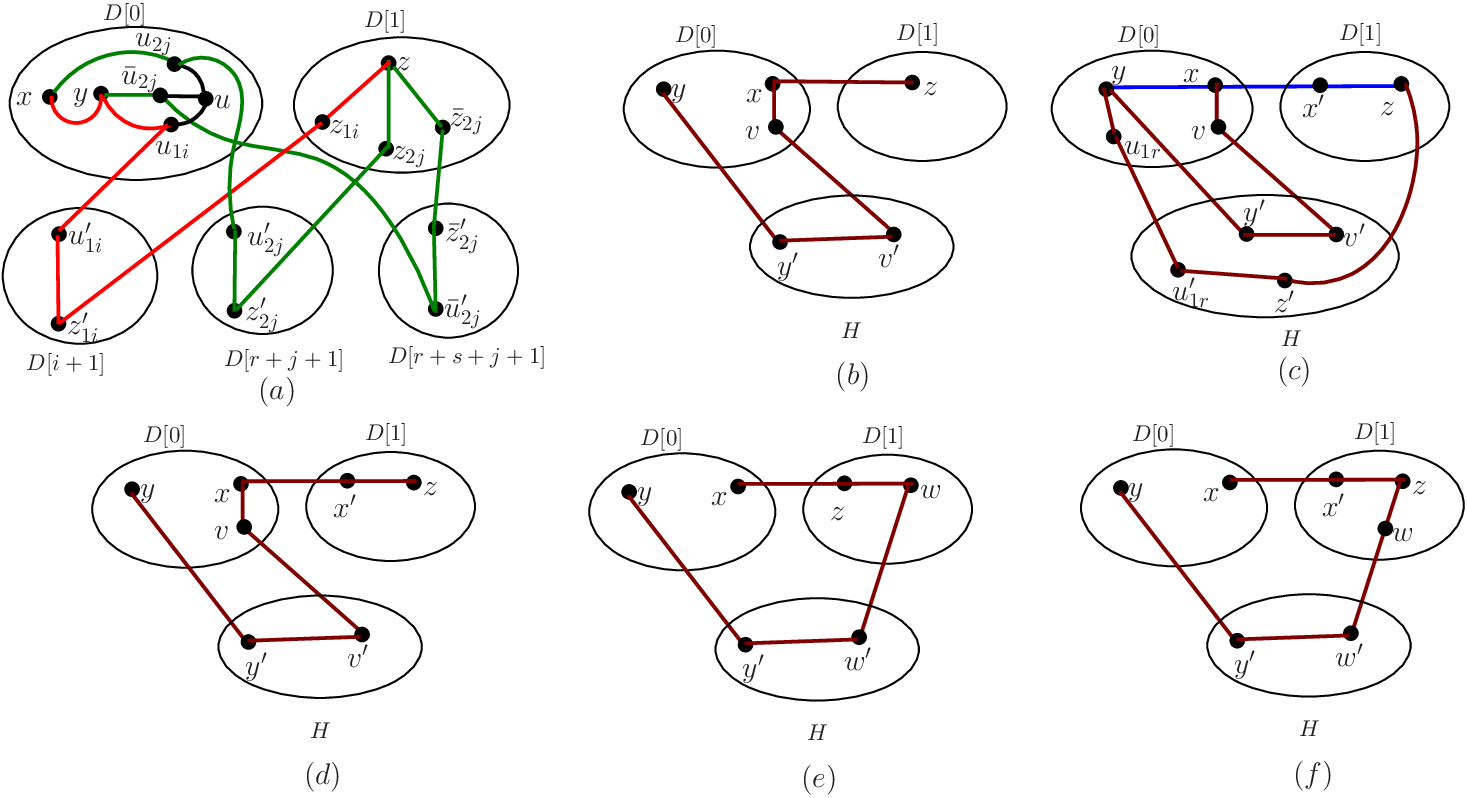}}\\
\captionsetup{font={small}}
\caption{The illustration of Case $1$ in Claim~3}\label{Fig.5}
\end{center}
\end{figure}

$\mathbf {Case~1.1}$. $d_{\mathcal{T}}(u)=d_{D[0]}(u)$.

Notice that there is a vertex $v\in (N_{D[0]}(x)\cup N_{D[0]}(y)\cup N_{D[0]}(u))\backslash V(\mathcal{T})$.
As $d_{\mathcal{T}}(u)=d_{D[0]}(u)$,
$v\in (N_{D[0]}(x)\cup N_{D[0]}(y))\backslash V(\mathcal{T})$.
Without loss of generality, assume that $v\in N_{D[0]}(x)\backslash$ $V(\mathcal{T})$.
By Lemma \ref{lem1}~$(3)$, $v^{\prime}\in V(H)$.
By Lemma \ref{lem5},
there exists a $(y^{\prime},v^{\prime})$-path, say $U_{0}$, in $H$.
\\
\indent
If $x^{\prime}=z$,
let $\tilde{P}_{s+1}=yy^{\prime}U_{0}[y^{\prime},v^{\prime}]v^{\prime}vxz$ (see Figure~\ref{Fig.5}$~(b)$).
Then $P_{1},\ldots,P_{r},\tilde{P}_{1},\ldots,\tilde{P}_{s}$ and $\tilde{P}_{s+1}$
are our desired $\lfloor\frac{2n+3k}{4}\rfloor$ ID$S$-paths in $D_{k,n}$.
\\
\indent
If $x^{\prime}\neq z$,
let $A^{\prime}=A\backslash \{\eta\}\cup \{x^{\prime}\}$.
If $z\not\in A$,
then $z^{\prime}\in V(H)$,
without loss of generality, assume that $\eta=z_{1r}$.
If $z\in A$, without loss of generality, assume that $z=z_{1r}$ and let $\eta=z_{1r}$,
then regard $\mathcal{O}[z_{1r},z]$ as the vertex $z$ in $P_{r}$ which is shown in formula $(4)$.
Thus
$A^{\prime}=\{z_{1i_{1}},x^{\prime},z_{2j},\bar{z}_{2j}|1\leq i_{1}\leq r-1,1\leq j\leq s,
r+s=\lfloor\frac{2n+3(k-1)}{4}\rfloor\}$ whether $z\in A$ or not.
By Fan Lemma \ref{lem4},
there is a fan $\mathcal{O}^{\prime}$ in $D[1]$ from $z$ to $A^{\prime}$.
Let $P_{i_{1}}^{\prime}$ be the path obtained from
$P_{i_{1}}$ shown in formula $(4)$
by replacing
$\mathcal{O}[z_{1i_{1}},z]$ with
$\mathcal{O}^{\prime}[z_{1i_{1}},z]$ for $1\leq i_{1}\leq r-1$.
Let $\tilde{P}_{j}^{\prime}$ denote the path from $\tilde{P}_{j}$ shown in formula $(5)$ by replacing
$\mathcal{O}[z_{2j},z]$ and $\mathcal{O}[z,\bar{z}_{2j}]$ with $\mathcal{O}^{\prime}[z_{2j},z]$
and $\mathcal{O}^{\prime}[z,\bar{z}_{2j}]$, respectively, where $1\leq j\leq s$.
When $z\not\in A$, we will destroy the path $P_{r}$ and find two other paths $P_{r+1}$ and $P_{r+2}$.
Let $H_{1}=D_{k,n}[V(H)\cup V(D[r+1])]$.
Then $H_{1}$ is $2$-connected as Lemma \ref{lem5}.
Let $B_{1}=\{a,b\}=\{y^{\prime},u_{1r}^{\prime}\}$ and $B_{2}=\{c,d\}=\{v^{\prime},z^{\prime}\}$.
Then there are two disjoint paths, denoted by $\mathcal{B}_{1}[c,a]$ and $\mathcal{B}_{2}[b,d]$,
connecting $(B_{1},B_{2})$ in $H_{1}$.
Let
$P_{r+1}=T_{r}[y,x]xx^{\prime}\mathcal{O}^{\prime}[x^{\prime},z]$
and
$P_{r+2}=xvv^{\prime}\mathcal{B}_{1}[v^{\prime},y^{\prime}]y^{\prime}y
T_{r}[y,u_{1r}]u_{1r}u_{1r}^{\prime}\mathcal{B}_{2}[u_{1r}^{\prime},z^{\prime}]z^{\prime}z$
if $a=y^{\prime}$ and $c=v^{\prime}$ (see Figure~\ref{Fig.5}$~(c)$).
Otherwise, $P_{r+2}=xvv^{\prime}\mathcal{B}_{1}[v^{\prime},u_{1r}^{\prime}]u_{1r}^{\prime}u_{1r}
T_{r}[u_{1r},y]yy^{\prime}\mathcal{B}_{2}[y^{\prime},z^{\prime}]z^{\prime}z$.
Then $P_{1}^{\prime},\ldots,P_{r-1}^{\prime},P_{r+1},P_{r+2},\tilde{P}_{1}^{\prime},$
$\ldots, \tilde{P}_{s}^{\prime}$
are our desired ID$S$-paths in $D_{k,n}$.
When $z\in A$,
let
$\tilde{P}_{s+1}^{\prime}=\mathcal{O}^{\prime}[z,x^{\prime}]x^{\prime}xvv^{\prime}U_{0}[v^{\prime},y^{\prime}]y^{\prime}y$
(see Figure~\ref{Fig.5}$~(d)$).
Then $P_{1}^{\prime},\ldots,P_{r-1}^{\prime},P_{r},\tilde{P}_{1}^{\prime},\ldots,\tilde{P}_{s}^{\prime}$
and $\tilde{P}_{s+1}^{\prime}$
are $\lfloor\frac{2n+3k}{4}\rfloor$ ID$S$-paths in $D_{k,n}$.

\indent
$\mathbf {Case~1.2}$. $d_{\mathcal{T}}(u)<d_{D[0]}(u)$.

\indent
As $|A|=r+2s=d_{\mathcal{T}}(u)<d_{D[0]}(u)=n+k-2$,
let $A^{\prime}=A\cup \{w\}$,
where $w\in V(D[1])\backslash (A\cup \{z\})$.
Then there is a fan $\mathcal{O}^{\prime}$ from $z$ to $A^{\prime}$
which contains $r+2s+1$ internally disjoint $(z,A^{\prime})$-paths in $D[1]$.
Let $P_{1}^{\prime},\ldots,P_{r}^{\prime},\tilde{P}_{1}^{\prime},\ldots,\tilde{P}_{s}^{\prime}$
be respectively obtained from
$P_{1},\ldots,P_{r},\tilde{P}_{1},\ldots,\tilde{P}_{s}$ shown in formulas $(4)$ and $(5)$ by
replacing the paths in the fan $\mathcal{O}$ with the
corresponding paths in the fan $\mathcal{O}^{\prime}$.
\\
\indent
If $x^{\prime}\neq z$ and $z\not\in A$,
it implies that $z^{\prime}\in V(H)$.
Choose $w$ such that $w=x^{\prime}$
and let $\tilde{U}_{0}^{\prime}$ be the $(y^{\prime},z^{\prime})$-path in $H$.
Let $\tilde{P}_{s+1}=xx^{\prime}\mathcal{O}^{\prime}[x^{\prime},z]zz^{\prime}\tilde{U}_{0}^{\prime}
[z^{\prime},y^{\prime}]y^{\prime}y$.
Then $P_{1}^{\prime},\ldots,P_{r}^{\prime},\tilde{P}_{1}^{\prime},\ldots,\tilde{P}_{s}^{\prime}$ and $\tilde{P}_{s+1}$
are $\lfloor\frac{2n+3k}{4}\rfloor$ ID$S$-paths in $D_{k,n}$.
\\
\indent
If $x^{\prime}=z$ or $x^{\prime}\neq z$ and $z\in A$,
choose $w$ such that $w^{\prime}\in V(H)$
and $\tilde{U}_{0}$ be the $(y^{\prime},w^{\prime})$-path in $H$.
If $x^{\prime}=z$,
let $\tilde{P}_{s+1}=xz\mathcal{O}^{\prime}[z,w]ww^{\prime}\tilde{U}_{0}[w^{\prime},y^{\prime}]y^{\prime}y$
 (see Figure~\ref{Fig.5}$~(e)$).
Then $P_{1}^{\prime},\ldots,P_{r}^{\prime},\tilde{P}_{1}^{\prime},\ldots,\tilde{P}_{s}^{\prime}$
and $\tilde{P}_{s+1}$
are $\lfloor\frac{2n+3k}{4}\rfloor$ ID$S$-paths in $D_{k,n}$.
If $x^{\prime}\neq z$ and $z\in A$,
without loss of generality, let $z=z_{1r}$,
then regard the path $\mathcal{O}[z,z_{1r}]$ as the vertex $z$ in $P_{r}$.
Let $A^{\prime\prime}=A\backslash\{z_{1r}\}\cup \{w,x^{\prime}\}$.
Then there exists a fan $\mathcal{O}^{\prime\prime}$ which contains
$r+2s+1$ internally disjoint paths from $z$ to $A^{\prime\prime}$ in $D[1]$.
Let $\tilde{P}_{s+1}^{\prime}=xx^{\prime}\mathcal{O}^{\prime\prime}[x^{\prime},z]z\mathcal{O}^{\prime\prime}[z,w]
ww^{\prime}\tilde{U}_{0}[w^{\prime},y^{\prime}]y^{\prime}y$ (see Figure~\ref{Fig.5}$~(f)$).
Then $P_{1}^{\prime\prime},\ldots,P_{r-1}^{\prime\prime},P_{r},\tilde{P}_{1}^{\prime\prime},
\ldots,\tilde{P}_{s}^{\prime\prime}$ and $\tilde{P}_{s+1}^{\prime}$
are $\lfloor\frac{2n+3k}{4}\rfloor$ ID$S$-paths in $D_{k,n}$,
where $P_{1}^{\prime\prime},\ldots,P_{r-1}^{\prime\prime},\tilde{P}_{1}^{\prime\prime},
\ldots,\tilde{P}_{s}^{\prime\prime}$ are respectively obtained from
$P_{1},\ldots,P_{r-1},\tilde{P}_{1},\ldots,\tilde{P}_{s}$ in formula $(4)$ and $(5)$
just by replacing the paths in the fan $\mathcal{O}$ with the
 corresponding paths in the fan $\mathcal{O}^{\prime\prime}$.

\indent
$\mathbf {Case~2}$. $\{x^{\prime},y^{\prime}\}\cap V(D[1])=\emptyset$.

\indent
As $\{x^{\prime},y^{\prime}\}\cap V(D[1])=\emptyset$ and Lemma \ref{lem1}~$(3)$,
one has that $\{x^{\prime},y^{\prime}\}\subseteq V(H)$,
where $H=D_{k,n}[\bigcup _{\ell=r+2s+2}^{t_{k-1,n}}V(D[\ell])]$.
Suppose that $\hat{U}_{0}$ is the $(y^{\prime},x^{\prime})$-path in $H$ as $H$ is connected.
Let each one in $\{B_{1},B_{2},\mathcal{B}_{1}[c,a],\mathcal{B}_{2}[b,d]\}$ be the same as that in Case~$1.1$,
where $B_{1}=\{a,b\}=\{y^{\prime},u_{1r}^{\prime}\}$ and $B_{2}=\{c,d\}=\{v^{\prime},z^{\prime}\}$.

\indent
$\mathbf {Case~2.1}$. $d_{\mathcal{T}}(u)=d_{D[0]}(u)$.

\indent
Recall that there is a vertex $v\in (N_{D[0]}(x)\cup N_{D[0]}(y)\cup N_{D[0]}(u))\backslash V(\mathcal{T})$.
Since $d_{\mathcal{T}}(u)=d_{D[0]}(u)$, $v\in (N_{D[0]}(x)\cup N_{D[0]}(y))\backslash V(\mathcal{T})$.
Without loss of generality, assume that
$v\in N_{D[0]}(x)\backslash V$ $(\mathcal{T})$.
By Lemma \ref{lem1}~$(3)$,
one has that either $v^{\prime}\in V(D[1])$ or $v^{\prime}\in V(H)$.

\indent
$\mathbf {Case~2.1.1}$. $v^{\prime}\in V(D[1])$.

\indent
If $v^{\prime}=z$,
let $\tilde{P}_{s+1}=zvxx^{\prime}\hat{U}_{0}[x^{\prime},y^{\prime}]y^{\prime}y$.
Then $P_{1},\ldots,P_{r},\tilde{P}_{1},\ldots,\tilde{P}_{s}$ and $\tilde{P}_{s+1}$
are our desired ID$S$-paths in $D_{k,n}$.
\\
\indent
If $v^{\prime}\neq z$, let $A^{\prime}=A\backslash \{\eta\}\cup \{v^{\prime}\}$ with $\eta\in A$,
then there is a fan $\mathcal{O}^{\prime}$ which contains $r+2s$ internally disjoint paths
from $z$ to $A^{\prime}$ in $D[1]$.
Let $P_{1}^{\prime},\ldots,P_{r-1}^{\prime},\tilde{P}_{1}^{\prime},\ldots,\tilde{P}_{s}^{\prime}$
be respectively obtained from
$P_{1},\ldots,P_{r},\tilde{P}_{1},\ldots,\tilde{P}_{s}$ shown in formulas $(4)$ and $(5)$ by
replacing the paths in the fan $\mathcal{O}$ with the
corresponding paths in the fan $\mathcal{O}^{\prime}$.
If $z\in A$, without loss of generality, assume that $z=z_{1r}$,
then regard the path $\mathcal{O}[z,z_{1r}]$ as the vertex $z$ in the path $P_{r}$.
Let $\eta=z_{1r}$ and
$\tilde{P}_{s+1}=\mathcal{O}^{\prime}[z,v^{\prime}]v^{\prime}vxx^{\prime}
\hat{U}_{0}[x^{\prime},y^{\prime}]y^{\prime}y$.
Then $P_{1}^{\prime},\ldots,P_{r-1}^{\prime},P_{r},\tilde{P}_{1}^{\prime},\ldots,\tilde{P}_{s}^{\prime}$
 and $\tilde{P}_{s+1}$
are our desired ID$S$-paths in $D_{k,n}$.
If $z\not\in A$,
then $z^{\prime}\in V(H)$.
Let $\eta=z_{1r}$.
We will
destroy the path $P_{r}$ and find two paths $P_{r+1}$ and $P_{r+2}$ in $D_{k,n}$.
Replace $v^{\prime}$ with $x^{\prime}$ in $B_{2}$.
Let $P_{r+1}=T_{r}[y,x]xvv^{\prime}\mathcal{O}^{\prime}[v^{\prime},z]$
and $P_{r+2}=zz^{\prime}\mathcal{B}_{1}[z^{\prime},y^{\prime}]y^{\prime}yT_{r}[y,u_{1r}]
u_{1r}u_{1r}^{\prime}\mathcal{B}_{2}[u_{1r}^{\prime},x^{\prime}]x^{\prime}x$
if $a=y^{\prime}$ and $c=z^{\prime}$.
Otherwise, $P_{r+2}=zz^{\prime}\mathcal{B}_{1}[z^{\prime},u_{1r}^{\prime}]u_{1r}^{\prime}u_{1r}
T_{r}[u_{1r},y]yy^{\prime}\mathcal{B}_{2}[y^{\prime},x^{\prime}]x^{\prime}x$.
Then $P_{1}^{\prime},\ldots,P_{r-1}^{\prime},P_{r+1},P_{r+2},
\tilde{P}_{1}^{\prime},\ldots,\tilde{P}_{s}^{\prime}$
 are our desired ID$S$-paths in $D_{k,n}$.

\indent
$\mathbf {Case~2.1.2}$. $v^{\prime}\in V(H)$.

\indent
$\mathbf {Case~2.1.2.1}$. $z\in A$.

\indent
Without loss of generality, assume that $z=z_{1r}$.
Then regard the path $\mathcal{O}[z,z_{1r}]$ as the vertex $z$ in the path $P_{r}$.
Let $A^{\prime}=A\backslash \{z_{1r}\}\cup \{z_{2s+1}\}$
with $z_{2s+1}^{\prime}\in V(H)$.
Denoted by $\mathcal{O}^{\prime}$ the fan from $z$ to $A^{\prime}$ in $D[1]$.
Let $B_{3}=\{a,b\}=\{y^{\prime},z_{2s+1}^{\prime}\}$ and $B_{4}=\{c,d\}=\{x^{\prime},v^{\prime}\}$ be the
vertex subsets of $H$.
Then there are two pairwise internally disjoint $(B_{3},B_{4})$-paths in $H$ as Lemma \ref{lem5},
denoted by $\mathcal{B}_{3}[c,a]$ and $\mathcal{B}_{4}[b,d]$.
Let $\tilde{P}_{s+1}=yy^{\prime}\mathcal{B}_{4}[y^{\prime},v^{\prime}]v^{\prime}vxx^{\prime}
\mathcal{B}_{3}[x^{\prime},z_{2s+1}^{\prime}] z_{2s+1}^{\prime}z_{2s+1}\mathcal{O}^{\prime}[z_{2s+1},$
$z]$ if $a=z_{2s+1}^{\prime}$ and $c=x^{\prime}$
(see Figure~\ref{Fig.6}$~(a)$).
Otherwise, $\tilde{P}_{s+1}=yy^{\prime}\mathcal{B}_{4}[y^{\prime},x^{\prime}]x^{\prime}xvv^{\prime}
\mathcal{B}_{3}[v^{\prime},z_{2s+1}^{\prime}]z_{2s+1}^{\prime}z_{2s+1}$
$\mathcal{O}^{\prime}[z_{2s+1},z]$.
Then $P_{1}^{\prime},\ldots,P_{r-1}^{\prime},$ $P_{r},\tilde{P}_{1}^{\prime},\ldots,\tilde{P}_{s}^{\prime}$
and $\tilde{P}_{s+1}$ are our desired ID$S$-paths in $D_{k,n}$,
where $P_{1}^{\prime},\ldots,P_{r-1}^{\prime},\tilde{P}_{1}^{\prime},\ldots,\tilde{P}_{s}^{\prime}$
are respectively obtained from
$P_{1},\ldots,P_{r-1},\tilde{P}_{1},\ldots,\tilde{P}_{s}$ shown in formulas $(4)$ and $(5)$
by replacing the paths in the fan $\mathcal{O}$ with the
 corresponding paths in the fan $\mathcal{O}^{\prime}$.
\begin{figure}[ht]
\begin{center}
\scalebox{0.5}[0.5]{\includegraphics{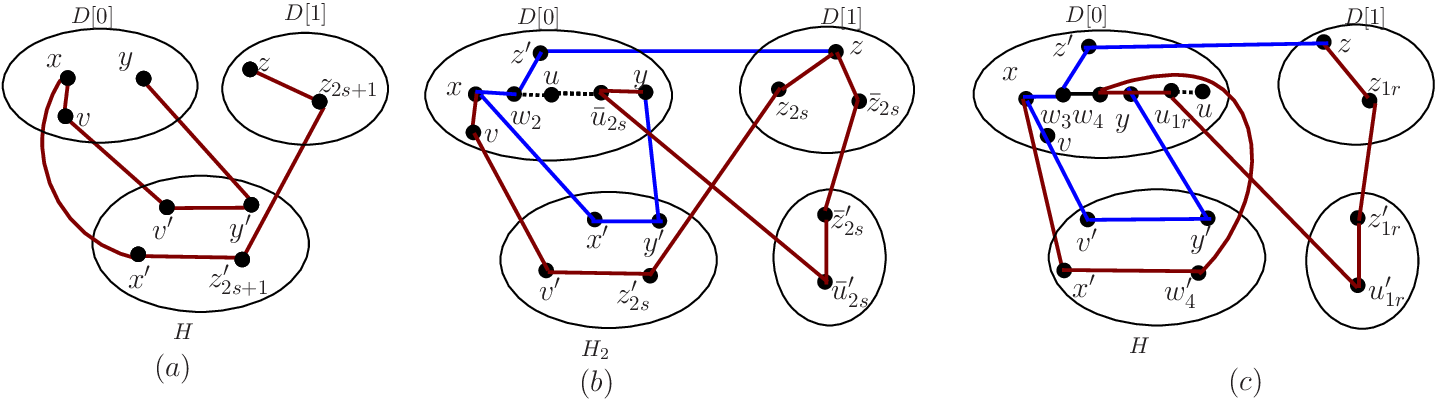}}\\
\scalebox{0.5}[0.5]{\includegraphics{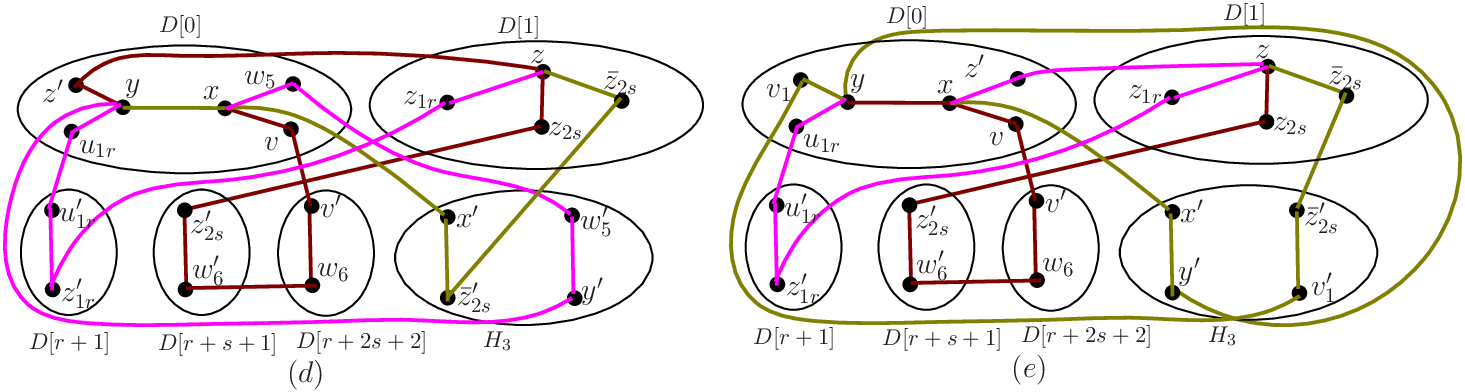}}\\
\captionsetup{font={small}}
\caption{The graphs of Case $2.1$ in Claim~3}\label{Fig.6}
\end{center}
\end{figure}

\indent
$\mathbf {Case~2.1.2.2}$. $z\not\in A$.

\indent
As $z\not\in A$,
one has that either $z^{\prime}\in V(H)$ or $z^{\prime}\in V(D[0])$ as Lemma \ref{lem1}~$(3)$.
Firstly, we consider $z^{\prime}\in V(H)$.
If $z^{\prime}\in V(H)$,
replace $z_{2s+1}^{\prime}$ with $z^{\prime}$ in $B_{3}$.
Let $\tilde{P}_{s+1}^{\prime}$ be the path obtained from
$\tilde{P}_{s+1}$ which is shown in Case 2.1.2.1 just by replacing
$\mathcal{B}_{3}[x^{\prime},z_{2s+1}^{\prime}]z_{2s+1}^{\prime}z_{2s+1}\mathcal{O}^{\prime}[z_{2s+1},z]$
(or $\mathcal{B}_{3}[v^{\prime},z_{2s+1}^{\prime}]z_{2s+1}^{\prime}z_{2s+1}\mathcal{O}^{\prime}[z_{2s+1},z]$)
with $\mathcal{B}_{3}[x^{\prime},z^{\prime}]z^{\prime}z$
(or $\mathcal{B}_{3}[v^{\prime},z^{\prime}]z^{\prime}z$).
Then $P_{1},\ldots,P_{r},\tilde{P}_{1},$ $\ldots,\tilde{P}_{s}$
and $\tilde{P}_{s+1}^{\prime}$ are our desired ID$S$-paths in $D_{k,n}$.
\\
\indent
Next, we will consider the case of $z^{\prime}\in V(D[0])$ as follows.
$\mathbf{I}:$ $z^{\prime},w\not\in V(\mathcal{T})$ for some $w\in N_{D[0]}(z^{\prime})$;
$\mathbf{II}:$ $z^{\prime}\not\in V(\mathcal{T})$ and $N_{D[0]}(z^{\prime})\subseteq V(\mathcal{T})$ and
$\mathbf{III}:$ $z^{\prime}\in V(\mathcal{T})$.
\\
\indent
$\mathbf{II}$ is divided into $\mathbf{II.1}$ and $\mathbf{II.2}$,
where $\mathbf{II.1}:$
$N_{D[0]}(z^{\prime})\cap \bigcup_{\varepsilon=1}^{r+s}V(T_{\varepsilon}[a,u])\neq\emptyset$
and
$\mathbf{II.2}:$ $N_{D[0]}(z^{\prime})\cap \bigcup_{\varepsilon=1}^{r+s}V(T_{\varepsilon}[a,u])=\emptyset$,
which implies that
$N_{D[0]}(z^{\prime})\subseteq \bigcup_{i=1}^{r}V(T_{i}[x,y])$.
Similarly, $\mathbf{III}$ is divided into $\mathbf{III.1}$ and $\mathbf{III.2}$,
where $\mathbf{III.1}:$
$z^{\prime}\in V(T_{\delta}[b,u])$ for some $\delta\in [r+s]$
and $\mathbf{III.2}:$ $z^{\prime}\in V(T_{j}[x,y])$ for some $j\in [r]$,
where $a,b\in\{x,y\}$ and $T_{\varepsilon}[a,u]$ (resp.~$T_{\delta}[b,u]$)
be the subpath of $T_{\varepsilon}$ (resp.~$T_{\delta}$) which does not contain
$\{x,y\}\backslash\{a\}$ (resp.~$\{x,y\}\backslash\{b\}$).
Furthermore, the discussions of $\mathbf{II.1}$ and $\mathbf{III.1}$ are together.
\\
\indent
$\mathbf{I}.$ If $z^{\prime},w\not\in V(\mathcal{T})$
with some $w\in N_{D[0]}(z^{\prime})$,
then $w^{\prime}\in V(H)$ as Lemma \ref{lem1}~$(3)$.
Replace $z_{2s+1}^{\prime}$ with $w^{\prime}$ in $B_{3}$.
Let $\tilde{P}_{s+1}=yy^{\prime}\mathcal{B}_{4}[y^{\prime},v^{\prime}]v^{\prime}v
xx^{\prime}\mathcal{B}_{3}[x^{\prime},w^{\prime}]w^{\prime}wz^{\prime}z$ if $a=w^{\prime}$ and $c=x^{\prime}$.
Otherwise, $\tilde{P}_{s+1}=yy^{\prime}\mathcal{B}_{4}[y^{\prime},x^{\prime}]x^{\prime}xv$
$v^{\prime}\mathcal{B}_{3}[v^{\prime},w^{\prime}]w^{\prime}wz^{\prime}z$.
Then $P_{1},\ldots,P_{r},\tilde{P}_{1},\ldots,\tilde{P}_{s}$ and $\tilde{P}_{s+1}$
are our desired ID$S$-paths in $D_{k,n}$.
\\
\indent
$\mathbf{II.1}$ (resp. $\mathbf{III.1}$).
It implies that
$N_{D[0]}[z^{\prime}]\cap V(T_{\delta}[b,u])\neq \emptyset$ for some $\delta\in [r+s]$.
Without loss of generality, assume that $\delta=r+s,b=x$
and let $w_{2}\in N_{D[0]}[z^{\prime}]\cap V(T_{r+s}[x,u])$.
Let $H_{2}=D_{k,n}[V(H)\cup V(D[r+s+1])]$. Then $\{x^{\prime},y^{\prime},z_{2s}^{\prime},v^{\prime}\}\subseteq V(H_{2})$.
Suppose that $B_{5}=\{a,b\}=\{y^{\prime},z_{2s}^{\prime}\}$ and $B_{6}=\{c,d\}=\{x^{\prime},v^{\prime}\}$.
By Lemma \ref{lem5}, there are two internally disjoint paths, say $\mathcal{B}_{5}[c,a]$ and $\mathcal{B}_{6}[b,d]$,
connecting $(B_{5},B_{6})$, in $H_{2}$.
Let $\tilde{P}_{s+1}=yy^{\prime}\mathcal{B}_{6}[y^{\prime},x^{\prime}]x^{\prime}xT_{r+s}[x,w_{2}]w_{2}z^{\prime}z$
and
$\tilde{P}_{s+2}=xvv^{\prime}\mathcal{B}_{5}[v^{\prime},z_{2s}^{\prime}]z_{2s}^{\prime}z_{2s}
\mathcal{O}[z_{2s},z]z\mathcal{O}[z,\bar{z}_{2s}]\bar{z}_{2s}\bar{z}_{2s}^{\prime}
\bar{R}_{2s}[$ $\bar{z}_{2s}^{\prime},\bar{u}_{2s}^{\prime}]
\bar{u}_{2s}^{\prime}\bar{u}_{2s}T_{r+s}[\bar{u}_{2s},y]$ if $b=y^{\prime}$ and $d=x^{\prime}$
(see Figure~\ref{Fig.6}$~(b)$).
Otherwise,
$\tilde{P}_{s+1}=yy^{\prime}\mathcal{B}_{6}[y^{\prime},v^{\prime}]v^{\prime}vx T_{r+s}[x,w_{2}]w_{2}z^{\prime}z$
and
$\tilde{P}_{s+2}=xx^{\prime}\mathcal{B}_{5}[x^{\prime},z_{2s}^{\prime}]z_{2s}^{\prime}z_{2s}
\mathcal{O}[z_{2s},z]z\mathcal{O}[z,\bar{z}_{2s}]\bar{z}_{2s}\bar{z}_{2s}^{\prime}
\bar{R}_{2s}[$ $\bar{z}_{2s}^{\prime},\bar{u}_{2s}^{\prime}]
\bar{u}_{2s}^{\prime}\bar{u}_{2s}T_{r+s}[\bar{u}_{2s},y]$.
Then $P_{1},\ldots,P_{r},\tilde{P}_{1},\ldots,\tilde{P}_{s-1},\tilde{P}_{s+1}$ and $\tilde{P}_{s+2}$
are $\lfloor\frac{2n+3k}{4}\rfloor$ ID$S$-paths in $D_{k,n}$.
\\
\indent
$\mathbf{II.2}.$
If $z^{\prime}\not\in V(\mathcal{T})$ and $N_{D[0]}(z^{\prime})\subseteq\bigcup_{i=1}^{r}V(T_{i}[x,y])$,
then there exists at least one path in $\mathcal{T}$, say $T_{r}$, such that
$|N_{D[0]}(z^{\prime})\cap V(T_{r}[x,y])|\geq 2$
according to $d_{D[0]}(z^{\prime})=n+k-2$ and $r\leq \lfloor\frac{2n+3(k-1)}{4}\rfloor$.
Without loss of generality, assume that $\{w_{3},w_{4}\}\subseteq N_{D[0]}(z^{\prime})\cap V(T_{r}[x,y])$.
Suppose that $d_{T_{r}}(x,w_{3})<d_{T_{r}}(x,w_{4})$.
Recall that $B_{3}=\{a,b\}=\{y^{\prime},z_{2s+1}^{\prime}\}$ and $B_{4}=\{c,d\}=\{x^{\prime},v^{\prime}\}$.
Replace $z_{2s+1}^{\prime}$ of $B_{3}$ with $w_{4}^{\prime}$.
Let $\tilde{P}_{s+1}=yy^{\prime}\mathcal{B}_{4}[y^{\prime},v^{\prime}]v^{\prime}vx T_{r}[x,w_{3}]w_{3}z^{\prime}z$
and $\tilde{P}_{s+2}=xx^{\prime}\mathcal{B}_{3}[x^{\prime},w_{4}^{\prime}]w_{4}^{\prime}w_{4}yT_{r}[y,u_{1r}]
u_{1r}u_{1r}^{\prime}R_{1r}[u_{1r}^{\prime},z_{1r}^{\prime}]z_{1r}^{\prime}$$z_{1r}\mathcal{O}[z_{1r},z]$ if $b=y^{\prime}$
and $d=v^{\prime}$
 (see Figure~\ref{Fig.6}$~(c)$).
Otherwise,
$\tilde{P}_{s+1}=yy^{\prime}\mathcal{B}_{4}[y^{\prime},x^{\prime}]x^{\prime}xT_{r}[x,w_{3}]w_{3}z^{\prime}z$
and $\tilde{P}_{s+2}=xvv^{\prime}\mathcal{B}_{3}[v^{\prime},w_{4}^{\prime}]w_{4}^{\prime}w_{4}yT_{r}[y,$ $u_{1r}]
u_{1r}u_{1r}^{\prime}R_{1r}[u_{1r}^{\prime},z_{1r}^{\prime}]z_{1r}^{\prime}$$z_{1r}\mathcal{O}[z_{1r},z]$.
 Then $P_{1},\ldots,P_{r-1},\tilde{P}_{1},\ldots,\tilde{P}_{s},\tilde{P}_{s+1}$ and $\tilde{P}_{s+2}$
are our desired ID$S$-paths in $D_{k,n}$.
\\
\indent
$\mathbf{III.2}:$ If $z^{\prime}\in V(T_{j}[x,y])$ for some $j\in [r]$,
without loss of generality, assume that $j=r$.
\\
\indent
If $yz^{\prime}\not\in E(T_{r}[x,y])$,
then there is a vertex $\bar{w}_{4}$ such that $y\bar{w}_{4}\in E(T_{r}[z^{\prime},y])$ and
$\bar{w}_{4}^{\prime}\in V(H)$.
Replacing $z_{2s+1}^{\prime}$ of $B_{3}$ with $\bar{w}_{4}^{\prime}$.
Let $\tilde{P}_{s+1}^{\prime}$ be the path obtained from $\tilde{P}_{s+1}$ in II.2 by
replacing $T_{r}[x,w_{3}]w_{3}$ with $T_{r}[x,z^{\prime}]$,
$\tilde{P}_{s+2}^{\prime}$ be the path obtained from $\tilde{P}_{s+2}$ in II.2 by
replacing $\mathcal{B}_{3}[x^{\prime},w_{4}^{\prime}]w_{4}^{\prime}w_{4}$ with
$\mathcal{B}_{3}[x^{\prime},\bar{w}_{4}^{\prime}]\bar{w}_{4}^{\prime}\bar{w}_{4}$.
Then $P_{1},\ldots,P_{r-1},\tilde{P}_{1},\ldots,\tilde{P}_{s},\tilde{P}_{s+1}^{\prime}$
and $\tilde{P}_{s+2}^{\prime}$
are our desired ID$S$-paths in $D_{k,n}$.
\\
\indent
If $yz^{\prime}\in E(T_{r}[x,y])$ and $xz^{\prime}\not\in E(T_{r}[x,y])$,
without loss of generality,
let $xw_{5}\in E(T_{r}[x,z^{\prime}])$.
By Lemma \ref{lem1},
suppose that $\{v^{\prime},w_{6}\}\subseteq V(D[r+2s+2])$ such that $w_{6}^{\prime}\in V(D[r+s+1])$
and $\{y^{\prime},x^{\prime},w_{5}^{\prime}\}\subseteq V(D_{k,n}[\bigcup_{\ell=r+2s+3}^{t_{k-1,n}}V(D[\ell])])$.
Let $H_{3}=D_{k,n}[\bigcup_{\ell=r+2s+3}^{t_{k-1,n}}V(D[\ell])\cup V(D[r+2s+1])]$,
$B_{7}=\{a,b\}=\{y^{\prime},\bar{z}_{2s}^{\prime}\}$ and $B_{8}=\{c,d\}=\{x^{\prime},w_{5}^{\prime}\}$.
Then there exist two internally disjoint $(B_{7},B_{8})$-paths,
say $\mathcal{B}_{7}[c,a]$ and $\mathcal{B}_{8}[b,d]$, in $H_{3}$.
Moreover, assume that $\hat{U}_{1}$ and $\hat{U}_{2}$ are the $(v^{\prime},w_{6})$-path
and $(w_{6}^{\prime},z_{2s}^{\prime})$-path in $D[r+2s+2]$ and $D[r+s+1]$, respectively.
We will destroy the paths $P_{r}$ and $\tilde{P}_{s}$ and find three other paths.
Let $\tilde{P}_{s+1}=xvv^{\prime}\hat{U}_{1}[v^{\prime},w_{6}]w_{6}w_{6}^{\prime}
\hat{U}_{2}[w_{6}^{\prime},z_{2s}^{\prime}]z_{2s}^{\prime}z_{2s}$$\mathcal{O}[z_{2s},z]zz^{\prime}y$,
$\tilde{P}_{s+2}=\mathcal{O}[z,z_{1r}]z_{1r}z_{1r}^{\prime}
R_{1r}[z_{1r}^{\prime},u_{1r}^{\prime}]u_{1r}^{\prime}u_{1r}T_{r}[$ $u_{1r},y]y$$y^{\prime}\mathcal{B}_{8}[y^{\prime},w_{5}^{\prime}]w_{5}^{\prime}w_{5}x$
and $\tilde{P}_{s+3}=T_{r+s}[y,x]xx^{\prime}\mathcal{B}_{7}[x^{\prime},\bar{z}_{2s}^{\prime}]\bar{z}_{2s}^{\prime}\bar{z}_{2s}
\mathcal{O}[\bar{z}_{2s},z]$ if $b=y^{\prime}$ and $d=w_{5}^{\prime}$
(see Figure~\ref{Fig.6}$~(d)$).
Otherwise,
$\tilde{P}_{s+2}=\mathcal{O}[z,z_{1r}]z_{1r}z_{1r}^{\prime}
R_{1r}[z_{1r}^{\prime},u_{1r}^{\prime}]u_{1r}^{\prime}u_{1r}T_{r}[u_{1r},y]y$
$y^{\prime}\mathcal{B}_{8}[y^{\prime},x^{\prime}]x^{\prime}x$
and $\tilde{P}_{s+3}=T_{r+s}[y,x]xw_{5}w_{5}^{\prime}\mathcal{B}_{7}[w_{5}^{\prime},$
$\bar{z}_{2s}^{\prime}]\bar{z}_{2s}^{\prime}\bar{z}_{2s}
\mathcal{O}[\bar{z}_{2s},z]$.
Then $P_{1},\ldots,P_{r-1},\tilde{P}_{1},\ldots,$
$\tilde{P}_{s-1},\tilde{P}_{s+1},\tilde{P}_{s+2}$ and $\tilde{P}_{s+3}$
are $\lfloor\frac{2n+3k}{4}\rfloor$ ID$S$-paths in $D_{k,n}$.
\\
\indent
If $\{xz^{\prime},yz^{\prime}\}\subseteq E(T_{r}[x,y])$,
recall that $v,v_{1}\in (N_{D[0]}(x)\cup N_{D[0]}(y)\cup N_{D[0]}(u))\backslash V(\mathcal{T})$.
As $d_{\mathcal{T}}(u)=d_{D[0]}(u)$,
we also have that
$v_{1}\in  (N_{D[0]}(x)\cup N_{D[0]}(y))\backslash V(\mathcal{T})$.
If $v_{1}\in N_{D[0]}(x)$,
then replace $w_{5}^{\prime}$ with $v_{1}^{\prime}$ in $B_{8}$.
Let $\tilde{P}_{s+1}^{\prime}=\tilde{P}_{s+1},\tilde{P}_{s+3}^{\prime}=\tilde{P}_{s+3}$
(resp.~$\tilde{P}_{s+1}^{\prime}=\tilde{P}_{s+1},\tilde{P}_{s+2}^{\prime}=\tilde{P}_{s+2}$)
and $\tilde{P}_{s+2}^{\prime}$ (resp.~$\tilde{P}_{s+3}^{\prime}$) be obtained
from $\tilde{P}_{s+2}$ (resp.~$\tilde{P}_{s+3}$)
by replacing $\mathcal{B}_{8}[y^{\prime},w_{5}^{\prime}]w_{5}w_{5}^{\prime}$
(resp.~$w_{5}w_{5}^{\prime}\mathcal{B}_{7}[w_{5}^{\prime},\bar{z}_{2s}^{\prime}]$)
with $\mathcal{B}_{8}[y^{\prime},v_{1}^{\prime}]v_{1}v_{1}^{\prime}$
(resp.~$v_{1}v_{1}^{\prime}\mathcal{B}_{7}[v_{1}^{\prime},\bar{z}_{2s}^{\prime}]$),
where $\tilde{P}_{s+1},\tilde{P}_{s+2}$ and $\tilde{P}_{s+3}$ are shown in
the case of
$yz^{\prime}\in E(T_{r}[x,y])$ and $xz^{\prime}\not\in E(T_{r}[x,y])$.
Then $P_{1},\ldots,P_{r-1},\tilde{P}_{1},\ldots,$
$\tilde{P}_{s-1},\tilde{P}_{s+1}^{\prime},\tilde{P}_{s+2}^{\prime}$ and $\tilde{P}_{s+3}^{\prime}$
are $\lfloor\frac{2n+3k}{4}\rfloor$ ID$S$-paths in $D_{k,n}$.
If $v_{1}\in N_{D[0]}(y)$,
let each element in $\{v^{\prime},w_{6},w_{6}^{\prime},x^{\prime},$ $y^{\prime},\bar{z}_{2s}^{\prime},
H_{3},B_{7},$
$B_{8},\mathcal{B}_{7}[c,a],\mathcal{B}_{8}[b,$
$d],\hat{U}_{1}[v^{\prime},w_{6}],
\hat{U}_{2}[w_{6}^{\prime},z_{2s}^{\prime}]\}$
be the same and has the same properties as that in the case of
$yz^{\prime}\in E(T_{r}[x,y])$ and $xz^{\prime}\not\in E(T_{r}[x,y])$.
Replace $\bar{z}_{2s}^{\prime}$ and $w_{5}^{\prime}$ in $B_{7}$ and $B_{8}$ with $v_{1}^{\prime}$ and $\bar{z}_{2s}^{\prime}$, respectively.
We will destroy the paths $P_{r}$ and $\tilde{P}_{s}$ and find three other paths.
Let $\tilde{P}_{s+1}=T_{r+s}[y,x]xvv^{\prime}\hat{U}_{1}[v^{\prime},w_{6}]w_{6}w_{6}^{\prime}$
$\hat{U}_{2}[w_{6}^{\prime},z_{2s}^{\prime}]z_{2s}^{\prime}z_{2s}\mathcal{O}[z_{2s},z]$,
$\tilde{P}_{s+2}=xz^{\prime}z\mathcal{O}[z,z_{1r}]z_{1r}z_{1r}^{\prime}R_{1r}[z_{1r}^{\prime},u_{1r}^{\prime}]
u_{1r}^{\prime}u_{1r}T_{r}[u_{1r},y]$
and
$\tilde{P}_{s+3}=xx^{\prime}\mathcal{B}_{7}[x^{\prime},y^{\prime}]y^{\prime}yv_{1}v_{1}^{\prime}
\mathcal{B}_{8}[v_{1}^{\prime},
\bar{z}_{2s}^{\prime}]
\bar{z}_{2s}^{\prime}\bar{z}_{2s}$ $\mathcal{O}[\bar{z}_{2s},z]$ if $a=y^{\prime}$ and $c=x^{\prime}$
(see Figure~\ref{Fig.6}$~(e)$).
Otherwise, $\tilde{P}_{s+3}=xx^{\prime}\mathcal{B}_{7}[x^{\prime},v_{1}^{\prime}]v_{1}^{\prime}v_{1}yy^{\prime}
\mathcal{B}_{8}[y^{\prime},\bar{z}_{2s}^{\prime}]$
$\bar{z}_{2s}^{\prime}\bar{z}_{2s}\mathcal{O}[\bar{z}_{2s},z]$.
Then $P_{1},\ldots,P_{r-1},\tilde{P}_{1},\ldots,\tilde{P}_{s-1},\tilde{P}_{s+1},$
$\tilde{P}_{s+2}$ and $\tilde{P}_{s+3}$
are $\lfloor\frac{2n+3k}{4}\rfloor$ ID$S$-paths in $D_{k,n}$.

\indent
$\mathbf {Case~2.2}$. $d_{\mathcal{T}}(u)+1=d_{D[0]}(u)$.

\indent
Let each one in $\{H_{1},H_{2},B_{i},B_{j},\mathcal{B}_{i}[c,a],\mathcal{B}_{j}[b,d]|i=1,3,5;j=2,4,6.\}$
be the same as that in Case~$2.1$.
Recall that $|A|=n+k-3$. Then add a new vertex $z_{2s+1}$ to $A$
and let $A^{\prime\prime}=A\cup \{z_{2s+1}\}$,
where $z_{2s+1}^{\prime}\in V(H)$.
By Fan Lemma \ref{lem4}, there is a fan $\mathcal{O}^{\prime\prime}$ which contains
$r+2s+1$ internally disjoint paths from $z$ to $A^{\prime\prime}$ in $D[1]$.
Let $P_{1}^{\prime\prime},\ldots,P_{r}^{\prime\prime},\tilde{P}_{1}^{\prime\prime},\ldots,
\tilde{P}_{s}^{\prime\prime}$
be respectively obtained from
$P_{1},\ldots,P_{r},\tilde{P}_{1},\ldots,\tilde{P}_{s}$ shown in formulas $(4)$ and $(5)$ by
replacing the paths in the fan $\mathcal{O}$ with the
corresponding paths in the fan $\mathcal{O}^{\prime\prime}$.
Recall that there are at lest two vertices $v$ and $v_{1}$
such that $v,v_{1}\in (N_{D[0]}(x)\cup N_{D[0]}(y)\cup N_{D[0]}(u))\backslash V(\mathcal{T})$.
Since $d_{\mathcal{T}}(u)+1=d_{D[0]}(u)$,
there exists at least one vertex, say $v$, such that $v\in (N_{D[0]}(x)\cup N_{D[0]}(y))\backslash V(\mathcal{T})$.
Without loss of generality, let $v\in N_{D[0]}(x)$. Then $v^{\prime}\in V(D[1]\cup H)$.
\\
\indent
If $v^{\prime}\in V(H)$,
let $\tilde{P}_{s+1}^{\prime}$ be the path obtained from
$\tilde{P}_{s+1}$ in Case~$2.1.2.1$ by
replacing $\mathcal{O}^{\prime}[z_{2s+1},z]$ with $\mathcal{O}^{\prime\prime}[z_{2s+1},z]$ (see Figure~\ref{Fig.6}$~(a)$).
Then $P_{1}^{\prime\prime},\ldots,P_{r}^{\prime\prime},\tilde{P}_{1}^{\prime\prime},\ldots,
\tilde{P}_{s}^{\prime\prime}$ and $\tilde{P}_{s+1}^{\prime}$
are our desired ID$S$-paths in $D_{k,n}$.
\\
\indent
If $v^{\prime}\in V(D[1])$ and $z\not\in A\cup \{v^{\prime}\}$, then $z^{\prime}\in V(H)$.
Replace $z_{2s}^{\prime}$ in $B_{5}$ with $x^{\prime}$,
$v^{\prime}$ and $x^{\prime}$ with $z^{\prime}$ and $z_{2s+1}^{\prime}$, respectively, in $B_{6}$.
Let $\tilde{P}_{s+1}=yy^{\prime}B_{6}[y^{\prime},z^{\prime}]z^{\prime}z
\mathcal{O}^{\prime\prime}[z,z_{2s+1}]
z_{2s+1}z_{2s+1}^{\prime}B_{5}[z_{2s+1}^{\prime},x^{\prime}]$ $x^{\prime}x$ if $a=x^{\prime}$ and $c=z_{2s+1}^{\prime}$.
Otherwise, $\tilde{P}_{s+1}=yy^{\prime}B_{6}[y^{\prime},z_{2s+1}^{\prime}]z_{2s+1}^{\prime}z_{2s+1}
\mathcal{O}^{\prime\prime}[z_{2s+1},z]zz^{\prime}B_{5}[z^{\prime},$ $x^{\prime}]x^{\prime}x$.
Then $P_{1}^{\prime\prime},\ldots,P_{r}^{\prime\prime},\tilde{P}_{1}^{\prime\prime},\ldots,
\tilde{P}_{s}^{\prime\prime}$ and $\tilde{P}_{s+1}$
are our desired ID$S$-paths in $D_{k,n}$.
\\
\indent
If $v^{\prime}\in V(D[1])$ and $z\in A\cup \{v^{\prime}\}$,
then either $z=v^{\prime}$ or $z\in A$.
If $z=v^{\prime}$,
then $P_{1}^{\prime\prime},\ldots,P_{r}^{\prime\prime},\tilde{P}_{1}^{\prime\prime},\ldots,
\tilde{P}_{s}^{\prime\prime}$ and $\tilde{P}_{s+1}$
are our desired ID$S$-paths in $D_{k,n}$,
where $\tilde{P}_{s+1}$ is shown in the case of $v^{\prime}=z$ in Case~$2.1.1$.
If $z\in A$, without loss of generality, assume that $z=z_{1r}$,
then regard the path $\mathcal{O}[z,z_{1r}]$ as the vertex $z$ in the path $P_{r}$.
Let $A^{\prime\prime\prime}=A^{\prime\prime}\backslash\{z_{1r}\}\cup \{v^{\prime}\}$.
By Fan Lemma \ref{lem4}, there is a fan $\mathcal{O}^{\prime\prime\prime}$ from $z$ to $A^{\prime\prime\prime}$ in $D[1]$.
Let $P_{1}^{\prime\prime\prime},\ldots,P_{r-1}^{\prime\prime\prime},\tilde{P}_{1}^{\prime\prime\prime},\ldots,
\tilde{P}_{s}^{\prime\prime\prime}$
be respectively obtained from
$P_{1},\ldots,P_{r-1},\tilde{P}_{1},\ldots,\tilde{P}_{s}$ shown in formulas $(4)$ and $(5)$ by
replacing the paths in the fan $\mathcal{O}$ with the
corresponding paths in the fan $\mathcal{O}^{\prime\prime\prime}$.
Let $\tilde{P}_{s+1}^{\prime}$ be obtained from $\tilde{P}_{s+1}$ which shown in the case of
$z\in A$ of Case~$2.1.1$ by replacing $\mathcal{O}^{\prime}[z,v^{\prime}]$
with $\mathcal{O}^{\prime\prime\prime}[z,v^{\prime}]$.
Then $P_{1}^{\prime\prime\prime},\ldots,P_{r-1}^{\prime\prime\prime},P_{r},
\tilde{P}_{1}^{\prime\prime\prime},\ldots,
\tilde{P}_{s}^{\prime\prime\prime}$ and $\tilde{P}_{s+1}^{\prime}$
are our desired ID$S$-paths in $D_{k,n}$.

\indent
$\mathbf {Case~2.3}$. $d_{\mathcal{T}}(u)+2\leq d_{D[0]}(u)$.

\indent
Let each one in $\{B_{3},B_{4},\mathcal{B}_{3}[c,a],\mathcal{B}_{4}[b,d]\}$ be the same as that in Case~$2.1$.
Recall that $|A|\leq n+k-4$, then add two new vertices $z_{2s+1}$ and $z_{2s+2}$ to $A$
and let $A^{\prime}=A\cup \{z_{2s+1},z_{2s+2}\}$,
where $\{z_{2s+1}^{\prime},z_{2s+2}^{\prime}\}\subseteq V(H)$.
By Fan Lemma \ref{lem4},
there is a fan $\mathcal{O}^{\prime}$ which
contains $r+2s+2$ internally disjoint paths from $z$ to $A^{\prime}$ in $D[1]$.
Replace $z_{2s+1}^{\prime}$ of $B_{3}$ with $x^{\prime}$ and
$x^{\prime}$ (resp.~$v^{\prime}$) of $B_{4}$ with $z_{2s+1}^{\prime}$ (resp.~$z_{2s+2}^{\prime}$) in $H$.
Let $\tilde{P}_{s+1}=yy^{\prime}\mathcal{B}_{4}[y^{\prime},z_{2s+p}^{\prime}]z_{2s+p}^{\prime}z_{2s+p}
\mathcal{O}^{\prime}[z_{2s+p},z]z\mathcal{O}^{\prime}[z,z_{2s+q}]$ $z_{2s+q}z_{2s+q}^{\prime}
\mathcal{B}_{3}[z_{2s+q}^{\prime}$
$,x^{\prime}]x^{\prime}x$
with $\{p,q\}=\{1,2\}$.
Then $P_{1}^{\prime},\ldots,P_{r}^{\prime},\tilde{P}_{1}^{\prime},\ldots,\tilde{P}_{s}^{\prime}$ and $\tilde{P}_{s+1}$
are our desired ID$S$-paths in $D_{k,n}$,
where $P_{1}^{\prime},\ldots,P_{r}^{\prime},\tilde{P}_{1}^{\prime},\ldots,\tilde{P}_{s}^{\prime}$ are
respectively obtained from
$P_{1},\ldots,P_{r},\tilde{P}_{1},\ldots,\tilde{P}_{s}$ shown in formulas $(4)$ and $(5)$
by replacing the paths in the fan $\mathcal{O}$ with the
 corresponding paths in the fan $\mathcal{O}^{\prime}$.
$\hfill\blacksquare$
\\
\indent
By Claims~$1$-~$3$, one has that $\pi_{3}(D_{k,n})\geq\lfloor\frac{2n+3k}{4}\rfloor$
for $k\geq 0$ and $n\geq 6$.
Therefore, $\pi_{3}(D_{k,n})=\lfloor\frac{2n+3k}{4}\rfloor$ for $k\geq 0$ and $n\geq 6$.
\end{proof}

\section{Conclusion}
The $r$-path connectivity is a generalization of the traditional connectivity.
In this paper, we prove that at least $\lfloor\frac{2n+3k}{4}\rfloor$ internally disjoint paths
connecting any three distinct vertices can be constructed in the
$k$-dimensional data center network with $n$-port switches $D_{k,n}$.
Furthermore, the result $\pi_{3}(D_{k,n})=\lfloor\frac{2n+3k}{4}\rfloor$ is proved.
Determine the
$r$-path connectivity with $r\geq 4$ of the data center network would be interesting in the future.

\section*{Acknowledgements}
This work was supported by
the National Natural Science Foundation of China (Nos. 11971054, 11731002 and 12161141005)
and the Fundamental Research Funds for the Central Universities (2022JBCG003).

\section*{Appendix}{The proof of Lemma \ref{thm21}}

We first introduce some definitions and notations that will be used in the proof of Lemma \ref{thm21}.
\\
\indent
For any vertex $u\in V(G)$, let $N_{e}(u)$ denote the set of edges which are incident to $u$ in $G$.
For any path $P$ in $G$ and any two vertices $u,v\in V(P)$, let $l(P)$ be the length of $P$
and $P[u,v]$ be the subpath of $P$ connecting $u$ and $v$.
Let $C=\{e_{1},e_{2},\ldots,e_{t}\}$ be the subset of $E(G)$,
then $V(C)=\bigcup_{i=1}^{t}V(e_{i})$.

\begin{proof}
Let $x,y$ and $z$ be any three distinct vertices of $G$ with $|N_{G}(x)\cap N_{G}(y)\cap N_{G}(z)|=r$,
$\mathcal{T}$ be a set of $\pi_{3}(G)$ internally disjoint $\{x,y,z\}$-paths in $G$ and $A=N_{G}(x)\cup N_{G}(y)\cup N_{G}(z)$
such that
\\
\indent
(1) Only one vertex of $x,y$ and $z$ has degree two in $P$ for any path $P\in \mathcal{T}$;
\\
\indent
(2) Subject to (1), $|E(\mathcal{T})\cap E(G[\{x,y,z\}])|$ is as large as possible;
\\
\indent
(3) Subject to (1) and (2), $|A\cap V(\mathcal{T})|$ is as small as possible.

$\mathbf {Claim~1}$. $E(G[\{x,y,z\}])\subseteq E(\mathcal{T})$.

\noindent {\it Proof of Claim 1.}
To contrary, suppose that there exists an edge
$e\in E(G[\{x,y,z\}])$ such that $e\not\in E(\mathcal{T})$.
Notice that $|E(G[\{x,y,z\}])|\leq 2$ and $|\mathcal{T}|\geq \pi_{3}(G)\geq 2$.
Then there exists a path $T\in \mathcal{T}$ with $l(T)\geq 3$.
Without loss of generality, assume that $e=xy$,
then $e\not\in E(T)$ as $e\not \in E(\mathcal{T})$.
By symmetry, we only consider $d_{T}(z)=2$ or $d_{T}(x)=2$.
If $d_{T}(z)=2$,
without loss of generality, assume that $l(T[x,z])\geq 2$ as $l(T)\geq 3$.
Let $T_{1}=xyT[y,z]z$ and $\mathcal{T}_{1}=(\mathcal{T}\backslash \{T\})\cup \{T_{1}\}$.
Then $|E(\mathcal{T}_{1})\cap E(G[\{x,y,z\}])|>|E(\mathcal{T})\cap E(G[\{x,y,z\}])|$ subject to (1),
which contradicts to (2).
If $d_{T}(x)=2$,
let $T_{2}=yxT[x,z]z$ and $\mathcal{T}_{2}=(\mathcal{T}\backslash \{T\})\cup \{T_{2}\}$.
Then $|E(\mathcal{T}_{2})\cap E(G[\{x,y,z\}])|>|E(\mathcal{T})\cap E(G[\{x,y,z\}])|$,
which contradicts to (2).
Thus $E(G[\{x,y,z\}])\subseteq E(\mathcal{T})$.$\hfill\blacksquare$
\\
\indent
If $3k-4\pi_{3}(G)\geq r+1$
(that is  $d_{G}(x)+d_{G}(y)+d_{G}(z)-4|\mathcal{T}|=3k-4\pi_{3}(G)\geq r+1$),
by Claim 1,
then there are $t$ edges, say $e_{1},\ldots, e_{t}$, such that each of them is incident to
only one vertex in $\{x,y,z\}$ and
$\{e_{1},\ldots, e_{t}\}=(N_{e}(x)\cup N_{e}(y)\cup N_{e}(z))\backslash E(\mathcal{T})$,
where $t=3k-4\pi_{3}(G)\geq r+1$.
Let $(\bigcup _{j=1}^{t} V(e_{j}))\backslash \{x,y,z\}=B$,
it implies that $1\leq |B|\leq t$.

$\mathbf {Claim~2}$. $B\not\subseteq V(\mathcal{T})$.

\noindent {\it Proof of Claim 2.}
To contrary, suppose that $B\subseteq V(\mathcal{T})$.
For any vertex $\mu\in B$, then $\mu\in V(T)$ with some $T\in \mathcal{T}$.
Without loss of generality, assume that $d_{T}(y)=2$.
Let $N_{T}(x)=\{x_{11}\},N_{T}(y)=\{y_{11},y_{12}\}$,
$N_{T}(z)=\{z_{11}\}$ and $d_{T}(x,y_{11})< d_{T}(x,y_{12})$.

$\mathbf {Case~1}$. $B\not\subseteq N_{G}(x)\cap N_{G}(y)\cap N_{G}(z)$.

Since $B\not\subseteq N_{G}(x)\cap N_{G}(y)\cap N_{G}(z)$,
one has that there exists a vertex, say $\mu_{1}$, in $B$, such that
$\mu_{1}\not\in N_{G}(x)\cap N_{G}(y)\cap N_{G}(z)$.
Moreover, $\mu_{1}\in A$ as $\mu_{1}\in B$ and $B\subseteq A$.
As $\mu_{1}\in A\cap B$,
then $\alpha \mu_{1}\in \{e_{1},\ldots,e_{t}\}$ with some $\alpha \in \{x,y,z\}$.
Recall that $\mu_{1}\in V(T)$,
we consider the following cases.

$\mathbf {Case~1.1}$. $\alpha=y$.

As $yy_{11},yy_{12}\in E(T)$,
one has that $\mu_{1} \not\in \{y_{11},y_{12}\}$.
Since $\mu_{1}\in V(T)$,
without loss of generality, assume that $\mu_{1} \in V(T[x,y])$.
Let $P=xT[x,\mu_{1}]\mu_{1} yT[y,z]z$ (see Figure \ref{Fig.a}~(1)),
and $\mathcal{P}=(\mathcal{T}\backslash \{T\})\cup \{P\}$.
Then $V(\mathcal{P})\subseteq V(\mathcal{T})$,
$y_{11}\in N_{G}(y)\backslash V(\mathcal{P})$
and $\mathcal{P}$ satisfies (1) and (2).
Notice that $y_{11}\in V(\mathcal{T})$.
Then $|A\cap V(\mathcal{P})|< |A\cap V(\mathcal{T})|$,
which contradicts to (3).

$\mathbf {Case~1.2}$. $\alpha=x$ or $\alpha=z$.

Without loss of generality, assume that $\alpha=x$.

$\mathbf {Case~1.2.1}$. $\mu_{1}\neq y_{12}$.

If $\mu_{1} \in V(T[x,y])$,
as $xx_{11}\in E(T)$, then $\mu_{1}\neq x_{11}$.
Let $P_{1}=x\mu_{1} T[\mu_{1},z]z$ (see Figure \ref{Fig.a}~(2)),
and $\mathcal{P}=(\mathcal{T}\backslash \{T\})\cup \{P_{1}\}$.
Then $V(\mathcal{P})\subseteq V(\mathcal{T})$
and $x_{11}\in N_{G}(x)\backslash V(\mathcal{P})$,
it implies that $|A\cap V(\mathcal{P})|< |A\cap V(\mathcal{T})|$ subject to (1) and (2),
which contradicts to (3).
\\
\indent
If $\mu_{1} \not\in V(T[x,y])$, as $\mu_{1}\in V(T)$,
then $\mu_{1}\in V(T[y,z])$.
Let $P_{2}=yT[y,x]x\mu_{1} T[\mu_{1},z]z$ (see Figure \ref{Fig.a}~(3)),
and $\mathcal{P}=(\mathcal{T}\backslash \{T\})\cup \{P_{2}\}$.
Then $V(\mathcal{P})\subseteq V(\mathcal{T})$.
Recall that $\mu_{1}\neq y_{12}$,
then $y_{12}\in N_{G}(y)\backslash V(\mathcal{P})$,
it implies that $|A\cap V(\mathcal{P})|< |A\cap V(\mathcal{T})|$ subject to (1) and (2),
which contradicts to (3).

$\mathbf {Case~1.2.2}$. $\mu_{1}=y_{12}$.

As $\mu_{1}\not\in N_{G}(x)\cap N_{G}(y)\cap N_{G}(z)$ and $\mu_{1}\in N_{G}(x)\cap N_{G}(y)$,
then $\mu_{1}\neq z_{11}$.
\\
\indent
Suppose that there exists a path in $\mathcal{T}$ with $z$ as the internal vertex,
without loss of generality, let $T_{1}\in \mathcal{T}$ with $d_{T_{1}}(z)=2$.
Let $P_{3}=yT[y,x]xT_{1}[x,z]z,P_{4}=x\mu_{1} yT_{1}[y,z]z$ (see Figure \ref{Fig.a}~(4)),
and $\mathcal{P}=(\mathcal{T}\backslash \{T,T_{1}\})\cup \{P_{3},P_{4}\}$.
Then $V(\mathcal{P})\subseteq V(\mathcal{T})$
and $z_{11}\in N_{G}(z)\backslash V(\mathcal{P})$,
it implies that $|A\cap V(\mathcal{P})|< |A\cap V(\mathcal{T})|$ subject to (1) and (2),
which contradicts to (3).
\\
\indent
Suppose that there is no path in $\mathcal{T}$ with $z$ as the internal vertex.
Since $|\mathcal{T}|\geq \pi_{3}(G)\geq 2$,
one has that there exists a path $T_{2}\in \mathcal{T}\backslash \{T\}$
such that $d_{T_{2}}(\beta)=2$ with some $\beta\in\{x,y\}$.
Since $d_{G}(x)=d_{G}(z)$ and $d_{\mathcal{T}}(z)\leq d_{\mathcal{T}}(x)$,
one has that $d_{G}(z)-d_{\mathcal{T}}(z)\geq d_{G}(x)-d_{\mathcal{T}}(x)$,
that is $|N_{e}(z)\backslash E(\mathcal{T})|\geq |N_{e}(x)\backslash E(\mathcal{T})|$.
As $x\mu_{1}\in N_{e}(x)\backslash E(\mathcal{T})$,
then there exists at least one vertex, say $\nu\in B$,
such that $z\nu\in N_{e}(z)\backslash E(\mathcal{T})$.
Notice that $\nu\in B$ and $B\subseteq V(\mathcal{T})$.
Then $\nu\in V(\mathcal{T})$.
\\
\indent
If $\nu\in V(T)$, then either $\nu\in V(T[x,y])$ or $\nu\in V(T[y,z])$.
If $\nu \in V(T[x,y])$, let $P_{5}=z\nu T[\nu,x]x\mu_{1} y$ (see Figure \ref{Fig.a}~(5)),
and
$\mathcal{P}=(\mathcal{T}\backslash \{T\})\cup \{P_{5}\}$.
Then $V(\mathcal{P})\subseteq V(\mathcal{T})$
and $z_{11}\in N_{G}(z)\backslash V(\mathcal{P})$,
it implies that $|A\cap V(\mathcal{P})|< |A\cap V(\mathcal{T})|$ subject to (1) and (2),
which contradicts to (3).
If $\nu\in V(T[y,z])$, we can obtain a contradiction by the similar discussions
as the proof for $\mu_{1}\in V(T[x,y])$ in Case 1.2.1.
\\
\indent
If $\nu\not\in V(T)$, as $\nu\in V(\mathcal{T})$, then $\nu \in V(T_{2})$.
If $\nu\in V(T_{2}[\beta,z])$ with some $\beta\in \{x,y\}$, we can obtain a contradiction by the similar discussions
as the proof for $\mu_{1}\in V(T[x,y])$ in Case 1.2.1.
If
$\nu \in V(T_{2}[x,y])$,
let $P_{6}=z\nu T_{2}[\nu,y]y\mu_{1}x$.
If $\beta=y$, let
$P_{7}=xT[x,y]yT_{2}[y,z]z$ (see Figure \ref{Fig.a}~(6));
If $\beta=x$,
let $P_{7}=yT[y,x]xT_{2}[x,z]z$ (see Figure \ref{Fig.a}~(7)).
Let $\mathcal{P}=(\mathcal{T}\backslash \{T,T_{2}\})\cup \{P_{6},P_{7}\}$.
Then $V(\mathcal{P})\subseteq V(\mathcal{T})$ and $z_{11}\in N_{G}(z)\backslash V(\mathcal{P})$.
It implies that $|A\cap V(\mathcal{P})|< |A\cap V(\mathcal{T})|$ subject to (1) and (2),
which contradicts to (3).

$\mathbf {Case~2}$. $B\subseteq
N_{G}(x)\cap N_{G}(y)\cap N_{G}(z)$.

As $|N_{G}(x)\cap N_{G}(y)\cap N_{G}(z)|=r$,
thus $|B|\leq r$.
That is $|B|\leq t-1=|\{e_{1},\ldots,e_{t}\}|-1$ as $t\geq r+1$.
Then there exists at least one vertex, say $\mu_{1}$, in $B$, incident to at least two edges in $\{e_{1},\ldots,e_{t}\}$.
Without loss of generality, assume that $V(e_{1})\cap V(e_{2})=\{\mu_{1}\}$.
Recall that $\mu_{1}\in V(T)$,
without loss of generality, we only consider $\beta\mu_{1}=e_{i}$ with some $\beta \in \{x,y\}$ and $i\in \{1,2\}$.
If $\beta=y$ or $\beta=x$ and $\mu_{1}\neq y_{12}$,
then the proof is similar to that of Case 1.1 or Case 1.2.1, which is omitted.
If $\beta=x$ and $\mu_{1}=y_{12}$,
recall that $V(e_{1})\cap V(e_{2})=\{\mu_{1}\}$,
then $z\mu_{1}\in \{e_{1},e_{2}\}\backslash \{e_{i}\}$ and $\mu_{1}\neq z_{11}$ as $G$ is a simple graph.
Let $P=z\mu_{1}xT[x,y]y$ (see Figure \ref{Fig.a}~(8)),
and $\mathcal{P}=(\mathcal{T}\backslash \{T\})\cup \{P\}$.
Then $V(\mathcal{P})\subseteq V(\mathcal{T})$
and $z_{11}\in N_{G}(z)\backslash V(\mathcal{P})$.
It implies that $|A\cap V(\mathcal{P})|< |A\cap V(\mathcal{T})|$ subject to (1) and (2),
which contradicts to (3).
Thus Claim 2 holds.
$\hfill\blacksquare$

\begin{figure}[ht]
\begin{center}
\scalebox{0.7}[0.7]{\includegraphics{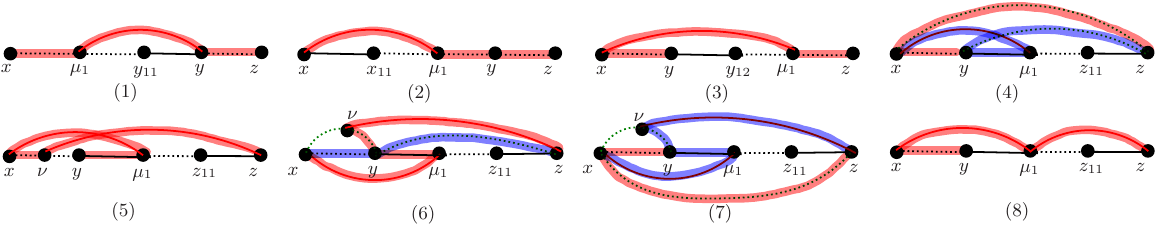}}
\caption{Some graphs of Claim 2}\label{Fig.a}
\end{center}
\end{figure}

(a) By Claim 2, $B\not\subseteq V(\mathcal{T})$ with $|B|\geq 1$,
then there exists a vertex $u\in B\backslash V(\mathcal{T})$.
Since $B\subseteq N_{G}(x)\cup N_{G}(y)\cup N_{G}(z)$,
one has that $u\in (N_{G}(x)\cup N_{G}(y)\cup N_{G}(z))\backslash V(\mathcal{T})$,
thus (a) holds.

Before prove (b), we firstly give a note here.

$\mathbf {Note}$.
Let $C=(N_{e}(x)\cup N_{e}(y)\cup N_{e}(z))\backslash E(\mathcal{T})$ and $D=V(C)\backslash \{x,y,z\}$,
or
$C=(N_{e}(x)\cup N_{e}(y)\cup N_{e}(z))\backslash (E(\mathcal{T})\cup E_{0})$ and $D=V(C)\backslash \{x,y,z\}$
with $|C|\geq 1$ and $|D|\geq 1$,
where $u\in (N_{G}(x)\cup N_{G}(y)\cup N_{G}(z))\backslash V(\mathcal{T})$ and
$E_{0}$ is the set of edges with one end in $\{u\}$
and the other end in $\{x,y,z\}$.
\\
\indent
$\mathbf {1}$. If $D\not\subseteq N_{G}(x)\cap N_{G}(y)\cap N_{G}(z)$,
the set $B$ in Case 1 of Claim 2 is replacing with $D$,
by a similar proof of that for Case 1,
then $D\not\subseteq V(\mathcal{T})$.
\\
\indent
$\mathbf {2}$. If $D\subseteq N_{G}(x)\cap N_{G}(y)\cap N_{G}(z)$
and $|D|\leq |C|-1$,
the sets $B$ and $\{e_{1},\ldots, e_{t}\}$ in Case 2 of Claim 2 are replacing with $D$ and $C$, respectively,
by a similar proof of that for Case 2,
then $D\not\subseteq V(\mathcal{T})$.

We are going to prove the result of (b).
When $\ell=3$, notice that $3k-4\pi_{3}(G)\geq r+3\geq r+1$ and by (a),
 there exist a vertex $u$ and a set $\mathcal{T}$
of $\pi_{3}(G)$ internally disjoint $\{x,y,z\}$-paths in $G$ such that
$u\in (N_{G}(x)\cup N_{G}(y)\cup N_{G}(z))\backslash V(\mathcal{T})$,
where $\mathcal{T}$ satisfies (1)-(3).
\\
\indent
Let $E_{0}=E(\{u\},\{x,y,z\})$, where $E(\{u\},\{x,y,z\})$ denote the set of edges with one end in $\{u\}$
and the other end in $\{x,y,z\}$, respectively.
Then $1\leq |E_{0}|\leq 3$.
Let $d_{G}(x)+d_{G}(y)+d_{G}(z)-4|\mathcal{T}|-|E_{0}|=g$.
As $3k-4\pi_{3}(G)\geq r+3$,
one has that $g=3k-4\pi_{3}(G)-|E_{0}|\geq r+3-|E_{0}|$,
where $g\geq r$ if $|E_{0}|=3$, and $g\geq r+1$ if $1\leq |E_{0}|\leq 2$.
By Claim 1, there are $g$ edges, say $e_{1},\ldots,e_{g}$,
each of them is incident to only one vertex of $\{x,y,z\}$ and
$\{e_{1},\ldots,e_{g}\}=(N_{e}(x)\cup N_{e}(y)\cup N_{e}(z))\backslash (E(\mathcal{T})\cup E_{0})$.
Let $C=\{e_{1},\ldots,e_{g}\}$ and $D=V(C)\backslash \{x,y,z\}$.
Then $1\leq |D|\leq g$.
As $C\cap E_{0}=\emptyset$,
then $u\not\in D$.
\\
\indent
Suppose that $D\not\subseteq N_{G}(x)\cap N_{G}(y)\cap N_{G}(z)$,
by Note 1, then $D\not\subseteq V(\mathcal{T})$.
Suppose that $D\subseteq N_{G}(x)\cap N_{G}(y)\cap N_{G}(z)$.
If $u\in N_{G}(x)\cap N_{G}(y)\cap N_{G}(z)$,
then $|E_{0}|=3$ and $|C|=g=r$.
Notice that $u\not\in D$. Then $|D|\leq r-1$.
It implies that $|D|\leq |C|-1$;
If $u\not\in N_{G}(x)\cap N_{G}(y)\cap N_{G}(z)$,
then $1\leq |E_{0}|\leq 2$ and $|C|=g\geq r+1$.
As $D\subseteq N_{G}(x)\cap N_{G}(y)\cap N_{G}(z)$,
one has that $|D|\leq r$.
Thus $|D|\leq |C|-1$.
By Note 2, one has that $D\not\subseteq V(\mathcal{T})$.
Then there exists a vertex, say $v$, in $D$, such that
$v\in D\backslash V(\mathcal{T})$.
By the definition of $D$, we have that $D\subseteq N_{G}(x)\cup N_{G}(y)\cup
N_{G}(z)$.
Then
$v\in (N_{G}(x)\cup N_{G}(y)\cup
N_{G}(z))\backslash V(\mathcal{T})$.
Recall that $u\not\in D$ and $u\in (N_{G}(x)\cup N_{G}(y)\cup N_{G}(z))\backslash V(\mathcal{T})$,
then (b) holds.
\end{proof}

{\bf Note:} Lemma \ref{thm21}~(2) can be applied to many networks such as
the $k$-ary $n$-cube $Q_{n}^{k}$.

%
%

\def\polhk#1{\setbox0=\hbox{#1}{\ooalign{\hidewidth
\lower1.5ex\hbox{`}\hidewidth\crcr\unhbox0}}}


\end{document}